\documentclass[12pt]{article}

\setlength{\oddsidemargin}{0.1cm}

\setlength{\evensidemargin}{-0.2cm}

\setlength{\textwidth}{16cm}

\setlength{\topmargin}{0cm}

\setlength{\headheight}{0in}

\setlength{\headsep}{0.3in}

\setlength{\textheight}{22cm}

\setlength{\footskip}{0.6in}

\usepackage{graphicx}

\usepackage{epstopdf}

\usepackage{amsmath}
\usepackage{amsfonts}
\usepackage{amssymb}
\usepackage{amsfonts}
\usepackage{amsthm}
\usepackage{verbatim}
\usepackage{array}
\usepackage{xfrac}
\usepackage{mathtools}
\usepackage{hyperref}
\usepackage{xcolor}
\usepackage{mathrsfs}
\usepackage{mathtools}
\usepackage[scr=boondox,  
            cal=esstix]   
           {mathalpha}
\usepackage{xfrac}  
\usepackage{dsfont}
\usepackage{cite}

\numberwithin{equation}{section}

\theoremstyle{plain}
\newtheorem{theorem}{Theorem}[section]
\newtheorem{lemma}[theorem]{Lemma}

\newtheorem{cor}[theorem]{Corollary}
\theoremstyle{definition}

\newtheorem{remark}[theorem]{Remark}

\newcommand{\bigslant}[2]{{\raisebox{.1em}{$#1$}\left/\raisebox{-.1em}{$#2$}\right.}}

\newcommand{\torus}{{\mathbb T}}
\newcommand{\per}{\mathcal T}
\newcommand{\R}{{\mathbb R}}
\newcommand{\N}{{\mathbb N}}
\newcommand{\Z}{{\mathbb Z}}
\newcommand{\dd}{\mathrm{d}}
\newcommand{\loc}{\mathrm{loc}}

\DeclareMathOperator{\supp}{supp}
\DeclareMathOperator*{\esssup}{ess\,sup}

\newcommand{\np}[1]{(#1)}
\newcommand{\nb}[1]{[#1]}
\newcommand{\bp}[1]{\big(#1\big)}
\newcommand{\bb}[1]{\big[#1\big]}
\newcommand{\Bp}[1]{\bigg(#1\bigg)}

\newcommand{\norm}[1]{\lVert #1 \rVert}
\newcommand{\norml}[1]{\big\lVert #1 \big\rVert}
\newcommand{\snorm}[1]{\lvert #1 \rvert}
\newcommand{\snorml}[1]{\big\lvert #1 \big\rvert}
\newcommand{\snormL}[1]{\bigg\lvert #1 \bigg\rvert}

\newcommand{\set}[1]{\{ #1 \}}
\newcommand{\setl}[1]{\big\{ #1 \big \}}

\newcommand{\setc}[2]{\{ #1 \,\vert\,#2 \}}
\newcommand{\setcl}[2]{\big\{ #1 \,\big\vert\, #2 \big\}}

\newcommand{\calF}{\mathcal F}

\newcommand{\calR}{\mathcal R}

\newcommand{\vvel}{v}
\newcommand{\vpres}{p}
\newcommand{\vvels}{\vvel_0}
\newcommand{\vvelp}{\vvel_\perp}
\newcommand{\vpress}{p_0}

\newcommand{\uvel}{u}
\newcommand{\upres}{\mathfrak p}
\newcommand{\uvels}{\uvel_0}
\newcommand{\uvelp}{\uvel_\perp}

\newcommand{\wvel}{w}
\newcommand{\wpres}{\mathfrak q}
\newcommand{\wvels}{\wvel_0}
\newcommand{\wvelp}{\wvel_\perp}

\newcommand{\proj}{{\mathcal P}}
\newcommand{\projcompl}{{\mathcal P_\bot}}

\newcommand{\wakefct}{\mathscr{s}_\zeta}
\newcommand\BbbGamma{\reflectbox{\rotatebox[origin=c]{180}{$\mathds L$}}}
\newcommand{\fsolvel}{\BbbGamma_\zeta}
\newcommand{\fsolvelss}{\BbbGamma_\zeta^{\mathrm{ste}}}
\newcommand{\fsolvelpp}{\BbbGamma_\zeta^{\mathrm{per}}}
\newcommand{\fsolpres}{{\mathrm P}}
\newcommand{\TT}{{\mathds T}}
\newcommand{\DD}{{\mathds D}}
\newcommand{\II}{{\mathds I}}

\newcommand{\CS}[1]{C^{#1}}

\newcommand{\CSci}{C_0^{\infty}}
\newcommand{\LS}[1]{L^{#1}}
\newcommand{\LSloc}[1]{L^{#1}_{\mathrm{loc}}}
\newcommand{\WS}[2]{W^{#1,#2}}


\title{Representation formulas and far-field behavior
of time-periodic incompressible viscous flow around a translating rigid body}

\author{Thomas Eiter%
\footnote{%
Institute of Mathematics, University of Kassel, Heinrich-Plett Str.~40, 34132 Kassel, Germany}
\footnote{%
Weierstrass Institute for Applied Analysis and Stochastics,
Mohrenstra\ss{}e 39, 10117 Berlin, Germany.
\\
Email: {\texttt{thomas.eiter@wias-berlin.de}}}%
\and
Ana Leonor Silvestre%
\footnote{%
CEMAT and Department of Mathematics, Instituto Superior T\'ecnico, Universidade de
Lisboa, Av. Rovisco Pais 1, 1049-001 Lisboa, Portugal.
\\
Email: {\texttt{ana.silvestre@math.tecnico.ulisboa.pt}}}%
}

\begin{document}
\maketitle

\begin{abstract}
This paper is concerned with integral representations and asymptotic expansions of solutions to the time-periodic incompressible Navier-Stokes equations for fluid flow in the exterior of a rigid body that moves with constant velocity. Using the time-periodic Oseen fundamental solution, we derive representation formulas for solutions with suitable regularity. From these formulas, the decomposition of the velocity component of the fundamental solution into steady-state and purely periodic parts and their detailed decay rate in space, we deduce complete information on the asymptotic structure of the velocity and pressure fields. 
\end{abstract}

\noindent
\textbf{MSC2020:} 35Q30, 76D05, 76D07, 35B10, 35A08, 35C15, 35C20
\\
\noindent
\textbf{Keywords:}   Time-periodic Navier-Stokes flows, exterior domains, time-periodic Oseen fundamental solution, integral representation, asymptotic expansion.

\tableofcontents

\section{Introduction}

We are interested in time-periodic incompressible viscous flow around a translating rigid body ${\mathcal O}$. In a reference frame attached to the solid, the velocity $v=v(t,x)$ and pressure $p=p(t,x)$ of the fluid satisfy 
\begin{equation}
\left\{
\begin{aligned}
\displaystyle \partial_t v + v \cdot \nabla v - \nu \Delta v +  \nabla p - \zeta \cdot \nabla v & = f && \textrm{in  } {\mathbb T} \times \Omega,  \\
\displaystyle \nabla \cdot v & = 0 &&  \textrm{in  } {\mathbb T} \times \Omega, \\
\displaystyle  v & = v_b && \textrm{on  }  {\mathbb T} \times \Sigma,  \\
\displaystyle \lim_{|x|\rightarrow \infty} v(t,x) & =0 &&   \textrm{for  } t \in {\mathbb T}, 
\end{aligned}
\right.
\label{periodicproblem}
\end{equation}
where $\Omega \subset {\mathbb R}^3$ is an exterior domain and ${\mathbb T} := \bigslant{{\mathbb R}}{{\mathcal T} {\mathbb Z}}$ is the torus group related to the period ${\mathcal T} >0$ of the motion. The fluid has constant viscosity $\nu >0$ and  is subject to a ${\mathcal T}$-periodic external force $f$. 
The body ${\mathcal O}$ undergoes a translational motion with constant velocity $\zeta \in {\mathbb R}^3\setminus \{0\}$,
and the fluid motion on its surface $\Sigma:=\partial \Omega$ is prescribed via 
${\mathcal T}$-periodic boundary values $v_b$,
which may describe a periodic boundary motion or flux through the boundary. 

The stationary counterpart of \eqref{periodicproblem} is a classical mathematical problem with well-known properties, which have been studied, for example, in \cite{Finn59,RF63,Finn,FINN73,G,Babenko,Farwig_habil,Farwig_statextOseenNSE_92}. In particular, the velocity field exhibits different asymptotic behavior inside and outside a wake region created by the translational motion of $\mathcal O$. This behavior is inherited from the steady Oseen fundamental solution. From the point of view of applications, a precise description of the far-field behavior of the flow can be very useful in numerical studies, for which a bounded computational domain is defined, and consequently, appropriate boundary conditions must be imposed on the outer boundary of the truncated domain. Such boundary conditions should be consistent with the asymptotic decay of solutions, as was shown in \cite{Heuv,DK2011}.

The study of the time-periodic exterior problem \eqref{periodicproblem} is more recent, see \cite{Maremonti91a,Maremonti91b,MaremontiPadula,KozonoNakao1996,GS06,Eiter21, EiterKyed18,EiterKyed17,GaldiKyed18,EiterGaldi21,EiterKyed21} for instance. In this paper, we exploit the mathematical properties of time-periodic Oseen fundamental solution, based on the results of \cite{Ky16,EiterKyed17,EiterKyed18}, wherein a modern mathematical setting was developed for the time-periodic Stokes and Oseen problems and extended to the exterior Navier-Stokes equations. The first step of our analysis is the integral representation obtained in Theorem \ref{thm:repr.nonlin} for $(v,p)$, where we followed \cite{anaJMFM}. Then, employing the decomposition of the velocity component of the fundamental solution into steady-state and purely periodic contributions, we deduce complete asymptotic expansions for the steady and purely periodic parts of the velocity and the pressure fields, the main results being stated in Theorem \ref{thm:asymp.nonlin}.

Comparing our results with the cases $\Omega = {\mathbb R}^3$, see  \cite{Eiter21,EiterKyed17,EiterKyed18}, or with the simpler case $v_b = \zeta$, addressed for example in \cite{EiterGaldi20}, we found that the 
time-periodic boundary data $\vvel_b$
significantly influence the far-field behavior 
of the velocity and of the pressure of the flow. 
More precisely, in Theorem \ref{thm:constflux} we show that, if the total flux through the boundary is time-independent, the purely periodic part of the velocity as well as the pressure show a faster decay rate, the same as for the whole space problem and the corresponding fundamental solutions. 
However, if the flux is time-dependent, 
both quantities decay at a slower rate.

\emph{Structure of the paper.} Section~\ref{sec:notationsfsol} contains the notations for functions spaces, differential operators and distributions that are relevant for the mathematical formulation and analysis of the problem. We also recall the steady and time-periodic Oseen fundamental solutions and some results for their convolution with suitably decaying functions,
and we prepare a general result on estimates for certain convolution integrals. 
In Section~\ref{sec:mainresults}, we present and discuss the main results of the paper. The linearized time-periodic problem is addressed in Section~\ref{sec:linearproblem}. Firstly, we deduce integral representation formulas for the velocity and pressure of the linear problem using the time-periodic Oseen fundamental solution. Subsequently, a detailed analysis of the far-field behavior of solutions is carried out, exploiting the decomposition of the velocity fields into a time-independent part and a time-dependent part with mean value zero. The proofs of our main results are given in Section~\ref{sec:proofs}.

\section{Notations and fundamental solutions}
\label{sec:notationsfsol}

\subsection{Notations}

 The standard basis of ${\mathbb R}^3$ will be denoted by $\{\mathsf{e}_1,\mathsf{e}_2,\mathsf{e}_3\}$. For $x \in {\mathbb R}^3\setminus \{0\}$, we define the unit vector $\hat{x} = x / |x|$, where $\snorm{\cdot}$ denotes the Euclidean norm. 
For $R>0$ 
we set $B_R(x)\coloneqq\setc{y\in\R^3}{\snorm{x-y}<R}$
and $B_R\coloneqq B_R(0)$.
The symbol $\Omega$ always denotes an exterior domain in $\R^3$, that is a domain that is the complement of a compact set in $\R^3$.
The unit outer normal at $\Sigma=\partial\Omega$ is denoted by $n=n(x)$.

The gradient of a vector field $v$ is defined by $(\nabla v)_{ij}=\partial_iv_j=\frac{\partial v_j}{\partial x_i},$ $i,j=1,2,3$, and $\nabla \cdot v=\frac{\partial v_i}{\partial x_i}$ denotes its divergence. 
Here and in what follows, Einstein summation convention is used.
We adopt this notation for the divergence of a tensor field $\mathds F=(\mathds F_{ij})$ and define 
$\nabla \cdot {\mathds F} = \frac{\partial {\mathds F}_{ji}}{\partial x_j}  \mathsf{e}_i$. 
The notation $\TT(v,q)$ is used for the stress tensor $\TT(v,q):= 2 \nu \DD(v)-p \II$, where $\DD(v) = \frac 12 \left( \nabla v + (\nabla v)^\top \right)$ and $\II \in {\mathbb R}^{3 \times 3}$ is the identity matrix.  
 
For $q \in [1,\infty]$ and $m \in {\mathbb N}$, 
$L^q({\mathcal D})$ and $W^{m,q}({\mathcal D})$ denote classical Lebesgue and Sobolev spaces on a domain ${\mathcal D} \subseteq {\mathbb R}^n$,
with norms $\| \cdot \|_{q,{\mathcal D}}$  and $\| \cdot \|_{m,q,{\mathcal D}}$, respectively. If $q \in [1,\infty)$, by $W^{m-\frac{1}{q},q}(\partial{\mathcal D})$ we indicate the trace space for $W^{m,q}(\mathcal D)$-functions on the (sufficiently
smooth) boundary $\partial{\mathcal D}$ of ${\mathcal D}$, equipped with the norm $\| . \|_{m-\frac{1}{q},q,\partial{\mathcal D}}.$ When ${\mathcal D}$ is an exterior domain, it is convenient to consider the homogeneous Sobolev spaces defined by
\[
D^{m,q}({\mathcal D})
:=\{u\in L^1_\loc({\mathcal D})  ;\, D^\alpha u\in L^q({\mathcal D})\;
\mbox{for any multi-index $\alpha$ with $|\alpha|=m$}\}
\]
and equipped with seminorm $|u|_{m,q,{\mathcal D}}=\left( \sum_{|\alpha|=m}\|D^\alpha u\|^q_{q,{\mathcal D}} \right)^{1/q}$, where  $q \in [1,\infty)$. 
The space $D^{m,q}_0({\mathcal D})$ is the closure of $C^\infty_0({\mathcal D})$ with respect to the norm $| \cdot |_{m,q,{\mathcal D}}$. The dual space $(D^{m,q}_0)'({\mathcal D})$ will be denoted by $D^{-m,q'}({\mathcal D})$, where $q':=q/(q-1)$.

If $X$ is a Banach space associated with the space variable, $L^r(\mathbb{T};X)$ denotes the space of all Bochner-measurable functions $u: {\mathbb T} \to X$ such that $\| u\|_{L^r({\mathbb T} ;X)}:=\left(\int_{{\mathbb T} } \|u(t)\|_X^r\ dt \right)^{\frac 1r} = \left( \frac{1}{\mathcal T} \int_{0}^{\mathcal T} \|u(t)\|_X^r\ dt \right)^{\frac 1r} < \infty,$ for $ r \in [1,\infty)$, and $\| u\|_{L^\infty({\mathbb T};X)}:=\esssup_{t \in {\mathbb T} }\|u(t)\|_X < \infty,$ for $r = \infty$. 
Note that we equip $\torus$ with the normalized Lebesgue measure. 
For $r,q\in(1,\infty)$
we further denote the space of boundary traces of functions in $\WS{1}{r}(\torus, \LS{q}(\mathcal D)^3) \cap 
\LS{r}(\torus, \WS{2}{q}(\mathcal D)^3)$
by
\begin{equation}
\label{eq:tracespace}
T_{r,q}(\torus\times\partial\mathcal D)
:=\setcl{ \vvel_b=\vvel|_{\torus\times\partial\mathcal D}}{\vvel\in\WS{1}{r}(\torus, \LS{q}(\mathcal D)^3) \cap \LS{r}(\torus, \WS{2}{q}(\mathcal D)^3)}.
\end{equation}
The space $T_{r,q}(\torus\times\partial\mathcal D)$
can be identified with a real interpolation space,
but this property will not be used in what follows.
Moreover, by $C({\mathbb T} ;X)$ we denote the space of continuous functions from ${\mathbb T}$ to $X$. 

As in \cite{Ky16,EiterKyed17,EiterKyed18}, we will denote the Dirac delta distributions on ${\mathbb R}^3$, $\mathbb T$, and $\mathbb Z$ by $\delta_{{\mathbb R}^3}$, $\delta_{{\mathbb T}}$ and $\delta_{{\mathbb Z}}$, respectively. When studying the whole space problem, it will be formulated in the locally compact abelian group $G := {\mathbb T} \times  {\mathbb R}^3$, and the Dirac delta distribution on $G$, $\delta_G$, will be used to define the fundamental solution to time-periodic problems. 
By $\mathcal S'(\R^3)$, $\mathcal S'(G)$ and $\mathcal S'(\hat G)$, where $\hat G:=\Z\times\R^3$,
we denote the spaces of tempered distributions on $\R^3$, $G$ and $\hat G$,
see~\cite{EiterKyed17} for a precise definition.
Moreover, $1_\torus$ denotes the constant $1$ function on $\torus$.
In the context of the exterior problem, $\delta_\Sigma$ will denote the Dirac delta distribution on $\Sigma$, as defined, for example, in \cite{ergorov}. 

Additional notations will be introduced as they are needed.

\subsection{Steady-state Stokes and Oseen fundamental solutions}

It is well known that
\begin{equation} {\mathrm E}(x)=  \frac{1}{4\pi |x|} \label{la-funda}
\end{equation}
is the fundamental solution of the Laplace operator in $\R^3$, that is, $- \Delta {\mathrm E} = \delta_{{\mathbb R}^3}$ in ${\mathcal S}'({\mathbb R}^3)$.

The fundamental solution of the classical Stokes system in $\R^3$ is the pair $(\BbbGamma_{0}^{\mathrm{ste}}, {\mathrm P}) \in {\mathcal S}'({\mathbb R}^3)^{3 \times 3} \times {\mathcal S}'({\mathbb R}^3)^{3}$ given by (see, for example, \cite{G})
$$
(\BbbGamma_{0}^{\mathrm{ste}},  {\mathrm P})(x) = \left(\frac{1}{8\pi\nu |x|}\left( \II + \hat{x} \otimes \hat{x} \right),\frac{1}{4\pi |x|^2}\hat{x} \right).
$$
In particular, the pressure component satisfies
\begin{equation}
 {\mathrm P}(x) = - \nabla {\mathrm E}(x),
 \label{PE}
\end{equation}
and one directly 
deduces the estimate
\begin{equation}
\forall\alpha\in\N_0^3 \quad \exists C>0 \quad  \forall x\neq 0 :
\quad
\snorml{D^\alpha_x  {\mathrm P}(x)}
\leq C\snorm{x}^{-2-\snorm{\alpha}}.
\label{eq:decay.fsolpres}
\end{equation}

The fundamental solution 
$(\BbbGamma_{\zeta}^{\mathrm{ste}}, {\mathrm P}) \in {\mathcal S}'({\mathbb R}^3)^{3 \times 3} \times {\mathcal S}'({\mathbb R}^3)^{3}$
of the 3D Oseen system has the same pressure part \eqref{PE}, 
and the velocity component is given by (see \cite{G,Farwig_statextOseenNSE_92,anaJMFM})
\begin{equation}
\begin{aligned}
 \BbbGamma_{\zeta}^{\mathrm{ste}}(x)  =  & 
 \frac{1}{4\pi \nu|x|}\exp\left(-\frac{\wakefct(x)}{\nu}\right) \II 
 -  \frac{1-\exp(-\wakefct(x)/\nu)}{8\pi |x| \wakefct(x)}  \left( \II  - \hat{x} \otimes \hat{x} \right)  \medskip \\
&  +  \frac{|\zeta|}{16 \pi}   \frac{1-\exp(-\wakefct(x)/\nu)-\exp(-\wakefct(x)/\nu)\wakefct(x)/\nu}{\wakefct(x)^2}   \left(  \hat{x} + \hat{\zeta}  \right) \otimes \left( \hat{x} + \hat{\zeta} \right),
 \end{aligned}
\label{ose}
\end{equation}
where $\wakefct(x):= \left[|\zeta| |x| + (\zeta\cdot x )\right] / 2.$ 
It is subject to the pointwise estimate (see \cite{Farwig_statextOseenNSE_92})
\begin{equation}
\forall\alpha\in\N_0^3 \quad \forall \varepsilon >0 \quad \exists C>0 \quad  \forall \snorm{x}\geq \varepsilon :
\quad
\snorml{D^\alpha_x \fsolvelss(x)}
\leq C\bb{\snorm{x}\bp{1+\wakefct(x)}}^{-1-\snorm{\alpha}/2}.
\label{eq:decay.fsolvelss}
\end{equation}
Observe that this estimate is anisotropic in space.
In particular, the decay rate depends on the direction $\zeta$,
and reflects the occurrence of a wake region in the flow behind the body. Hence, there is a significant difference between the velocity components of the Stokes and Oseen fundamental solutions.

The following lemma yields estimates on convolutions of $\fsolvelss$
with functions with suitable decay. Throughout this work we will use the notation \begin{equation}
\log_+(r)\coloneqq \max\set{1,\log r} \text{ for } r>0.
\label{log+}
\end{equation}

\begin{lemma}\label{lem:conv.fsolvelss}
Let $g\in\LS{\infty}(\R^3)$ and $A\in[2,\infty)$, $B\in[0,\infty)$, $M>0$  
such that \[ 
\snorm{g(x)}\leq M\np{1+\snorm{x}}^{-A}\np{1+\wakefct(x)}^{-B}.
\]
Then there exists
$ C= C(A,B,\zeta)>0$
with the following properties:
\begin{enumerate}
\item
If $A+\min\set{1,B}>3$, then
\[
\snorml{\snorm{\fsolvelss}\ast g (x)} 
\leq  C M \bb{\np{1+\snorm{x}}\bp{1+\wakefct(x)}}^{-1}.
\]
\item
If $A+\min\set{1,B}>3$ and $A+B\geq7/2$, then
\[
\snorml{\snorm{\nabla\fsolvelss}\ast g (x)} 
\leq  C M \bb{\np{1+\snorm{x}}\bp{1+\wakefct(x)}}^{-3/2}.
\]
\item
If $A+\min\set{1,B}=3$ and $A+B\geq7/2$, then
\[
\snorml{\snorm{\nabla\fsolvelss}\ast g (x)} 
\leq  C M \bb{\np{1+\snorm{x}}\bp{1+\wakefct(x)}}^{-3/2}\log_+\snorm{x}.
\]
\item
If $A+B< 3$, then
\[
\snorml{\snorm{\nabla \fsolvelss}\ast g (x)} 
\leq  C M \np{1+\snorm{x}}^{-\np{A+B}/2}
\bp{1+\wakefct(x)}^{-\np{A+B-1}/2}.
\]
\end{enumerate}
\end{lemma}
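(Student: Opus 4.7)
The plan is to reduce each of the four statements to a model convolution integral by exploiting the pointwise estimate \eqref{eq:decay.fsolvelss} and the fact that near the origin $\fsolvelss$ has a Stokes-type singularity of order $|y|^{-1}$ (with gradient of order $|y|^{-2}$), both locally integrable on $\R^3$. Combined with the decay assumption on $g$, this reduces each item to estimating
\[
J_a(x):=\int_{\R^3}\bb{(1+\snorm{x-y})(1+\wakefct(x-y))}^{-a}(1+\snorm{y})^{-A}(1+\wakefct(y))^{-B}\,\dd y,
\]
where $a=1$ for item (1) and $a=3/2$ for items (2)--(4); the factor $1+|\cdot|$ (rather than $|\cdot|$) absorbs the near-origin singularity uniformly.

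Next, I would split $\R^3=\Omega_1\cup\Omega_2\cup\Omega_3$ with $\Omega_1:=B_{\snorm{x}/2}(0)$, $\Omega_2:=B_{\snorm{x}/2}(x)$ and $\Omega_3$ the remainder. In $\Omega_1$ one has $\snorm{x-y}\gtrsim\snorm{x}$ and, by a direct computation from $\wakefct(x-y)=(\snorm{\zeta}\snorm{x-y}+\zeta\cdot x-\zeta\cdot y)/2$ together with $\snorm{\zeta\cdot y}\leq\snorm{\zeta}\snorm{x}/2$, the quasi-invariance $1+\wakefct(x-y)\gtrsim 1+\wakefct(x)$. Hence the $\fsolvelss$-weight factors out and the finiteness of $\int_{\R^3}(1+\snorm{y})^{-A}(1+\wakefct(y))^{-B}\,\dd y$ — which holds precisely when $A+\min\{1,B\}>3$ — yields the bounds in items (1)--(3). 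The analogous symmetric argument handles $\Omega_2$: the $g$-weight factors out, and one integrates $[\snorm{z}(1+\wakefct(z))]^{-a}$ over $B_{\snorm{x}/2}$, which is uniformly bounded for $a=1$ and produces the expected anisotropic factors $\snorm{x}^{1/2}(1+\wakefct(x))^{1/2}$ for $a=3/2$. In $\Omega_3$ we have $\snorm{y}\sim\snorm{x-y}$, so the integral reduces to a one-weight convolution that I would evaluate in Oseen-adapted cylindrical coordinates (axis $\hat\zeta$, so that $\wakefct(y)\sim r^2/\snorm{y}$ off the wake) combined with a dyadic decomposition in $\snorm{y}$.

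The main technical obstacle is the borderline case (3), where the $\Omega_1$ integrability of $g$ holds only marginally, producing a logarithmic divergence $\int_1^{\snorm{x}}r^{-1}\,\dd r\sim\log\snorm{x}$; this must be tracked through the dyadic decomposition to extract the $\log_+\snorm{x}$ factor. Item (4) is qualitatively different: under $A+B<3$ the $g$-weight is non-integrable at infinity, so the $\Omega_1$ strategy fails, and instead the convolution inherits (a fraction of) the decay of $g$ itself. Here I would discard the $\Omega_1$ contribution by absorbing $g$ into the larger set $\R^3$ with the gradient bound $\snorm{\nabla\fsolvelss(z)}\lesssim[\snorm{z}(1+\wakefct(z))]^{-3/2}$, and balance the exponents in the $\Omega_3$ integral (in Oseen coordinates) to produce exactly $(1+\snorm{x})^{-(A+B)/2}(1+\wakefct(x))^{-(A+B-1)/2}$. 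Throughout, the bookkeeping essentially amounts to identifying which of $\snorm{y}$, $\snorm{x-y}$ and $\snorm{x}$ is largest in each region and exploiting the near-invariance of $\wakefct$ under small perturbations.
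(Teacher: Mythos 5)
The paper does not prove this lemma from scratch: it is quoted verbatim from Kra\v{c}mar--Novotn\'y--Pokorn\'y \cite{KracmarNovotnyPokorny2001} (Theorems 3.1 and 3.2) and \cite[Theorem 3.1]{Eiter21}. You are therefore attempting to reconstruct a genuinely nontrivial external result, and the reconstruction has a fatal step: the ``quasi-invariance'' $1+\wakefct(x-y)\gtrsim 1+\wakefct(x)$ on $\Omega_1=B_{\snorm{x}/2}(0)$ is false. The wake weight is only quasi-invariant under translations ranging over a \emph{fixed bounded set} (this is exactly what is proved in Lemma~\ref{lem:conv.decay}, where $\snorm{y}\leq R$ with $R$ fixed and the constant is $1+2\snorm{\zeta}R$); it is not quasi-invariant under translations of size comparable to $\snorm{x}$. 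Concretely, take $\zeta=\mathsf{e}_1$, $x=-R\mathsf{e}_1+\tfrac{R}{2}\mathsf{e}_2$ and $y=\tfrac{R}{2}\mathsf{e}_2\in B_{\snorm{x}/2}(0)$: then $x-y=-R\mathsf{e}_1$ lies on the wake axis, so $1+\wakefct(x-y)=1$, while $1+\wakefct(x)\sim R$. Your ``direct computation'' only yields $\wakefct(x-y)\geq(\zeta\cdot x)/2$, which is vacuous whenever $\zeta\cdot x\leq 0$ even though $\wakefct(x)$ may be of order $\snorm{x}$ there. The same defect appears in your treatment of $\Omega_2$, where ``the $g$-weight factors out'' fails for the factor $(1+\wakefct(y))^{-B}$.

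This is not a repairable cosmetic issue: if your $\Omega_1$ argument were valid, item (2) would follow from the integrability condition $A+\min\{1,B\}>3$ alone, whereas the hypothesis $A+B\geq 7/2$ is genuinely necessary and is needed precisely for the region your argument trivializes. Indeed, at a point $x$ as above (with $\wakefct(x)\sim\snorm{x}\sim R$), the set of $y\in B_{\snorm{x}/2}(0)$ for which $x-y$ lies in the paraboloidal region $\{\wakefct\leq 1\}$ has volume of order $R^{2}$, and on it $(1+\snorm{y})^{-A}(1+\wakefct(y))^{-B}\sim R^{-A-B}$ while $\snorm{\nabla\fsolvelss(x-y)}\sim R^{-3/2}$; the resulting contribution $R^{1/2-A-B}$ is compatible with the claimed bound $[\snorm{x}(1+\wakefct(x))]^{-3/2}\sim R^{-3}$ only if $A+B\geq 7/2$. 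So the heart of the lemma is exactly the interaction between the wake of the kernel and the decay of $g$ on $\Omega_1\cup\Omega_2$, which in \cite{KracmarNovotnyPokorny2001} is handled by a further decomposition according to the positions of $x$, $y$ and $x-y$ relative to the wake region, not by factoring one weight out. Your outline for $\Omega_3$, the borderline logarithm in (3), and item (4) is plausible, but without a correct treatment of $\Omega_1$ and $\Omega_2$ the proof does not go through; the safest course is to do what the paper does and invoke the cited theorems.
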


\begin{proof}
These are special cases of \cite[Theorems 3.1 and 3.2]{KracmarNovotnyPokorny2001},
see also \cite[Theorem 3.1]{Eiter21}.
\end{proof}

We next provide similar convolution estimates
for $\nabla\fsolpres$ in a particular case.
However, since $\nabla\fsolpres$ is strongly singular at the origin,
we exclude a neighborhood
from the domain of integration.

\begin{lemma}\label{lem:conv.fsolpres}
Let $g\in\LS{\infty}(\Omega)$ and $M>0$
such that
\[ 
\snorm{g(x)}
\leq M\bp{\np{1+\snorm{x}}\np{1+\wakefct(x)}}^{-2}.
\]
Let $R>0$ and $\chi:=\chi_{\R^3\setminus B_R}$ be the characteristic function
of the exterior to the ball $B_R$ centered at the origin.
Then there exists
$ C= C(R,\zeta)>0$
such that
\[
\snorml{\bp{\chi\nabla\fsolpres}\ast g (x)} 
\leq  C M \snorm{x}^{-2}
\min\setl{1,\np{1+\wakefct(x)}^{-2}\log_+\snorm{x}
+\snorm{x}^{-1}\log_+\snorm{x}
+\np{1+\wakefct(x)}^{-1}}.
\]
\end{lemma}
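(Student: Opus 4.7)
The plan is to estimate the convolution directly by subdomain decomposition, using the bound $\snorm{\nabla\fsolpres(y)}\leq C\snorm{y}^{-3}$ on $\set{\snorm{y}\geq R}$ from~\eqref{eq:decay.fsolpres} together with the decay hypothesis on $g$. The task reduces to controlling the scalar integral
\[
J(x) = \int_{\snorm{y}\geq R}\snorm{y}^{-3}\bb{(1+\snorm{x-y})(1+\wakefct(x-y))}^{-2}\, dy
\]
and bounding it by the right-hand side of the claim with $M$ omitted.

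For the uniform bound $J(x)\leq C\snorm{x}^{-2}$, I would split the integration domain into $A_1=\set{R\leq\snorm{y}\leq\snorm{x}/2}$, $A_2=\set{\snorm{x-y}\leq\snorm{x}/2}$, and $A_3=\set{\snorm{y}\geq R}\setminus(A_1\cup A_2)$. On $A_2$ one has $\snorm{y}\sim\snorm{x}$, so $\snorm{y}^{-3}\leq C\snorm{x}^{-3}$ and the remaining integral of $[(1+\snorm{w})(1+\wakefct(w))]^{-2}$ over $\set{\snorm{w}\leq\snorm{x}/2}$ grows at most like $\log\snorm{x}$, yielding a contribution $\leq C\snorm{x}^{-3}\log_+\snorm{x}\leq C\snorm{x}^{-2}$. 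On $A_3$ one has $\snorm{x-y}\geq\snorm{y}/2$, and the crude estimate $\snorm{y}^{-3}\snorm{x-y}^{-2}\leq C\snorm{y}^{-5}$ gives a tail contribution $\leq C\snorm{x}^{-2}$. The delicate part is $A_1$, where $\snorm{x-y}\sim\snorm{x}$: the factor $(1+\snorm{x})^{-2}$ comes out and the resulting integral $\int_{A_1}\snorm{y}^{-3}(1+\wakefct(x-y))^{-2}\,dy$ is controlled uniformly in~$x$ through integration in coordinates aligned with $\zeta$, in the spirit of the anisotropic convolution estimates of~\cite{KracmarNovotnyPokorny2001} underlying Lemma~\ref{lem:conv.fsolvelss}.

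For the refined estimate appearing in the minimum, I would track the wake factor $\wakefct(x)$ via the elementary inequality $\wakefct(x-y)\geq\wakefct(x)-\snorm{\zeta}\snorm{y}$. On the subregion of $A_1$ where $\snorm{y}\leq\wakefct(x)/(2\snorm{\zeta})$ this yields $\wakefct(x-y)\geq\wakefct(x)/2$; combined with the critical integral $\int_R^{\snorm{x}/2}r^{-1}\,dr$ it produces the term $(1+\wakefct(x))^{-2}\log_+\snorm{x}$. The complementary part of $A_1$, where $\snorm{y}$ exceeds $\wakefct(x)/(2\snorm{\zeta})$, contributes $\snorm{x}^{-1}\log_+\snorm{x}$ via another $r^{-1}$-integration truncated from below. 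The remaining wake-dependent term $(1+\wakefct(x))^{-1}$ arises from $A_2$ and $A_3$ after invoking the wake decay of $g$ in those regions. Combining these yields the second argument of the minimum.

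The main technical obstacle is the careful bookkeeping of the anisotropic wake factor together with the logarithmic corrections which are unavoidable because $\snorm{y}^{-3}$ is exactly the borderline non-integrable decay rate in $\R^3$. The analysis parallels in spirit the convolution estimates in~\cite{KracmarNovotnyPokorny2001} underlying Lemma~\ref{lem:conv.fsolvelss}, except that here the kernel is isotropic and the anisotropy is carried entirely by the weight~$g$.
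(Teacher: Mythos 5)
Your overall strategy---estimating the kernel by $\snorm{\nabla\fsolpres(y)}\leq C\snorm{y}^{-3}$ on $\set{\snorm{y}\geq R}$ and reducing everything to the scalar integral $J(x)$---is the same reduction the paper makes (the paper then simply cites the computations of \cite[Section~2]{KracmarNovotnyPokorny2001} for the anisotropic three-term bound and gives a one-line isotropic estimate for the other branch of the minimum). However, your argument for the uniform bound $J(x)\leq C\snorm{x}^{-2}$ contains a genuine gap at exactly the delicate point you identify: the claim that
\[
\int_{A_1}\snorm{y}^{-3}\bp{1+\wakefct(x-y)}^{-2}\,\dd y,
\qquad A_1=\setc{y}{R\leq\snorm{y}\leq\snorm{x}/2},
\]
is bounded uniformly in $x$ is false. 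Take $x=-T\hat\zeta$ with $T$ large, so that $\wakefct(x)=0$. For $\snorm{y}\leq\sqrt{T}$ one computes $\wakefct(x-y)=\tfrac12\bp{\snorm{\zeta}\snorm{x-y}+\zeta\cdot(x-y)}\leq C\snorm{\zeta}\snorm{y_\perp}^2/T\leq C\snorm{\zeta}$, where $y_\perp$ is the component of $y$ orthogonal to $\zeta$; hence $(1+\wakefct(x-y))^{-2}\geq c>0$ on the whole ball $B_{\sqrt T}$, and the integral is bounded below by $c\int_{R\leq\snorm{y}\leq\sqrt{T}}\snorm{y}^{-3}\,\dd y\sim\log T$. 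The kernel exponent $3$ is exactly critical in $\R^3$, and along the wake axis the weight $(1+\wakefct(x-y))^{-2}$ provides no help on a paraboloidal region of transversal radius $\sim\sqrt{\snorm{x}}$, so the logarithm cannot be removed there. Consequently $A_1$ contributes $\gtrsim\snorm{x}^{-2}\log\snorm{x}$ for $x$ on the wake axis, and no argument that estimates kernel and $g$ by absolute values (as both you and the paper do) can yield the clean $\snorm{x}^{-2}$ bound in the wake region; the best available there is $\snorm{x}^{-2}\log_+\snorm{x}$, which is precisely the first term $(1+\wakefct(x))^{-2}\log_+\snorm{x}$ of the anisotropic branch evaluated at $\wakefct(x)=O(1)$.

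Two further remarks. First, the same borderline logarithm affects the paper's own justification of the isotropic branch, namely the claim $\int_{\R^3}(1+\snorm{x-y})^{-3}(1+\snorm{y})^{-2}\,\dd y\leq C\snorm{x}^{-2}$: the region $\snorm{x-y}\leq\snorm{x}/2$ already contributes $\sim\snorm{x}^{-2}\log_+\snorm{x}$. So your difficulty is not an artifact of your decomposition; the ``$1$'' branch of the stated minimum is attainable only up to a factor $\log_+\snorm{x}$ by this method (which is harmless for the applications in Theorems~\ref{thm:asymp.linF} and~\ref{thm:asymp.nonlin}, but should be stated). Second, your treatment of $A_2$ and $A_3$ is correct ($\int_{\snorm{w}\leq T}[(1+\snorm{w})(1+\wakefct(w))]^{-2}\dd w\sim\log_+T$ is indeed the critical case $A+\min\set{1,B}=3$ of Lemma~\ref{lem:conv.fsolvelss}-type integrals), and your mechanism for extracting $(1+\wakefct(x))^{-2}\log_+\snorm{x}$ from the subregion $\snorm{y}\leq\wakefct(x)/(2\snorm{\zeta})$ of $A_1$ via $\wakefct(x-y)\geq\wakefct(x)-\snorm{\zeta}\snorm{y}$ is sound; but the complementary subregion of $A_1$ is only sketched, and as it stands you do not actually derive the term $\snorm{x}^{-1}\log_+\snorm{x}$ from an ``$r^{-1}$-integration truncated from below''---dropping the wake weight there gives $\log(\snorm{x}/\wakefct(x))$ rather than $\snorm{x}^{-1}\log_+\snorm{x}$, so this step needs the genuinely anisotropic (cylindrical) integration of \cite{KracmarNovotnyPokorny2001} to be carried out, not merely invoked.
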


\begin{proof}
We use \eqref{eq:decay.fsolpres}
and the decay of $g$ to estimate
\[
\begin{aligned}
I(x)\coloneqq
\snorml{\bp{\chi\nabla\fsolpres}\ast g (x)}
&\leq
M\int_{\R^3\setminus B_{R}(x)} 
\snorm{x-y}^{-3} \bp{\np{1+\snorm{y}}\np{1+\wakefct(y)}}^{-2} \,\dd y
\\
&\leq 
CM\int_{\R^3} 
\np{1+\snorm{x-y}}^{-3} \bp{\np{1+\snorm{y}}\np{1+\wakefct(y)}}^{-2} \,\dd y.
\end{aligned}
\]
Following the calculations in~\cite[Section~2]{KracmarNovotnyPokorny2001},
we obtain
\[
\begin{aligned}
I(x)
\leq
CM\bp{\snorm{x}^{-2}\np{1+\wakefct(x)}^{-2}\log_+\snorm{x}
+\snorm{x}^{-3}\log_+\snorm{x}
+\snorm{x}^{-2}\np{1+\wakefct(x)}^{-1}}.
\end{aligned}
\]
Alternatively, we can omit the anisotropic contributions 
and directly estimate
\[
I(x)
\leq 
CM\int_{\R^3} 
\np{1+\snorm{x-y}}^{-3} \np{1+\snorm{y}}^{-2} \,\dd y
\leq 
CM\snorm{x}^{-2},
\]
which gives an improved estimate along 
$\setc{x\in\Omega}{\wakefct(x)=0}
=\setc{x\in\Omega}{x=\alpha\zeta, \, \alpha>0}$.
In summary, we conclude the asserted estimate.
\end{proof}

\subsection{Time-periodic fundamental solutions}

In the time-periodic case, the fundamental solutions can be written as solutions to a system of partial differential equations on the group $G=\torus\times\R^3$. Following \cite{Ky16,EiterKyed17,EiterKyed18}, the fundamental solution of the Stokes ($\zeta=0$) or Oseen ($\zeta\not =0$) equations is a pair $(\fsolvel,  {\mathrm Q}) \in {\mathcal S}'(G)^{3 \times 3} \times {\mathcal S}'(G)^{3}$ satisfying
\begin{equation}
\left\{
\begin{aligned}
 \partial_t \fsolvel - \nu \Delta \fsolvel  +  \nabla  {\mathrm Q} - (\zeta \cdot \nabla) \fsolvel &= \II \delta_G &&  \text{in } G,  \\
 \nabla \cdot \fsolvel &= 0 && \text{in } G. 
\end{aligned}
\right.
\label{periodicfundsol}
\end{equation}
The pressure component is given by 
\begin{equation}
 {\mathrm Q} = \delta_{{\mathbb T}} \otimes  {\mathrm P}, 
 \label{pressfs}
\end{equation}
that is, we formally have ${\mathrm Q}(t,x) = \delta_{{\mathbb T}}(t){\mathrm P}(x)$,
where $\mathrm P$ is the pressure of the fundamental solution in the steady cases, given in~\eqref{PE}.
The velocity component can be split in the form
\begin{equation}
\fsolvel = 1_{{\mathbb T}} \otimes \fsolvelss + \fsolvelpp, 
\label{fsveldec}
\end{equation}
with $\fsolvelss$ the velocity steady part of the  fundamental solution, already presented in the previous section, and $\fsolvelpp$ the purely periodic part of $\fsolvel$,
which is given by
$$
\fsolvelpp = {\mathcal F}^{-1}_G \left[  \frac{1-\delta_{\mathbb Z}(k)}{|\xi|^2 + i \left(    \frac{2 \pi k}{\mathcal T}  - \zeta \cdot \xi \right)} \left( \II - \hat{\xi} \otimes \hat{\xi}\right)\right],
$$
where ${\mathcal F}_G: {\mathcal S}'(G) \to {\mathcal S}'(\hat{G})$, $\hat{G}:={\mathbb Z} \times {\mathbb R}^3$, is the Fourier transform on the group $G$.
For details on the definition of the Fourier transform on $G$ and the spaces $\mathcal S'(G)$ and $\mathcal S'(\hat G)$
of tempered distributions,
we refer to \cite{EiterKyed17}.
We recall the following decay properties of $\fsolvelpp$.

\begin{lemma}
\label{lem:decay.fsolvelpp}
Let $q\in[1,\infty)$. Then
\begin{equation}
\forall\alpha\in\N_0^3 \quad  \forall \varepsilon > 0 \quad  \exists C>0 \quad  \forall \snorm{x}\geq \varepsilon :
\quad
\norml{D^\alpha_x \fsolvelpp(\cdot,x)}_{\LS{q}(\torus)}
\leq C\snorm{x}^{-3-\snorm{\alpha}}.
\label{eq:decay.fsolvelpp}
\end{equation}
\end{lemma}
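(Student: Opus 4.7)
The plan is to reduce the statement to a pointwise-in-$t$ estimate by exploiting the finite measure of $\torus$, and then analyze $\fsolvelpp$ through its Fourier series in the time variable, since each Fourier mode satisfies a resolvent-Oseen system with nonzero spectral parameter. Because $L^\infty(\torus)\hookrightarrow L^q(\torus)$ for every $q\in[1,\infty)$, it suffices to prove
\[
\sup_{t\in\torus}\snorml{D^\alpha_x\fsolvelpp(t,x)}\leq C\snorm{x}^{-3-\snorm{\alpha}} \quad\text{for } \snorm{x}\geq \varepsilon.
\]

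First, I would write $\fsolvelpp(t,x)=\sum_{k\in\Z\setminus\set{0}}e^{i\lambda_k t}U_k(x)$ with $\lambda_k:=2\pi k/\mathcal T$, so that each coefficient $U_k$ is the inverse spatial Fourier transform of the multiplier $m_k(\xi):=\bb{\snorm{\xi}^2+i(\lambda_k-\zeta\cdot\xi)}^{-1}(\II-\hat\xi\otimes\hat\xi)$ and solves the resolvent problem $i\lambda_k U_k-\nu\Delta U_k-\zeta\cdot\nabla U_k+\nabla P_k=\II\delta_{\R^3}$, $\nabla\cdot U_k=0$. Applying the Helmholtz projection explicitly, $U_k$ can be represented in terms of the scalar Green function of $-\nu\Delta+i\lambda_k-\zeta\cdot\nabla$, which is of Yukawa type with complex exponent of order $\snorm{\lambda_k}^{1/2}$ and therefore decays exponentially in $\snorm{x}$.

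Next, I would derive the pointwise bounds $\snorml{D^\alpha_x U_k(x)}\leq C_{N,\alpha}(1+\snorm{k})^{-N}\snorm{x}^{-3-\snorm{\alpha}}$ for every $N\in\N$ and $\snorm{x}\geq\varepsilon$. The factor $\snorm{x}^{-3-\snorm{\alpha}}$ comes from iterated integration by parts in $\xi$ after multiplying by suitable polynomials in $x$; the factor $(1+\snorm{k})^{-N}$ uses the fact that $m_k$ and its $\xi$-derivatives are controlled by negative powers of $\snorm{\lambda_k}$ on regions where $\snorm{\xi}\lesssim 1$. Summing in $k\neq 0$ then yields the asserted estimate.

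The main obstacle will be the lack of smoothness of $\II-\hat\xi\otimes\hat\xi$ at $\xi=0$, which blocks a naive integration-by-parts scheme. The standard remedy is to split the $\xi$-integral with a smooth cutoff near the origin: on $\set{\snorm{\xi}\geq 1}$ the symbol is smooth, and repeated integration by parts gives arbitrarily fast polynomial decay in $\snorm{x}$; on $\set{\snorm{\xi}\leq 1}$ the integral is bounded directly by the $L^1$-norm of $m_k$ on the unit ball, which is of order $\snorm{\lambda_k}^{-1}$ and hence summable in $k$. The polynomial $\snorm{x}^{-3}$ tail produced by the Riesz-type correction from the Helmholtz projection must be tracked carefully to secure the precise decay rate $\snorm{x}^{-3-\snorm{\alpha}}$ rather than merely $\snorm{x}^{-1-\snorm{\alpha}}$.
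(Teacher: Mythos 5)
The paper does not actually prove this lemma; it quotes it from \cite{EiterKyed18} (Theorem~1.1). Judged on its own, your argument has a genuine gap: the central quantitative claim $\snorml{D^\alpha_x U_k(x)}\leq C_{N,\alpha}(1+\snorm{k})^{-N}\snorm{x}^{-3-\snorm{\alpha}}$ is false for $N\geq 2$. Expanding the resolvent factor near $\xi=0$ (with $\lambda_k=2\pi k/\per$),
\begin{equation*}
\frac{1}{\snorm{\xi}^2+i(\lambda_k-\zeta\cdot\xi)}
=\frac{1}{i\lambda_k}-\frac{\snorm{\xi}^2-i\,\zeta\cdot\xi}{(i\lambda_k)^2}+\dotsb,
\end{equation*}
so the low-frequency part of $U_k$ has leading contribution $(i\lambda_k)^{-1}\calF^{-1}\bb{\chi(\xi)\np{\II-\hat\xi\otimes\hat\xi}}(x)$, and the inverse transform of the cut-off Helmholtz symbol is \emph{exactly} of order $\snorm{x}^{-3}$ for $\snorm{x}\geq\varepsilon$ — this is the Riesz-type correction you mention, and it carries only a single power of $\lambda_k^{-1}$. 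Hence $\snorm{U_k(x)}\sim\snorm{k}^{-1}\snorm{x}^{-3}$, the series $\sum_{k\neq0}\snorm{U_k(x)}$ diverges, and the plan of controlling $\sup_{t}\snorm{\fsolvelpp(t,x)}$ by $\sum_k\snorm{U_k(x)}$ cannot close. Your fallback for the region $\set{\snorm{\xi}\leq1}$ is inconsistent on two counts: an $L^1_\xi$ bound on the symbol yields no spatial decay at all, and $\sum_{k\neq0}\snorm{\lambda_k}^{-1}$ is not summable.

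What makes the lemma true is precisely the structure discarded in your first reduction. The $\snorm{k}^{-1}$-heavy part of $D^\alpha_xU_k$ is $\lambda_k^{-1}$ times a \emph{fixed} function of $x$ of size $\snorm{x}^{-3-\snorm{\alpha}}$ (the symbol $\xi^\alpha(\II-\hat\xi\otimes\hat\xi)$ is homogeneous of degree $\snorm{\alpha}$); summing over $k$ together with the oscillating factors $e^{i\lambda_kt}$ produces a sawtooth-type Fourier series in $t$, which belongs to $\LS{q}(\torus)$ for every finite $q$ but is only conditionally convergent. The estimate must therefore be taken in $\LS{q}(\torus)$ rather than pointwise in $t$ — note that the statement deliberately excludes $q=\infty$. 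For $q\leq2$ one can already conclude by Parseval from the (correct) bound $\snorm{D^\alpha_xU_k(x)}\leq C\snorm{k}^{-1}\snorm{x}^{-3-\snorm{\alpha}}$; for general $q<\infty$ one must keep the $k$-dependence factored as above, or follow \cite{EiterKyed18}, where $\fsolvelpp$ is represented as the Helmholtz projection applied to the purely periodic part of the time-dependent Oseen heat kernel (which decays faster than any polynomial), the rate $\snorm{x}^{-3-\snorm{\alpha}}$ being inherited from the projection kernel. Your remaining ingredients (smooth cutoff, integration by parts at high frequencies, exponential decay of the scalar resolvent kernel) are fine, but they do not repair this summation issue.
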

\begin{proof}
See~\cite[Theorem 1.1]{EiterKyed18}.
\end{proof}

In comparison with the pointwise estimate~\eqref{eq:decay.fsolvelss}
for the steady-state fundamental solution,
the estimate~\eqref{eq:decay.fsolvelpp} is of higher order and homogeneous in space.
Therefore, the occurrence of a wake region behind the body
is mainly reflected in the steady-state part of the velocity field.

The following lemma can be used to derive pointwise estimates
for convolutions of the fundamental solution $\fsolvelpp$
and suitably decaying functions.

\begin{lemma}\label{lem:conv.fsolvelpp}
Let $g\in\LS{\infty}(\torus\times\R^3)$ and $A,M\in(0,\infty)$
such that \[ 
\snorm{g(t,x)}\leq M\np{1+\snorm{x}}^{-A}.
\]
Then for any $\varepsilon>0$ there exists 
$C= C(A,\zeta,\per,\varepsilon)>0$
such that
\begin{equation}
\forall \snorm{x}\geq \varepsilon: \quad
\snorml{\left[ \snorm{\nabla\fsolvelpp} \ast_G g \right](t,x)} 
\leq  C M 
\np{1+\snorm{x}}^{-\min\set{A,4}}
\label{est:ConvFundsolpp_Grad}
\end{equation}
and, if $A>3$,
\begin{equation}
\forall \snorm{x}\geq \varepsilon: \quad
\snorml{\left[\snorm{\fsolvelpp}\ast_G g \right](t,x)} 
\leq C M
\np{1+\snorm{x}}^{-3}.
\label{est:ConvFundsolpp}
\end{equation}
\end{lemma}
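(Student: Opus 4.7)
The plan is to reduce the space-time convolution to a purely spatial one by integrating first in the time variable, and then to split the $y$-integration into three regions, as in standard Oseen-type convolution bounds.

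First, for fixed $t$ and $x$ with $\snorm{x}\geq\varepsilon$, Fubini's theorem and the pointwise bound on $g$ yield
\begin{equation*}
\snorml{(\nabla\fsolvelpp \ast_G g)(t,x)}
\leq M \int_{\R^3} \norml{\nabla\fsolvelpp(\cdot, x-y)}_{\LS{1}(\torus)} \np{1+\snorm{y}}^{-A} \, \dd y,
\end{equation*}
and an analogous inequality holds for $\fsolvelpp\ast_G g$. For $\snorm{z}\geq\delta$ with $\delta>0$ fixed, Lemma~\ref{lem:decay.fsolvelpp} applied with $q=1$ gives $\norml{\nabla\fsolvelpp(\cdot, z)}_{\LS{1}(\torus)} \leq C\snorm{z}^{-4}$ and $\norml{\fsolvelpp(\cdot, z)}_{\LS{1}(\torus)} \leq C\snorm{z}^{-3}$. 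For $\snorm{z}<\delta$, I would rely on the local integrability of $\fsolvelpp$ and $\nabla\fsolvelpp$ in space uniformly in time, a standard property of the time-periodic Oseen kernel following from its Fourier representation in~\cite{EiterKyed17,EiterKyed18} (reflecting that the singular behavior at the origin is carried by the steady part $1_\torus\otimes\fsolvelss$, not by $\fsolvelpp$).

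Next, I would split the spatial integral into the three regions $A_1=\setc{y}{\snorm{y}<\snorm{x}/2}$, $A_2=\setc{y}{\snorm{x-y}<\snorm{x}/2}$, and $A_3=\R^3\setminus\np{A_1\cup A_2}$, and estimate each contribution to the gradient bound. On $A_1$ one has $\snorm{x-y}\geq\snorm{x}/2$, so the kernel is bounded by $C\snorm{x}^{-4}$, and the $\snorm{y}$-integral of $\np{1+\snorm{y}}^{-A}$ contributes $\snorm{x}^{3-A}$, $\log\snorm{x}$, or $O(1)$ according to $A<3$, $A=3$, or $A>3$, yielding a term of order $\np{1+\snorm{x}}^{-\min\set{A+1,4}}$ up to logarithms. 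On $A_2$ one bounds $\np{1+\snorm{y}}^{-A}\leq C\np{1+\snorm{x}}^{-A}$ and observes that the integral of the kernel over $\setc{z}{\snorm{z}<\snorm{x}/2}$ is uniformly bounded, using local integrability for $\snorm{z}<\delta$ and convergence of $\int_{\delta\leq\snorm{z}}\snorm{z}^{-4}\,\dd z$ on the remainder. On $A_3$ both $\snorm{y}$ and $\snorm{x-y}$ exceed $\snorm{x}/2\geq\varepsilon/2$, and integrating $\snorm{x-y}^{-4}$ over $\setc{y}{\snorm{x-y}\geq\snorm{x}/2}$ produces a contribution of order $\np{1+\snorm{x}}^{-A-1}$. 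Summing gives the asserted bound $\np{1+\snorm{x}}^{-\min\set{A,4}}$ in every parameter range.

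The second estimate follows from the same three-region strategy applied to the $\snorm{z}^{-3}$ kernel. The hypothesis $A>3$ makes $\np{1+\snorm{y}}^{-A}$ integrable over $A_1$, so that region contributes $\snorm{x}^{-3}$; regions $A_2$ and $A_3$ decay at least like $\snorm{x}^{-A}$ (possibly with a logarithmic factor in $A_2$ due to the borderline integrability of $\snorm{z}^{-3}$ in $\R^3$), which is negligible compared with $\snorm{x}^{-3}$. The main obstacle I anticipate is the local integrability of $\fsolvelpp$ and $\nabla\fsolvelpp$ near the spatial origin, uniformly in time, since Lemma~\ref{lem:decay.fsolvelpp} furnishes no information there; once this is in hand, the rest is the standard convolution calculus familiar from~\cite{KracmarNovotnyPokorny2001,Eiter21}.
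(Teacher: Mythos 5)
Your argument is essentially correct, and it is worth noting that the paper itself does not prove this lemma at all: its ``proof'' is a one-line citation of \cite[Theorem 3.3]{Eiter21}. What you have written out is, in effect, the standard proof of that cited result --- reduction to a spatial convolution of $\norm{\nabla\fsolvelpp(\cdot,z)}_{\LS{1}(\torus)}$ against $(1+\snorm{y})^{-A}$, followed by the three-region splitting familiar from \cite{KracmarNovotnyPokorny2001}. The one ingredient you correctly flag as not following from Lemma~\ref{lem:decay.fsolvelpp} --- integrability of $\fsolvelpp$ and $\nabla\fsolvelpp$ on $\torus\times B_\delta$ --- is indeed available: \cite[Theorem 1.1]{EiterKyed18} gives $\fsolvelpp\in\LS{q}(\torus\times\R^3)$ and $\nabla\fsolvelpp\in\LS{s}(\torus\times\R^3)$ for suitable $q,s>1$ including $q=s=1$ locally, and the present paper itself invokes $\partial_j\fsolvel\in\LSloc{1}(\torus\times\R^3)$ with the same reference in the proof of Theorem~\ref{thm:asymp.nonlin}; so this is a citation to supply, not a gap. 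Two small imprecisions, neither fatal: on $A_3$ for the second estimate, the weight $(1+\snorm{y})^{-A}$ alone cannot be pulled out, since $\int_{\snorm{z}\geq\snorm{x}/2}\snorm{z}^{-3}\,\dd z$ diverges; you should either use $\snorm{x-y}^{-3}\leq 8\snorm{x}^{-3}$ and integrate the weight (giving $\snorm{x}^{-3}$, which already suffices), or split $A_3$ at $\snorm{y}=2\snorm{x}$ to recover the rate $\snorm{x}^{-A}$ you claim. Likewise, in $A_1$ for the gradient bound at the borderline $A=3$ you pick up $\snorm{x}^{-4}\log_+\snorm{x}$, which you should explicitly absorb into $\snorm{x}^{-3}=\snorm{x}^{-\min\set{A,4}}$. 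With those two sentences added, the proof is complete and self-contained, which is arguably more informative than the paper's bare citation.
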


\begin{proof}
See \cite[Theorem 3.3]{Eiter21}.
\end{proof}

\subsection{Anisotropic convolution estimates}

In the next lemma we study convolutions of functions with anisotropic decay, which is expressed by $\wakefct(x):= \left[|\zeta| |x| + (\zeta\cdot x )\right] / 2$. The characteristic function of the exterior domain $\Omega$ will be denoted by $\chi_\Omega$. 
\begin{lemma}\label{lem:conv.decay}
Let $\zeta\in\R^3\setminus\set{0}$
and $\phi\in\LS{1}(\torus\times(\R^3\setminus\set{0}))$
such that for some $\varepsilon>0$, $\alpha,\beta\geq 0$ and $p\in(1,\infty]$
\[
\exists C>0 \quad  \forall \snorm{x}\geq\varepsilon : \quad \norm{\phi(\cdot,x)}_{\LS{p}(\torus)}
\leq C\snorm{x}^{-\alpha}(1+\wakefct(x))^{-\beta}.
\]
Let  $R>0$ such that $\Sigma \subset B_R$. 
Let $p':=p/(p-1)$, where $\infty':=1$, 
and let $f$ be given by
\begin{enumerate}
\item
$f=\hat f \chi_\Omega$ with 
$\hat f\in \LS{p'}(\torus\times\Omega)$,
$\supp\hat f\subset \torus\times B_{R}$, or
\item
$f=\hat f\delta_\Sigma$ with
$\hat f\in \LS{p'}(\torus\times\Sigma)$,
\end{enumerate}
and define, 
\[
\Lambda(t)\coloneqq 
\begin{cases}
\displaystyle  \int_\Omega \hat f(t,y)\,\dd y & \text{if }f=\hat f\chi_\Omega,
\medskip \\
\displaystyle  \int_\Sigma \hat f(t,y)\,\dd S(y) & \text{if }f=\hat f\delta_\Sigma,
\end{cases} 
\qquad 
\Xi(t)\coloneqq 
\begin{cases}
\displaystyle  \int_\Omega y \hat f(t,y)\,\dd y & \text{if }f=\hat f\chi_\Omega,
\medskip \\
\displaystyle  \int_\Sigma y \hat f(t,y)\,\dd S(y) & \text{if }f=\hat f\delta_\Sigma,
\end{cases} 
\]
and
\[
M\coloneqq
\begin{cases}
\displaystyle \norm{\hat f}_{\LS{p'}(\torus\times\Omega)} & \text{if }f=\hat f\chi_\Omega,
\medskip \\
\displaystyle \norm{\hat f}_{\LS{p'}(\torus\times\Sigma)} & \text{if }f=\hat f\delta_\Sigma.
\end{cases}
\]
For $S>R+\varepsilon$ there exists $C=C(\alpha,\beta,\varepsilon,R,S,\zeta)>0$ such that
\[
\forall \snorm{x}\geq S : \quad \snorm{\nb{\phi\ast_G f}(t,x)}
\leq C\snorm{x}^{-\alpha}(1+\wakefct(x))^{-\beta} M.
\]
If $\psi\in\CS{1}(\torus\times\np{\R^3\setminus\set{0}})$
is such that
$\snorm{\nabla\psi}=\phi$,
then 
there is $C=C(\alpha,\beta,\varepsilon,R,S,\zeta)>0$ such that
\[
\forall \snorm{x}\geq S: \quad 
 \snorml{\nb{\psi\ast_G f}(t,x)-\bb{\psi(\cdot,x) \ast_\torus \Lambda}(t)}
\leq C\snorm{x}^{-\alpha}(1+\wakefct(x))^{-\beta}
M.
\]
If $\xi\in\CS{2}(\torus\times\R^3\setminus\set{0})$
such that
$\snorm{\nabla^2\xi}=\phi$,
then 
there is $C=C(\alpha,\beta,\varepsilon,R,S,\zeta)>0$ such that
\[
\forall \snorm{x}\geq S: \quad \snorml{\nb{\xi \ast_G f}(t,x)-\bb{\xi(\cdot,x) \ast_\torus \Lambda} (t)
+ \bb{\nabla \xi(\cdot,x)\ast_\torus \Xi}(t)}
\leq C\snorm{x}^{-\alpha}(1+\wakefct(x))^{-\beta} M.
\]

In particular, if $\psi$, $\xi$ and $ f$ are time-independent,
then also $\Lambda$ and $\Xi$ are time-independent and the previous estimates reduce to 
\[
\begin{aligned}
\snorm{[\phi\ast_{{\mathbb R}^3} f](x)}
&\leq C\snorm{x}^{-\alpha}(1+\wakefct(x))^{-\beta}
M,
\\
\snorml{\nb{\psi\ast_{{\mathbb R}^3} f}(x)-\Lambda \psi(x)}
&\leq C\snorm{x}^{-\alpha}(1+\wakefct(x))^{-\beta}
M,
\\
\snorml{\nb{\xi\ast_{{\mathbb R}^3} f}(x)-\Lambda\xi(x)
+ \Xi\cdot\nabla \xi(x)}
&\leq C\snorm{x}^{-\alpha}(1+\wakefct(x))^{-\beta}
M.
\end{aligned}
\]
\end{lemma}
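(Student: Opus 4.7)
The plan is to exploit the bounded support of $f$ in the spatial variable, which allows us to reduce every estimate to the pointwise bound on $\phi$ via a standard comparison of the anisotropic weights at $x$ and at $x-\theta y$ for $\theta\in[0,1]$ and $y\in B_R$. Specifically, the first step is to establish the geometric inequalities
\[
\snorm{x-\theta y} \geq c\snorm{x},
\qquad
1+\wakefct(x-\theta y) \geq c\np{1+\wakefct(x)},
\qquad
\snorm{x}\geq S,\; y\in B_R,\; \theta\in[0,1],
\]
for a constant $c=c(R,S,\zeta)>0$. The first follows from the triangle inequality and $\snorm{x}\geq S>R+\varepsilon$; the second follows from the Lipschitz bound $\snorm{\wakefct(x-\theta y)-\wakefct(x)}\leq \snorm{\zeta}\snorm{y}$, combined with $R$-boundedness of the support of $f$. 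Consequently, for such $x$ and $y$,
\[
\norm{\phi(\cdot,x-\theta y)}_{\LS{p}(\torus)}
\leq C\snorm{x}^{-\alpha}\np{1+\wakefct(x)}^{-\beta},
\]
uniformly in $\theta\in[0,1]$.

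For the first estimate, I would write the $G$-convolution as
\[
\nb{\phi\ast_G f}(t,x)
=\int_\torus\int_{\R^3} \phi(s,x-y)\,f(t-s,y)\,\dd y\,\dd s
\]
and apply Hölder's inequality in $s\in\torus$ with exponents $(p,p')$, followed by Hölder in $y$ over the bounded support (either $B_R$ or $\Sigma\subset B_R$) to pair the resulting $\LS{p'}(\torus)$-norm with $M$. The geometric inequality above turns $\norm{\phi(\cdot,x-y)}_{\LS p(\torus)}$ into the required anisotropic prefactor and closes the argument. The surface case is identical, replacing $\dd y$ by $\dd S(y)$.

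For the second and third estimates, I would use Taylor expansion of $\psi$ and $\xi$ around $x$. Writing
\[
\psi(t-s,x-y)-\psi(t-s,x)=-\int_0^1 y\cdot\nabla\psi(t-s,x-\theta y)\,\dd\theta
\]
and inserting into $\nb{\psi\ast_G f}-\bb{\psi(\cdot,x)\ast_\torus\Lambda}$ (which removes the $\psi(t-s,x)$ term by the very definition of $\Lambda$), Minkowski's inequality in $\theta$ together with $\snorm{\nabla\psi}=\phi$ produces the same type of bound as in the first estimate, with an extra $\snorm{y}\leq R$ factor that is absorbed into the constant. For the third estimate, a second-order Taylor remainder,
\[
\xi(t-s,x-y)-\xi(t-s,x)+y\cdot\nabla\xi(t-s,x)
=\int_0^1 (1-\theta)\,y^\top \nabla^2\xi(t-s,x-\theta y)\,y\,\dd\theta,
\]
plays the same role: the definitions of $\Lambda$ and $\Xi$ cancel the zeroth- and first-order terms, and $\snorm{\nabla^2\xi}=\phi$ combined with the geometric comparison handles the remainder. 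The time-independent ``in particular'' statement then follows immediately, since $\Lambda$, $\Xi$ are constants and the torus convolutions collapse.

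The main obstacle is purely the anisotropic Lipschitz comparison for $\wakefct$, which is essential because $\wakefct$ vanishes on the ray $\R_+\zeta$ and could in principle be much smaller at $x-y$ than at $x$; the Lipschitz bound shows that the change is at most $O(R)$, hence does not affect $1+\wakefct$ up to a multiplicative constant. Everything else is a bookkeeping exercise combining Hölder/Minkowski inequalities with the pointwise assumption on $\phi$.
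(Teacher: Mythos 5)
Your proposal is correct and follows essentially the same route as the paper's proof: the same comparison of the weights at $x$ and $x-\theta y$ (the paper's version of your Lipschitz bound for $\wakefct$), Hölder in the time variable against the $\LS{p'}$-norm of $\hat f$, and the fundamental theorem of calculus / second-order Taylor remainder with $\Lambda$ and $\Xi$ cancelling the zeroth- and first-order terms. The only detail worth making explicit is that the lower bound $\snorm{x-\theta y}\geq S-R>\varepsilon$ is what licenses applying the pointwise hypothesis on $\phi$ at the shifted point.
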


\begin{proof}
We only treat the case $ f=\hat f\chi_\Omega$ here. 
For $ f=\hat f\delta_\Sigma$ we can proceed along the same lines 
by replacing volume integrals over $\Omega$ 
with boundary integrals over $\Sigma$.

For $\snorm{x}\geq S>R+\varepsilon\geq R\geq \snorm{y}$ and $\theta\in[0,1]$
we have
\[
\begin{aligned}
\snorm{x-\theta y}
\geq \snorm{x}-\theta \snorm{y}
&\geq (1-\theta R/S)\snorm{x}
\geq(1-R/S)\snorm{x} \geq S-R>\varepsilon,
\\
(1+2\snorm{\zeta} R)\np{1+\wakefct(x-\theta y)}
&\geq 1+2\snorm{\zeta}\snorm{\theta y}+\wakefct(x-\theta y)
\geq 1+\wakefct(x).
\end{aligned}
\]
In particular, this yields 
$\snorm{x}^\alpha\np{1+\wakefct(x)}^\beta
\leq C \snorm{x-\theta y}^\alpha\np{1+\wakefct(x-\theta y)}^\beta$.
Since $\supp\hat f\subset B_R$, 
we thus obtain
\[
\begin{aligned}
\snorml{\nb{\phi\ast_G f}(t,x)}
&\leq C \int_{\Omega} \snorm{x-y}^{-\alpha}(1+\wakefct(x-y))^{-\beta}\,\Bp{\int_\torus\snorm{\hat f(s,y)}^{p'}\,\dd s}^{1/p'}\dd y \\
&\leq C 
\snorm{x}^{-\alpha}(1+\wakefct(x))^{-\beta}
M.
\end{aligned}
\]
Furthermore, we use the fundamental theorem of calculus to deduce
\[
\begin{aligned}
\snorml{\nb{\psi\ast_G f}(t,x)&-\bb{\psi(\cdot,x) \ast_{\mathbb T}} \Lambda (t)}
\\
&=\snormL{\int_\torus\int_\Omega 
\bp{\psi(t-s,x-y)-\psi(t-s,x)}\hat f(s,y)\,\dd y \dd s}
\\
&=\snormL{\int_\torus\int_\Omega
\int_0^1
y\cdot \nabla\psi(t-s,x-\theta y)\hat f(s,y)\,\dd \theta\dd y \dd s}
\\
&\leq C R \int_{\Omega} \int_0^1 \snorm{x-\theta y}^{-\alpha}(1+\wakefct(x-\theta y))^{-\beta}\,\Bp{\int_\torus\snorm{\hat f(s,y)}^{p'}\,\dd s}^{1/p'}\dd\theta\dd y
\\
&\leq C
\snorm{x}^{-\alpha}(1+\wakefct(x))^{-\beta}
M.
\end{aligned}
\]
Similarly, from Taylor's Theorem we deduce
\[
\begin{aligned}
\snorml{\nb{\xi\ast_G f}(t,x) & -\bb{\xi(\cdot,x) \ast_\torus \Lambda (t)} + \bb{\nabla \xi(\cdot,x)\ast_\torus \Xi}(t)} \\
&= \snormL{
\int_{\mathbb T} \int_\Omega \int_0^1 (1-\theta) (y\otimes y):\nabla^2 \xi(x-\theta y) \hat f(s,y)\,\dd\theta \dd y \dd s 
}
\\
&\leq C R^2\int_{\Omega}\int_0^1 \snorm{x-\theta y}^{-\alpha}(1+\wakefct(x-\theta y))^{-\beta} \Bp{\int_\torus\snorm{\hat f(s,y)}^{p'}\,\dd s}^{1/p'} \,\dd\theta\dd y
\\
&\leq C\snorm{x}^{-\alpha}(1+\wakefct(x))^{-\beta}
M,
\end{aligned}
\]
which completes the proof.
\end{proof}

\section{Main results}
\label{sec:mainresults}

In what follows, we always assume that $0\in\R^3\setminus\overline{\Omega}$ and $\zeta\neq 0$.
To formulate suitable integrability properties of solutions, we recall the 
projections $\proj$ and $\projcompl$,
which decompose a function $w\in L^1_\loc(\torus\times\Omega)$ 
into a \emph{steady-state part} $w_0$ and a 
\emph{purely periodic part} $w_\perp$:
\[
w_0(x)\coloneqq
\proj w(x):=\int_\torus w(t,x)\,\dd t,
\qquad
w_\perp(t,x)\coloneqq 
\projcompl w(t,x):= w(t,x)-\proj w(x).
\]

Firstly, we consider weak solutions to~\eqref{periodicproblem}. Since we are dealing with incompressible flows, we will take test functions in $C^\infty_{0,\sigma}(\torus\times\Omega)$, which is the subspace of   $C^\infty_{0}(\torus\times\Omega)^3$ constituted by divergence-free functions (with respect to the space variable).

Assuming $f\in L^2(\torus ; D^{-1,2}(\Omega)^3)$
and $\vvel_b\in L^\infty(\torus; W^{1/2,2}(\Sigma)^3)$, a function $\vvel\in L^1_\loc(\torus\times\Omega)^3$ 
is called a \emph{weak solution} to \eqref{periodicproblem}
if
\begin{enumerate}
\item[i.]
$\nabla\vvel\in L^2(\torus\times\Omega)^{3\times3}$, 
$\proj\vvel\in L^6(\Omega)^3$,
$\projcompl\vvel\in L^{\infty}(\torus;L^2(\Omega)^3)$,
\item[ii.]
$\nabla \cdot \vvel=0$ in $\torus\times\Omega$,
$\vvel=\vvel_b$ on $\torus\times\Sigma$,
\item[iii.]
the identity
\[
\int_\torus\int_\Omega\bb{-\vvel\cdot\partial_t\varphi
+\nabla\vvel:\nabla\varphi
-\zeta\cdot\nabla\vvel\cdot\varphi
+\np{\vvel\cdot\nabla\vvel}\cdot\varphi}\,\dd x\dd t
=\int_\torus\int_\Omega f\cdot\varphi\,\dd x \dd t
\]
holds for all $\varphi\in C^\infty_{0,\sigma}(\torus\times\Omega)$.
\end{enumerate}

For the derivation of the asymptotic decay rates of $v$ and $p$,
we have to ensure increased regularity of weak solutions.
To this end, we additionally assume one of the following conditions:
\begin{align}
\exists\, \kappa,\rho \in (1,\infty) \text{ with }
\frac{2}{\rho}+\frac{3}{\kappa}<1 :  &\quad
\projcompl\vvel\in L^{\rho}(\torus;L^{\kappa}(\Omega)^3),
\label{eq:reg.fct}
\\
\exists\, \kappa,\rho \in (1,\infty) \text{ with }
\frac{2}{\rho}+\frac{3}{\kappa}<2 : &\quad
\nabla\projcompl\vvel\in L^{\rho}(\torus;L^{\kappa}(\Omega)^{3\times 3}).
\label{eq:reg.grad}
\end{align}
Both assumptions increase the regularity of weak solutions
as was shown in~\cite{Eiter23}.

Now, suppose that the exterior domain $\Omega\subset\R^3$ has a  $C^2$-boundary. 
To quantify the regularity of the data,
recall the trace classes $T_{r,q}(\torus\times\Sigma)$ defined in~\eqref{eq:tracespace}.
\begin{theorem}\label{thm:regularity}
Let $f\in L^{r}(\torus;L^q(\Omega)^3)$ 
and $\vvel_b\in T_{r,q}(\torus\times\Sigma)$ for all $q,r\in(1,\infty)$.
Let $\vvel$ be a
weak time-periodic solution to \eqref{periodicproblem}
that satisfies
\eqref{eq:reg.fct} or \eqref{eq:reg.grad}.
Then
\begin{align}
\forall s_2\in(1,\infty), 
\, s_1 & \in(\frac{4}{3},\infty],
\, s_0\in(2,\infty]: \ \
\proj\vvel\in D^{2,s_2}(\Omega)^3\cap D^{1,s_1}(\Omega)^3\cap L^{s_0}(\Omega)^3,\ 
\label{eq:reg.ss}
\\
\forall q,r & \in(1,\infty): \ \
\projcompl\vvel\in W^{1,r}(\torus;L^q(\Omega)^3)\cap L^r(\torus;W^{2,q}(\Omega)^3),
\label{eq:reg.pp}
\end{align}
and there exists a pressure field $\vpres\in L^1_\loc(\torus\times\Omega)$ 
such that
\begin{equation}\label{eq:reg.pres}
\forall s_2\in(1,\infty):\
\proj\vpres\in D^{1,s_2}(\Omega),
\qquad
\forall q,r\in(1,\infty): \ \projcompl\vpres\in L^r(\torus;D^{1,q}(\Omega))
\end{equation}
and \eqref{periodicproblem} 
is satisfied in the strong sense.
\end{theorem}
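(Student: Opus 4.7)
My plan is to exploit the decomposition $\vvel=\proj\vvel+\projcompl\vvel$ to split \eqref{periodicproblem} into two linear sub-problems driven by the convective term, and then run a bootstrap based on maximal regularity. Averaging in time yields a steady Oseen problem for $\vvels\coloneqq\proj\vvel$ with right-hand side $\proj f-\proj(\vvel\cdot\nabla\vvel)$ and boundary data $\proj\vvel_b$; subtracting that average gives a time-periodic Oseen problem with vanishing time mean for $\vvelp\coloneqq\projcompl\vvel$, forced by $\projcompl f-\projcompl(\vvel\cdot\nabla\vvel)$ and with boundary values $\projcompl\vvel_b$. Solvability and $\LS{r}(\torus;\LS{q})$-maximal regularity for each of these linear problems in the exterior domain are available: the steady case from Galdi's theory \cite{G,Farwig_statextOseenNSE_92}, and the time-periodic case from \cite{Ky16,EiterKyed17}. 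The exponent thresholds $s_0>2$ and $s_1>4/3$ in \eqref{eq:reg.ss} reflect the sharp integrability of $\fsolvelss$ and $\nabla\fsolvelss$, which by \eqref{eq:decay.fsolvelss} belong to $\LS{s_0}$ only for $s_0>2$ and to $\LS{s_1}$ only for $s_1>4/3$ on account of the anisotropic wake decay.

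The core of the argument is the bootstrap on the nonlinear term. I would expand
\[
\vvel\cdot\nabla\vvel
=\vvels\cdot\nabla\vvels+\vvels\cdot\nabla\vvelp+\vvelp\cdot\nabla\vvels+\vvelp\cdot\nabla\vvelp,
\]
and estimate each summand by H\"older's inequality. The weak-solution axioms already supply $\nabla\vvel\in\LS{2}(\torus\times\Omega)$, $\vvels\in\LS{6}(\Omega)$ and, via Sobolev in space, $\vvelp\in\LS{\infty}(\torus;\LS{2})\cap\LS{2}(\torus;\LS{6})$. Under either Serrin-type hypothesis \eqref{eq:reg.fct} or \eqref{eq:reg.grad}, the quadratic term lies in some $\LS{r_1}(\torus;\LS{q_1})$ with $q_1,r_1>1$ \emph{strictly}. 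Inserting this forcing into the linear maximal regularity theory produces a first improvement on $\vvels$ and $\vvelp$; repeating the cycle, combined with Sobolev embedding in space and time, and with the smoothness supplied by $\vvel_b\in T_{r,q}$, extends the admissible exponent range at each round and, after finitely many iterations, exhausts all $q,r\in(1,\infty)$, yielding \eqref{eq:reg.ss} and \eqref{eq:reg.pp}. The pressure is then reconstructed via the de Rham lemma applied to the momentum balance in $\Omega$: $\proj\vpres$ solves the steady problem with the mapping properties of $\nabla\fsolpres$ governing its regularity, and $\projcompl\vpres$ is recovered from the purely periodic Oseen system; in both cases the bounds in \eqref{eq:reg.pres} follow from the corresponding Helmholtz decomposition, and $(\vvel,\vpres)$ then satisfies \eqref{periodicproblem} pointwise a.e.

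The principal obstacle is the very first step of this bootstrap. Without \eqref{eq:reg.fct} or \eqref{eq:reg.grad}, the term $\vvelp\cdot\nabla\vvelp$ only sits in $\LS{1}(\torus;\LS{3/2}(\Omega))$, which is precisely the scaling-critical space at which the linear Oseen theory does not produce a genuinely stronger solution; one must verify that each of the two Serrin-type gains indeed crosses this threshold. Propagating the improvement consistently across the $\proj/\projcompl$ splitting, where the nonlinearity couples both components and the projections do not commute with pointwise products in a norm-preserving way, is the most delicate bookkeeping. The conclusion is exactly the regularity statement established in \cite{Eiter23}, to which the detailed execution of the iteration can be referred.
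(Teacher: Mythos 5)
Your proposal is, in substance, the paper's own proof: the paper disposes of this theorem in three lines by citing \cite{Eiter23}, and your outline is a broadly accurate reconstruction of the decomposition-plus-bootstrap argument carried out there, ending with the same deferral to that reference for the detailed iteration. Two points deserve correction. First, the exterior-domain $\LS{r}(\torus;\LS{q})$ maximal regularity with inhomogeneous boundary data that powers the iteration is the result of \cite{EiterKyedShibata23}, not of \cite{Ky16,EiterKyed17}, which concern the whole-space/fundamental-solution setting; the steady-state Oseen theory from \cite{G,Farwig_statextOseenNSE_92} you invoke is the right ingredient for the $\proj$-part. Second, the only substantive content of the paper's proof is a point you do not address: \cite{Eiter23} states the regularity result for $\vvel_b\in C(\torus;C^2(\Sigma)^3)\cap C^1(\torus;C(\Sigma)^3)$, whereas the theorem here assumes only $\vvel_b\in T_{r,q}(\torus\times\Sigma)$, so the cited statement does not literally cover the hypotheses; the paper justifies the generalization by observing that the proof in \cite{Eiter23} derives its boundary-data requirements from the linear maximal regularity of \cite{EiterKyedShibata23}, which is already formulated for the trace class $T_{r,q}$. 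Your sketch happens to run the machinery directly with $T_{r,q}$ data, so it is consistent with this, but since you defer the execution to \cite{Eiter23} you should make the extension of the boundary-data class explicit. Otherwise your account of the exponent thresholds in \eqref{eq:reg.ss}, of the role of \eqref{eq:reg.fct}/\eqref{eq:reg.grad} in escaping the critical space $\LS{1}(\torus;\LS{3/2}(\Omega))$, and of the pressure recovery is in line with the strategy underlying the cited result.
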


\begin{proof}
The result was shown in~\cite{Eiter23} for $\vvel_b\in C\np{\torus;C^2(\Sigma)^3}\cap C^1\np{\torus; C(\Sigma)^3}$.
In the proof given there,
the regularity assumptions on the data are derived 
from the time-periodic maximal regularity results in~\cite{EiterKyedShibata23}
for the linearized system,
where boundary data in the class $T_{r,q}(\torus\times\Sigma)$ were studied.
This allows for the stated generalization.
\end{proof}

In this framework, we can derive (implicit) representation formulas 
for time-periodic weak solutions to~\eqref{periodicproblem}. 
In particular, we deduce two different representation formulas
for the velocity field.

\begin{theorem}
\label{thm:repr.nonlin}
Let $f\in L^{r}(\torus;L^q(\Omega)^3)$ 
and $\vvel_b\in T_{r,q}(\torus\times\Sigma)$ for all $q,r\in(1,\infty)$.
Let $\vvel$ be a
weak time-periodic solution to \eqref{periodicproblem}
that satisfies
\eqref{eq:reg.fct} or \eqref{eq:reg.grad}.
Then
\begin{equation}
\begin{aligned}
v(t,x)  
&=  \int_{{\mathbb T}\times \Omega} \fsolvel(t-s,x-y) \nb{ f(s,y)-\vvel(s,y)\cdot\nabla\vvel(s,y)} \,\dd s \dd y
\\
&\qquad 
+ \int_{{\mathbb T}\times \Sigma} \fsolvel(t-s,x-y) \left[ \TT(v,p)(s,y)n (y) + (\zeta \cdot n(y)) v_b(s,y) \right] \,\dd s \dd S(y)
\\
&\qquad
+ \int_{{\mathbb T} \times \Sigma} v_b(t,y)  \cdot  \left[ \DD ( \fsolvel \mathsf{e}_i)(t-s,x-y) n(y) \right] \,\dd s \dd S(y) \,\mathsf{e}_i
\\
&\qquad
 - \int_{\Sigma}  \left[ v_b(t,y) \cdot n(y) \right] {\mathrm P}(x-y) \,\dd S(y),
\end{aligned}
\label{eq:repr.v}
\end{equation}
\begin{equation}
\begin{aligned}
v(t,x)  
&=  \int_{{\mathbb T}\times \Omega} \fsolvel(t-s,x-y) f(s,y) \,\dd s \dd y
\\
& \qquad - \int_{{\mathbb T} \times \Omega} \left[ \nabla ( \fsolvel \mathsf{e}_i )(t-s,x-y) \right] :  \vvel(s,y) \otimes \vvel(s,y)  \,\dd s \dd y  \,\mathsf{e}_i 
\\
&\qquad 
+ \int_{{\mathbb T}\times \Sigma} \fsolvel(t-s,x-y) \left[ \TT(v,p)(s,y)n (y) + (\zeta \cdot n(y)) v_b(s,y) \right] \,\dd s \dd S(y)
\\
&\qquad 
- \int_{{\mathbb T}\times \Sigma} \fsolvel(t-s,x-y) (v_b(s,y) \cdot n(y)) v_b(s,y) \,\dd s \dd S(y)
\\
&\qquad
+ \int_{{\mathbb T} \times \Sigma} v_b(t,y)  \cdot  \left[ \DD ( \fsolvel \mathsf{e}_i ) (t-s,x-y)n(y) \right] \,\dd s \dd S(y) \,\mathsf{e}_i
\\
&\qquad
 - \int_{\Sigma}  \left[ v_b(t,y) \cdot n(y) \right] {\mathrm P}(x-y) \,\dd S(y)
\end{aligned}
\label{eq:repr.v2}
\end{equation}
for all  $(t,x) \in {\mathbb T} \times \Omega$,
and the pressure field $\vpres$, associated to $\vvel$ by Theorem~\ref{thm:regularity},
satisfies
\begin{equation}
\begin{aligned} 
p(t,x)  
&= \vpres_\infty(t) + \int_{\Omega}  {\mathrm P}(x-y) \cdot  \nb{ f(t,y)-\vvel(t,y)\cdot\nabla\vvel(t,y)} \,\dd y
\\ 
&\qquad
+\int_{\Sigma}  {\mathrm P}(x-y) \cdot 
\TT(v,p)(t,y)n(y)\,\dd S(y) 
\\
&\qquad
+ \int_{\Sigma}  {\mathrm P}(x-y) \cdot \bb{
\np{\zeta \cdot n(y)} v_{b}(t,y) +  \np{v_{b}(t,y) \cdot n(y)}\zeta} \,\dd S(y) 
\\
&\qquad
+ \int_\Sigma 2 \nu \,v_{b}(t,y) \cdot \nabla  {\mathrm P}(x-y) n(y) \,\dd S(y) 
\\
&\qquad
+ \int_{\Sigma}  {\mathrm E}(x-y) \left[ \partial_t v_{b}(t,y) \cdot n(y) \right] \,\dd S(y)
\end{aligned}
\label{eq:repr.p}
\end{equation}
for all $(t,x) \in {\mathbb T} \times \Omega$
and some function $p_\infty\in\LS{1}(\torus)$.
\end{theorem}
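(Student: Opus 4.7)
The strategy is to reduce \eqref{periodicproblem} to the linear time-periodic Oseen system by absorbing the convective term into the right-hand side. Setting $F\coloneqq f-\vvel\cdot\nabla \vvel$, the pair $(\vvel,\vpres)$ solves the linear Oseen problem with body force $F$, boundary datum $\vvel_b$, and the regularity provided by Theorem~\ref{thm:regularity}. Indeed, from \eqref{eq:reg.ss}--\eqref{eq:reg.pres} the source $F$ lies in $\LS{r}(\torus;\LS{q}(\Omega)^3)$ for suitable $r,q$ (Sobolev embedding gives $\vvel\in \LS{\infty}$ in space away from $\Sigma$, while $\nabla\vvel\in \LS{r}(\torus;\LS{q})$ for all finite $r,q$), so we may apply the linear integral representation formulas for velocity and pressure that are proved in Section~\ref{sec:linearproblem}. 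Direct substitution of $F$ into the linear velocity representation then yields~\eqref{eq:repr.v}; substitution into the linear pressure representation — using $\mathrm Q=\delta_\torus\otimes\mathrm P$ from \eqref{pressfs}, which collapses the time convolution and produces the spatial Poisson-type integral against $\mathrm P(x-y)$ — yields \eqref{eq:repr.p}, with the indeterminate $\vpres_\infty(t)$ reflecting that $\vpres$ is determined by the Navier-Stokes equations only up to a function of time (and $\vpres_\infty\in\LS{1}(\torus)$ is inherited from $\vpres\in\LS{1}_\loc$).

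To pass from \eqref{eq:repr.v} to \eqref{eq:repr.v2}, I would integrate by parts in the volume contribution of the convective term. Since $\nabla\cdot\vvel=0$, write $\vvel\cdot\nabla \vvel=\nabla_y\cdot(\vvel\otimes \vvel)$ and move the $y$-derivative onto $\fsolvel(t-s,x-y)$; using $\nabla_y[\fsolvel(t-s,x-y)]=-\nabla_x[\fsolvel(t-s,x-y)]$, this produces the component-wise volume term $-\int_{\torus\times\Omega}\nabla(\fsolvel\mathsf{e}_i)(t-s,x-y):\vvel(s,y)\otimes\vvel(s,y)\,\dd s\dd y\,\mathsf{e}_i$ that appears in~\eqref{eq:repr.v2}, together with a boundary contribution on $\Sigma$ which, upon inserting $\vvel|_{\torus\times\Sigma}=\vvel_b$, gives exactly the extra surface integral $-\int_{\torus\times\Sigma}\fsolvel(t-s,x-y)(\vvel_b\cdot n)(s,y)\vvel_b(s,y)\,\dd s\dd S(y)$. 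All other boundary and volume terms in~\eqref{eq:repr.v} are unchanged.

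The actual technical heart therefore lies in establishing the linear representation formulas themselves, which is precisely the content of Section~\ref{sec:linearproblem}. The standard route is to fix $(t,x)\in\torus\times\Omega$, truncate to $\Omega_{R,\varepsilon}\coloneqq(\Omega\cap B_R)\setminus\overline{B_\varepsilon(x)}$, pair the Oseen equations against $\fsolvel(t-s,x-\cdot)$, and apply a Green-type identity in $(s,y)$ built from the defining relation~\eqref{periodicfundsol} and its formal adjoint. The passage $\varepsilon\to 0$ recovers $\vvel(t,x)$ from the $\II\delta_G$ contribution, the integral on $\Sigma$ produces the stress combination $\TT(\vvel,\vpres)n+(\zeta\cdot n)\vvel_b$ (coming from the symmetric stress together with the Oseen convective term) and the double-layer kernel $\DD(\fsolvel\mathsf{e}_i)n$, and the limit $R\to\infty$ removes the integral over $\partial B_R$. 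Here the main obstacle is showing uniform vanishing of the $\partial B_R$ contributions despite the anisotropic decay of $\fsolvelss$: one uses the splitting $\fsolvel=1_\torus\otimes\fsolvelss+\fsolvelpp$ together with the pointwise bound~\eqref{eq:decay.fsolvelss}, the homogeneous bound~\eqref{eq:decay.fsolvelpp}, and the convolution estimates of Lemmas~\ref{lem:conv.fsolvelss}--\ref{lem:conv.decay} combined with the spatial decay and $\LS{q}$-integrability of $\vvel$, $\nabla\vvel$ and $\vpres$ supplied by Theorem~\ref{thm:regularity}. The pressure formula is handled analogously, where $\mathrm Q=\delta_\torus\otimes\mathrm P$ with $\mathrm P=-\nabla\mathrm E$ reduces the Green identity to the classical Poisson representation at fixed time $t$, and the term $\int_\Sigma \mathrm E(x-y)[\partial_t\vvel_b\cdot n]\,\dd S(y)$ arises from pairing $\mathrm E$ against $\partial_tv$ and integrating by parts on $\Sigma$.
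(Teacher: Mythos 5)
Your reduction of the nonlinear problem to the linear one is exactly the paper's proof of this theorem: set ${\mathds F}=\vvel\otimes\vvel$ (equivalently $F=f-\vvel\cdot\nabla\vvel$), note that Theorem~\ref{thm:regularity} gives $\vvel\cdot\nabla\vvel\in\LS{q}(\torus\times\Omega)^3$, and invoke the linear representation formulas; your integration by parts of the convective volume term, producing the extra surface integral $-\int_{\torus\times\Sigma}\fsolvel\,(\vvel_b\cdot n)\vvel_b$, correctly accounts for the difference between \eqref{eq:repr.v} and \eqref{eq:repr.v2}. Where you genuinely diverge from the paper is in how the linear formulas themselves are obtained. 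You propose the classical route: truncate to $(\Omega\cap B_R)\setminus\overline{B_\varepsilon(x)}$, apply a Green-type identity against $\fsolvel(t-\cdot,x-\cdot)$, recover $\vvel(t,x)$ from the $\II\delta_G$ contribution as $\varepsilon\to0$, and kill the $\partial B_R$ integrals as $R\to\infty$. The paper instead extends $(\vvel,\vpres)$ by zero to $G=\torus\times\R^3$, computes the distributional equations satisfied by the extensions (which produce the single- and double-layer densities on $\Sigma$ as Dirac-type forcings, and a nonzero divergence $g=-(\vvel_b\cdot n)\delta_\Sigma$ corrected by subtracting $\fsolpres\ast_{\R^3}g$), and then identifies the solution with the convolution $\fsolvel\ast_G F$ by a Fourier-based uniqueness lemma for the whole-space time-periodic problem. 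The paper's route buys one a clean way to bypass the two technical points your plan must still supply: (i) the $\varepsilon\to0$ limit requires control of the local singularity of $\fsolvelpp$ at $x=0$, which is not covered by the estimates quoted in the paper (Lemma~\ref{lem:decay.fsolvelpp} is only for $\snorm{x}\geq\varepsilon$) and is more delicate than for $\fsolvelss$; and (ii) the $\partial B_R$ terms can only be discarded along a subsequence via the $\LS{q}$-integrability from Theorem~\ref{thm:regularity}, since no pointwise decay is available at this stage — you gesture at this correctly, but it should be made explicit. Your route, in exchange, is self-contained at the level of the exterior domain and does not require the whole-space uniqueness result or the distributional bookkeeping of surface measures. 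Both are viable; neither step in your plan is wrong, but the Green-identity version would need the local behavior of $\fsolvelpp$ near its singularity spelled out to be complete.
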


Based on these representation formulas 
and the decay properties of fundamental solutions, 
we will derive asymptotic expansions for $(v,p)$, in which the two functions 
\begin{equation}
\Phi(t):=\int_{\Sigma} v_{b}(t,y) \cdot n(y)\,\dd S(y), \qquad \Psi(t):=\int_{\Sigma} v_{b}(t,y) \cdot n(y)y\,\dd S(y)
\label{PhiPsi}
\end{equation}
will appear. The scalar function $\Phi$ denotes the total flux of the flow through the boundary~$\Sigma$. To specify the decay of the remainder terms, 
we use the function $\log_+$ defined in~\eqref{log+}. 

\begin{theorem}
\label{thm:asymp.nonlin}
Let $f$ and $\vvel_b$ be as in Theorem~\ref{thm:repr.nonlin} 
with $\supp f$ compact. 
Let $\vvel$ be a weak solution to~\eqref{periodicproblem}
that satisfies~\eqref{eq:reg.fct} or~\eqref{eq:reg.grad}. 
Then the velocity field $\vvel$ satisfies
\begin{equation}
\begin{split}
\vvel(t,x)
&= \bb{\fsolvel (\cdot,x) \ast_\torus {\mathcal F}}(t) 
+  \Phi(t) {\mathrm P}(x)  -  \Psi(t)\cdot  \nabla  {\mathrm P}(x) 
+ {\mathcal R}(t,x),
\end{split}
\label{eq:asexp.vel}
\end{equation}
where 
\[
\begin{aligned}
\calF(t)
&\coloneqq
\int_{\Omega} f(t,y)\,\dd y 
+ \int_{\Sigma}\left[ \TT(v,p)(t,y)n (y) + \bp{\zeta \cdot n(y)-\vvel_b(t,y)\cdot n(y)}\vvel_b(t,y)\right]\dd S(y),
\end{aligned}
\]
and there exists $C>0$ such that 
\begin{align}
\snorm{\mathcal R_0(x)}
&\leq C \bb{\snorm{x}\bp{1+\wakefct(x)}}^{-3/2}\log_+\snorm{x},
\label{eq:rem.vs}
\\
\snorm{\nabla\mathcal R_0(x)}
&\leq C \bb{\snorm{x}\bp{1+\wakefct(x)}}^{-2}\log_+\Bp{\frac{\snorm{x}}{1+\wakefct(x)}},
\label{eq:rem.dvs}
\\
\snorm{\mathcal R_\perp(t,x)}
&\leq C\snorm{x}^{-3},
\label{eq:rem.vp}
\\
\snorm{\nabla\mathcal R_\perp(t,x)}
&\leq C\snorm{x}^{-7/2}\np{1+\wakefct(x)}^{-1/2},
\label{eq:rem.dvp}
\end{align}
for all $(t,x)\in\torus\times\Omega$.
The pressure field $\vpres$,
associated to $\vvel$ by Theorem~\ref{thm:regularity},
satisfies
\begin{equation}
p(t,x) = \vpres_\infty(t) + \Phi'(t) \mathrm E(x) + \bb{{\mathcal F}(t) + \zeta \Phi(t)+\Psi'(t)} \cdot  {\mathrm P}(x)  + {\mathfrak R}(t,x)
\label{eq:asexp.pres}
\end{equation}
for some $p_\infty\in\LS{1}(\torus)$,
and
there exists $C>0$ such that
\begin{equation}
\snorm{\mathfrak R(t,x)}
\leq C\snorm{x}^{-2}
\min\setl{1,\np{1+\wakefct(x)}^{-2}\log_+\snorm{x}
+\snorm{x}^{-1}\log_+\snorm{x}
+\np{1+\wakefct(x)}^{-1}}
\label{eq:rem.p}
\end{equation}
for all $(t,x)\in\torus\times\Omega$ with $\snorm{x}$ sufficiently large.
\end{theorem}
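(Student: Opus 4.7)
The plan is to insert the representation formulas from Theorem~\ref{thm:repr.nonlin} into the target decompositions and to extract the leading-order terms by Taylor expansion of the kernels, controlling each remainder with the convolution lemmas collected in Section~\ref{sec:notationsfsol}. The whole argument rests on first establishing sharp pointwise decay estimates for $\vvel$ and $\nabla\vvel$. I would obtain these by a short bootstrap: starting from the regularity given by Theorem~\ref{thm:regularity}, insert a crude decay bound into~\eqref{eq:repr.v2}, apply Lemmas~\ref{lem:conv.fsolvelss} and~\ref{lem:conv.fsolvelpp} to the nonlinear convolution $\nabla(\fsolvel\mathsf{e}_i)\ast(\vvel\otimes\vvel)$, and apply Lemma~\ref{lem:conv.decay} to the compactly supported source and boundary terms. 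Two or three iterations should yield for $\proj\vvel$ the anisotropic decay of $\fsolvelss$ and for $\projcompl\vvel$ the homogeneous decay of $\fsolvelpp$, which is exactly what is needed to handle the nonlinearity in the representation formulas.

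For the velocity expansion~\eqref{eq:asexp.vel} I would work from~\eqref{eq:repr.v2}. The volume integral $\int\fsolvel f$ and the three boundary integrals whose kernel is $\fsolvel$ itself (stress, Oseen, and $-(\vvel_b\cdot n)\vvel_b$) are each treated by the $\psi$-statement of Lemma~\ref{lem:conv.decay} applied with $\psi=\fsolvel$: the leading contribution is $[\fsolvel(\cdot,x)\ast_\torus\Lambda](t)$ for the corresponding integrated density $\Lambda$, and summing the four $\Lambda$'s reproduces exactly $\calF(t)$. The boundary integral with kernel $\DD(\fsolvel\mathsf{e}_i)n$ already involves one derivative of $\fsolvel$ and so belongs to the remainder by~\eqref{eq:decay.fsolvelss} and Lemma~\ref{lem:decay.fsolvelpp}. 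The nonlinear volume integral $\int\nabla(\fsolvel\mathsf{e}_i):\vvel\otimes\vvel$ is estimated directly by Lemmas~\ref{lem:conv.fsolvelss} and~\ref{lem:conv.fsolvelpp} together with the pointwise decay of $\vvel\otimes\vvel$ from the bootstrap. Finally the $\mathrm P$-boundary integral $-\int_\Sigma(\vvel_b\cdot n)\mathrm P(x-y)\,dS$ is expanded by Taylor's theorem (the $\xi=\mathrm P$ case of Lemma~\ref{lem:conv.decay}) and produces the $\Phi(t)\mathrm P(x)$ and $\Psi(t)\cdot\nabla\mathrm P(x)$ terms, with a second-order remainder of order $|\nabla^{2}\mathrm P(x)|=O(|x|^{-4})$. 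Splitting the resulting identity under $\proj$ and $\projcompl$ and recombining the estimates yields~\eqref{eq:rem.vs}--\eqref{eq:rem.dvp}; the logarithmic factor in~\eqref{eq:rem.vs} is the critical case $A+\min\{1,B\}=3$ of Lemma~\ref{lem:conv.fsolvelss}(iii).

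For the pressure expansion~\eqref{eq:asexp.pres} I would apply~\eqref{eq:repr.p}. After integrating by parts the nonlinear volume term (using $\nabla\cdot\vvel=0$) to convert $\mathrm P\ast(\vvel\cdot\nabla\vvel)$ into $\nabla\mathrm P\ast(\vvel\otimes\vvel)$ plus a boundary correction $\int_\Sigma\mathrm P(x-y)\cdot(\vvel_b\cdot n)\vvel_b\,dS$, the boundary integrals with kernel $\mathrm P(x-y)$ are expanded to leading order as $\mathrm P(x)\cdot(\text{integrated density})$ by Lemma~\ref{lem:conv.decay}. Assembling the pieces containing $\TT(\vvel,\vpres)n$, $(\zeta\cdot n)\vvel_b$, $-(\vvel_b\cdot n)\vvel_b$, and $(\vvel_b\cdot n)\zeta$, one recognizes the combination $[\calF(t)+\zeta\Phi(t)]\cdot\mathrm P(x)$. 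The leading $\Phi'(t)\mathrm E(x)$ arises from the zeroth-order expansion of the $\mathrm E(x-y)[\partial_t\vvel_b\cdot n]$ boundary integral, and the additional $\Psi'(t)\cdot\mathrm P(x)$ from its first-order expansion together with $\mathrm P=-\nabla\mathrm E$. The $\nabla\mathrm P$-boundary integral contributes a remainder of order $|x|^{-3}$ by~\eqref{eq:decay.fsolpres}. The remaining nonlinear tail, localized away from the singularity of $\nabla\mathrm P$ at the origin, is handled by Lemma~\ref{lem:conv.fsolpres} using the bootstrap decay of $\vvel\otimes\vvel$; this produces precisely the three terms appearing under the minimum in~\eqref{eq:rem.p}.

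The main obstacle will be the nonlinear convolution with $\nabla\fsolvelss$ entering the steady-state remainder: to reach the decay claimed in~\eqref{eq:rem.dvs} one needs $\vvel\otimes\vvel$ to fall into the case of Lemma~\ref{lem:conv.fsolvelss} with $A+\min\{1,B\}\geq 3$ and $A+B\geq 7/2$, which is exactly why the bootstrap of the first step must reach the sharp anisotropic decay of $\vvel$ and why the logarithmic correction in~\eqref{eq:rem.vs} is unavoidable at the critical threshold. Once this sharp decay is in place, everything else reduces to bookkeeping of Taylor expansions and systematic application of the convolution lemmas.
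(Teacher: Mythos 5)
Your overall strategy coincides with the paper's: split the representation formula into linear and nonlinear contributions, extract the leading terms of the compactly supported pieces via Lemma~\ref{lem:conv.decay}, control the nonlinear convolutions by Lemmas~\ref{lem:conv.fsolvelss} and~\ref{lem:conv.fsolvelpp} after a decay bootstrap, and assemble the pressure from the analogues of these steps. However, there is a genuine gap at the entry point of your bootstrap. You propose to ``insert a crude decay bound'' obtained ``from the regularity given by Theorem~\ref{thm:regularity}'', but that theorem yields only $L^p$-integrability (and hence boundedness) of $\vvel$ and $\nabla\vvel$ --- no pointwise spatial decay whatsoever. The convolution lemmas you intend to apply to $\vvel\otimes\vvel$ require a quantitative pointwise decay hypothesis ($A\geq 2$ in Lemma~\ref{lem:conv.fsolvelss}, a positive rate in Lemma~\ref{lem:conv.fsolvelpp}); with mere boundedness they return nothing, so the iteration never starts. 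The paper's actual entry point is of a different nature: the decay of the Dirichlet integral, $\int_\torus\int_{\Omega\setminus B_R}\snorm{\nabla\vvel}^2\,\dd x\dd t\leq CR^{-1+\varepsilon}$ (Lemma~\ref{lem:gradvel.decay}, taken from Galdi--Kyed), which is fed into the representation with the kernel acting on $\vvel\cdot\nabla\vvel$ via H\"older's inequality and a splitting of the convolution domain. Only after this first, non-pointwise step does one obtain $\snorm{\vvels(x)}\leq C\snorm{x}^{-1+\varepsilon}$ and a first bound on $\vvelp$, from which your pointwise iteration can take over. You need to supply this initial step (or an equivalent source of initial decay); it does not follow from the tools you list.

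A second, smaller gap concerns the gradient remainders \eqref{eq:rem.dvs} and \eqref{eq:rem.dvp}. Differentiating the nonlinear volume integral places a second derivative on the fundamental solution, and $\nabla^2\fsolvelss$ is not locally integrable near its singularity, so neither Lemma~\ref{lem:conv.fsolvelss} (which covers only $\fsolvelss$ and $\nabla\fsolvelss$) nor a formal differentiation under the integral applies. The paper handles this by a cutoff decomposition near the singularity, integration by parts in the far field, and a critical anisotropic convolution estimate for $\bb{\snorm{\cdot}(1+\wakefct)}^{-2}\ast\bb{\snorm{\cdot}(1+\wakefct)}^{-2}$ (quoted from Eiter's earlier work), which is precisely where the factor $\log_+\bp{\snorm{x}/(1+\wakefct(x))}$ in \eqref{eq:rem.dvs} originates --- it is not the critical case (iii) of Lemma~\ref{lem:conv.fsolvelss}, which only produces the $\log_+\snorm{x}$ of \eqref{eq:rem.vs}. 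Your phrase ``recombining the estimates'' hides this entire argument; as written, the gradient estimates are not justified.
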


Theorem~\ref{thm:asymp.nonlin} yields new insights into the 
decay properties of the velocity and pressure fields.
First of all,
due to the 
decomposition \eqref{fsveldec} of the velocity component of the fundamental solution, we obtain separate asymptotic expansions for 
the steady-state and purely periodic parts of $\vvel$.

\begin{cor}\label{cor:asymp.nonlin.split}
In the situation of Theorem~\ref{thm:asymp.nonlin},
the velocity field $\vvel=\vvels+\vvelp$ satisfies
\begin{align}
\vvels(x)
&= \fsolvelss (x) {\mathcal F_0} 
+ \Phi_0 {\mathrm P}(x)  
+ {\mathcal R}_0(t,x),
\label{eq:asexp.vels}
\\
\vvelp(t,x)
&= \bb{\fsolvelpp (\cdot,x) \ast_\torus {\mathcal F_\perp}}(t) 
+ \Phi_\perp(t) {\mathrm P}(x) - \Psi_\perp(t)\cdot \nabla {\mathrm P}(x) 
+ {\mathcal R}_\perp(t,x)
\label{eq:asexp.velp}
\end{align}
for all $(t,x)\in\torus\times\Omega$,
where
$\mathcal R=\mathcal R_0+\mathcal R_\perp$ satisfies~\eqref{eq:rem.vs}--\eqref{eq:rem.dvp}.
\end{cor}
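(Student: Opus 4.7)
The plan is to derive~\eqref{eq:asexp.vels} and~\eqref{eq:asexp.velp} from the expansion~\eqref{eq:asexp.vel} of Theorem~\ref{thm:asymp.nonlin} by applying the projections $\proj$ and $\projcompl$ and invoking the splitting $\fsolvel=1_\torus\otimes\fsolvelss+\fsolvelpp$ from~\eqref{fsveldec}.

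The only nontrivial step is to decompose the convolution $\bb{\fsolvel(\cdot,x)\ast_\torus\calF}(t)$. Splitting the kernel via~\eqref{fsveldec}, the contribution coming from $1_\torus\otimes\fsolvelss$ is constant in $t$ and equal to $\fsolvelss(x)\calF_0$, since convolution of a $t$-independent kernel with $\calF$ reproduces its temporal mean. The contribution coming from $\fsolvelpp$ is, by contrast, purely periodic in $t$: because $\fsolvelpp=\projcompl\fsolvel$ has vanishing temporal mean, so does every convolution against it; moreover $\fsolvelpp\ast_\torus\calF_0=0$, so $\fsolvelpp\ast_\torus\calF=\fsolvelpp\ast_\torus\calF_\perp$. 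Consequently $\proj$ extracts $\fsolvelss(x)\calF_0$ while $\projcompl$ extracts $\bb{\fsolvelpp(\cdot,x)\ast_\torus\calF_\perp}(t)$. The other terms in~\eqref{eq:asexp.vel} behave trivially under the projections: the scalar factors $\Phi(t)$ and $\Psi(t)$ split into their steady and purely periodic parts, and $\calR=\proj\calR+\projcompl\calR$. Setting $\calR_\perp\coloneqq\projcompl\calR$ yields~\eqref{eq:asexp.velp} with the bounds~\eqref{eq:rem.vp}--\eqref{eq:rem.dvp} inherited directly from Theorem~\ref{thm:asymp.nonlin}.

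The only subtle point is that~\eqref{eq:asexp.vels} does not display the $-\Psi_0\cdot\nabla\mathrm P(x)$ contribution produced by $\proj$; this term has to be absorbed into the steady-state remainder by setting $\calR_0\coloneqq\proj\calR-\Psi_0\cdot\nabla\mathrm P$. I would verify the bounds~\eqref{eq:rem.vs}--\eqref{eq:rem.dvs} for this redefined remainder by combining~\eqref{eq:decay.fsolpres}, which gives $\snorm{\Psi_0\cdot\nabla\mathrm P(x)}\leq C\snorm{x}^{-3}$ and $\snorm{\Psi_0\cdot\nabla^2\mathrm P(x)}\leq C\snorm{x}^{-4}$, with the elementary inequality $1+\wakefct(x)\leq C\snorm{x}$ valid for $\snorm{x}$ large; together with $\log_+\geq 1$, these estimates readily dominate the absorbed terms by the right-hand sides of~\eqref{eq:rem.vs} and~\eqref{eq:rem.dvs}, uniformly inside and outside the wake region. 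This absorption check is the only nontrivial ingredient; everything else is a mechanical application of the $\proj/\projcompl$ decomposition.
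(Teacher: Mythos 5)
Your proposal is correct and follows essentially the same route as the paper: split the convolution via the decomposition \eqref{fsveldec}, apply $\proj$ and $\projcompl$, and absorb the leftover $-\Psi_0\cdot\nabla\mathrm P(x)=O(\snorm{x}^{-3})$ term into the steady-state remainder $\mathcal R_0$, which is admissible since $1+\wakefct(x)\leq C\snorm{x}$ makes $\snorm{x}^{-3}$ dominated by the right-hand sides of \eqref{eq:rem.vs} and \eqref{eq:rem.dvs}. The paper's proof is a condensed version of exactly this argument.
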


Combining the decay properties of the fundamental solutions 
with those of the (faster decaying) remainder terms, 
we deduce the following result.

\begin{cor}
\label{cor:decay.general}
In the situation of Theorem~\ref{thm:asymp.nonlin},
there exists $C>0$ such that
the velocity field $\vvel=\vvels+\vvelp$ and the pressure $\vpres$ satisfy
\begin{align}
\snorm{\vvels(x)}
&\leq C\bb{\snorm{x}\bp{1+\wakefct(x)}}^{-1},
&
\snorm{\nabla\vvels(x)}
&\leq C\bb{\snorm{x}\bp{1+\wakefct(x)}}^{-3/2},
\label{eq:decay.general.vs}
\\
\snorm{\vvelp(t,x)}
&\leq C\snorm{x}^{-2},
&
\snorm{\nabla\vvelp(t,x)}
&\leq C\snorm{x}^{-3},
\label{eq:decay.general.dp}
\\
\snorm{\vpres(t,x)-\vpres_\infty(t)}
&\leq C\snorm{x}^{-1}
\label{eq:decay.general.p}
\end{align}
for all $(t,x)\in\torus\times\Omega$
with $\snorm{x}$ sufficiently large.
\end{cor}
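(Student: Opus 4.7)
The strategy is to deduce each estimate as a direct consequence of the asymptotic expansions in Corollary~\ref{cor:asymp.nonlin.split} and Theorem~\ref{thm:asymp.nonlin}, bounding the leading fundamental-solution terms with the pointwise estimates collected in Section~\ref{sec:notationsfsol} and then absorbing the remainders. No genuinely new technical machinery is required; the proof amounts to careful bookkeeping of decay rates.

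For the steady-state velocity I would start from~\eqref{eq:asexp.vels}. The Oseen term $\fsolvelss(x)\calF_0$ is bounded by $C[|x|(1+\wakefct(x))]^{-1}$ via~\eqref{eq:decay.fsolvelss}; the contribution $\Phi_0\,\mathrm P(x)$ is $O(|x|^{-2})$ by~\eqref{eq:decay.fsolpres}, and since $1+\wakefct(x)\le C(1+|x|)$ one has $|x|^{-2}\le C[|x|(1+\wakefct(x))]^{-1}$ for $|x|$ large; the remainder~\eqref{eq:rem.vs} is subleading because the extra factor $[|x|(1+\wakefct(x))]^{-1/2}$ dominates $\log_+|x|$ for $|x|$ large. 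The same reasoning applied to the gradient, via $|\nabla\fsolvelss(x)|\le C[|x|(1+\wakefct(x))]^{-3/2}$, $|\nabla\mathrm P(x)|\le C|x|^{-3}$, and~\eqref{eq:rem.dvs} (observing that $\log_+(|x|/(1+\wakefct(x)))\le\log_+|x|$ grows more slowly than $[|x|(1+\wakefct(x))]^{1/2}$), yields the bound for $\nabla\vvels$.

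For the purely periodic velocity I combine~\eqref{eq:asexp.velp} with H\"older's inequality in time and Lemma~\ref{lem:decay.fsolvelpp}: for any $q\in(1,\infty)$,
\[
\snorml{[\fsolvelpp(\cdot,x)\ast_\torus\calF_\perp](t)}\le C|x|^{-3},
\qquad
\snorml{[\nabla\fsolvelpp(\cdot,x)\ast_\torus\calF_\perp](t)}\le C|x|^{-4}.
\]
The dominant term in~\eqref{eq:asexp.velp} is then $\Phi_\perp(t)\,\mathrm P(x)=O(|x|^{-2})$, with the $\Psi_\perp\cdot\nabla\mathrm P$ contribution and the remainder from~\eqref{eq:rem.vp} both of order $|x|^{-3}$; for the gradient, the leading term becomes $\Phi_\perp(t)\nabla\mathrm P(x)=O(|x|^{-3})$, and~\eqref{eq:rem.dvp} is already $O(|x|^{-7/2})$, which is dominated. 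The pressure bound follows directly from~\eqref{eq:asexp.pres}: the harmonic term $\Phi'(t)\,\mathrm E(x)$ produces the leading $O(|x|^{-1})$ behaviour, the Oseen-pressure term $[\calF(t)+\zeta\Phi(t)+\Psi'(t)]\cdot\mathrm P(x)$ is $O(|x|^{-2})$, and the remainder $\mathfrak R$ is controlled by $C|x|^{-2}$ using the trivial alternative in the minimum in~\eqref{eq:rem.p}.

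The only point requiring care, and the mildest of obstacles, is to ensure that the time-dependent coefficients $\Phi$, $\Psi$, $\calF$ and the derivatives $\Phi'$, $\Psi'$ are essentially bounded in $t$, so that the $x$-decay estimates hold uniformly in $t$ as the statement demands. This rests on the regularity of $\vvel_b$ guaranteed by $\vvel_b\in T_{r,q}(\torus\times\Sigma)$ for all $r,q\in(1,\infty)$ and on the strong regularity of $(\vvel,\vpres)$ provided by Theorem~\ref{thm:regularity}; combined with the Sobolev embedding $W^{1,r}(\torus)\hookrightarrow L^\infty(\torus)$ for $r>1$ applied in the time variable, this yields the required $L^\infty(\torus)$ control of the boundary and volume integrals defining $\Phi$, $\Psi$, $\calF$, $\Phi'$ and $\Psi'$, completing the proof.
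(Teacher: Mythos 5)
Your proposal is correct and follows essentially the same route as the paper: the paper's own proof states that the bounds are direct consequences of the asymptotic expansions \eqref{eq:asexp.vels}, \eqref{eq:asexp.velp} and \eqref{eq:asexp.pres} combined with the fundamental-solution decay \eqref{eq:decay.fsolvelss}, \eqref{eq:decay.fsolvelpp} and \eqref{eq:decay.fsolpres} (and notes the estimates were in any case already obtained while proving Theorem~\ref{thm:asymp.nonlin}). Your bookkeeping of which term is leading in each case, and your remark on the $L^\infty(\torus)$ control of $\Phi$, $\Psi$, $\calF$ and their derivatives, is a slightly more explicit version of what the paper leaves implicit.
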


The asymptotic properties of $\vvel$ and $\nabla\vvel$ were already studied in~\cite{Eiter21}
in the case of time-periodic flow in the whole space $\Omega=\R^3$.
Comparing these decay rates 
with those given in Corollary~\ref{cor:decay.general},
we observe that the steady-state parts $\vvels$ and $\nabla\vvels$ decay at the same rate,
while $\vvelp$ and $\nabla\vvelp$ decay faster when $\Omega=\R^3$.
This difference is due to the presence of the boundary,
more precisely, the time-dependent boundary data.
If the total flux $\Phi$ defined in~\eqref{PhiPsi} is time-independent,
that is, if $\Phi\equiv\Phi_0$,
then the asymptotic expansion simplifies and the decay rates for the purely periodic velocity field 
and the pressure increase as follows.

\begin{theorem}
\label{thm:constflux}
In the situation of Theorem~\ref{thm:asymp.nonlin},
assume $\partial_t\Phi \equiv  0$, that is, $\Phi \equiv \Phi_0$.
Then $\vvel$ and $\vpres$ satisfy
\begin{align}
\vvel(t,x)
&= \bb{\fsolvel (\cdot,x) \ast_\torus {\mathcal F}}(t) 
+ {\mathrm P}(x) \Phi_0 -\nabla {\mathrm P}(x) \Psi(t) 
+ {\mathcal R}(t,x),
\label{eq:asexp.vel.constflux}
\\
p(t,x) 
&= \vpres_\infty(t) + \bb{{\mathcal F}(t) + \Phi_0 \zeta  + \Psi'(t)} \cdot  {\mathrm P}(x)  + {\mathfrak R}(t,x)
\label{eq:asexp.pres.constflux}
\end{align}
for some $p_\infty\in\LS{1}(\torus)$
and for $\mathcal F$ as in Theorem~\ref{thm:asymp.nonlin}.
Moreover, $\mathcal R_0$ satisfies~\eqref{eq:rem.vs} and \eqref{eq:rem.dvs},
$\mathcal R_\perp$ satisfies
\begin{align}
\snorm{\mathcal R_\perp(t,x)}
&\leq C\snorm{x}^{-4},
\label{eq:rem.vp.constflux}
\\
\snorm{\nabla\mathcal R_\perp(t,x)}
&\leq C\snorm{x}^{-4},
\label{eq:rem.dvp.constflux}
\end{align}
 and
$\mathfrak R$ is subject to~\eqref{eq:rem.p}.
Additionally, $\nabla\vvelp$ has the asymptotic expansion
\begin{equation}
\partial_j\vvelp(t,x)
=\bb{\partial_j\fsolvelpp (\cdot,x) \ast_\torus \mathcal F_{\perp}^{\mathrm{lin}}}(t) 
-  \Psi_\perp(t)  \partial_j\nabla {\mathrm P}(x)
+ {\mathcal R}_\perp^{\partial_j}(t,x)
\label{eq:asexp.dvp.constflux}
\end{equation}
with
\[
\begin{aligned}
\mathcal F^{\mathrm{lin}}(t)
&\coloneqq \int_{\Omega} f(t,y)\,\dd y 
+ \int_{\Sigma}\left[ \TT(v,p)(t,y)n (y) + \bp{\zeta \cdot n(y)}\vvel_b(t,y)\right]\dd S(y),
\\
\snorml{{\mathcal R}_\perp^{\partial_j}(t,x)}
&\leq C\snorm{x}^{-9/2}\np{1+\wakefct(x)}^{-3/2}
\end{aligned}
\]
for $j=1,2,3$  and all $(t,x)\in\torus\times\Omega$.
In particular, 
there is $C>0$ such that
\begin{align}
\snorm{\vvels(x)}
&\leq C\bb{\snorm{x}\bp{1+\wakefct(x)}}^{-1},
&
\snorm{\nabla\vvels(x)}
&\leq C\bb{\snorm{x}\bp{1+\wakefct(x)}}^{-3/2},
\label{eq:decay.constflux.vs}
\\
\snorm{\vvelp(t,x)}
&\leq C\snorm{x}^{-3},
&
\snorm{\nabla\vvelp(t,x)}
&\leq C\snorm{x}^{-4},
\label{eq:decay.constflux.dp}
\\
\snorm{\vpres(t,x)-\vpres_\infty(t)}
&\leq C\snorm{x}^{-2}
\label{eq:decay.constflux.p}
\end{align}
for all $(t,x)\in\torus\times\Omega$
with $\snorm{x}$ sufficiently large.
\end{theorem}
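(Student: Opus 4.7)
The proof refines Theorem~\ref{thm:asymp.nonlin} under the hypothesis $\partial_t\Phi\equiv 0$, which is equivalent to $\Phi\equiv\Phi_0$ and $\Phi_\perp\equiv 0$. The expansions~\eqref{eq:asexp.vel.constflux} and~\eqref{eq:asexp.pres.constflux} follow at once by substitution into~\eqref{eq:asexp.vel} and~\eqref{eq:asexp.pres}: in the velocity formula, the coefficient of $\mathrm P(x)$ becomes the constant $\Phi_0$; in the pressure formula, the $\Phi'(t)\mathrm E(x)$ term vanishes identically. The bounds on $\mathcal R_0$ (equations~\eqref{eq:rem.vs} and~\eqref{eq:rem.dvs}) and on $\mathfrak R$ (equation~\eqref{eq:rem.p}) carry over unchanged, since neither argument relied on time-dependence of the flux.

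For the improved bound~\eqref{eq:rem.vp.constflux} on $\mathcal R_\perp$ I would revisit the proof of~\eqref{eq:rem.vp}. Applying Lemma~\ref{lem:conv.decay} with $\xi=\mathrm P$ to the boundary integral $-\int_\Sigma(\vvel_b\cdot n)(t,y)\,\mathrm P(x-y)\,\dd S(y)$ in~\eqref{eq:repr.v} extracts the leading pair $-\Phi(t)\mathrm P(x)+\Psi(t)\cdot\nabla\mathrm P(x)$ and leaves a Taylor remainder of order $|\nabla^2\mathrm P|\sim|x|^{-4}$. The purely periodic projection of the extracted pair equals $-\Phi_\perp(t)\mathrm P(x)+\Psi_\perp(t)\cdot\nabla\mathrm P(x)$; the first summand is precisely the source of the $|x|^{-3}$ limiting rate in the general case, and it vanishes identically under the constant-flux hypothesis. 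The remaining $\fsolvelpp$-type convolutions in~\eqref{eq:repr.v} are handled analogously: since $\|\nabla\fsolvelpp(\cdot,x)\|_{\LS{1}(\torus)}\lesssim|x|^{-4}$ by Lemma~\ref{lem:decay.fsolvelpp}, the associated first-order Taylor remainders are also of order $|x|^{-4}$. Differentiating the expansion once more gives~\eqref{eq:rem.dvp.constflux}, using $|\nabla^3\mathrm P|\sim|x|^{-5}$ and the gradient decay of $\nabla^2\fsolvelpp$ to control the corresponding tails. Together with~\eqref{eq:asexp.velp} these estimates imply the bound $|\vvelp(t,x)|\leq C|x|^{-3}$, which is needed in the next step.

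To derive the gradient expansion~\eqref{eq:asexp.dvp.constflux}, I would differentiate~\eqref{eq:repr.v} in $x_j$ and apply Lemma~\ref{lem:conv.decay} to each resulting integral \emph{except} the nonlinear volume contribution $\partial_j\fsolvel\ast_G(\vvel\cdot\nabla\vvel)$, whose integrand lacks compact support. The leading terms from $\partial_j\fsolvel\ast f$ (volume) and from $\partial_j\fsolvel\ast[\TT(\vvel,\vpres)n+(\zeta\cdot n)\vvel_b]$ (boundary) combine precisely into $\partial_j\fsolvel(\cdot,x)\ast_\torus\mathcal F^{\mathrm{lin}}(t)$, while the $\mathrm P$-boundary integral contributes $-\Phi(t)\partial_j\mathrm P(x)+\Psi(t)\cdot\partial_j\nabla\mathrm P(x)$ plus an $O(|x|^{-5})$ Taylor tail; projection onto the purely periodic part together with $\Phi_\perp\equiv 0$ then produces~\eqref{eq:asexp.dvp.constflux}. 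The nontrivial remaining task is to bound the nonlinear contribution $\partial_j\fsolvelpp\ast_G(\vvel\cdot\nabla\vvel)_\perp$. Writing $(\vvel\cdot\nabla\vvel)_\perp=\vvels\cdot\nabla\vvelp+\vvelp\cdot\nabla\vvels+(\vvelp\cdot\nabla\vvelp)_\perp$ and invoking the just-proved bound $|\vvelp|\leq C|x|^{-3}$ together with the Oseen rates $|\vvels(x)|\leq C[|x|(1+\wakefct(x))]^{-1}$ and $|\nabla\vvels(x)|\leq C[|x|(1+\wakefct(x))]^{-3/2}$ from Corollary~\ref{cor:decay.general}, the slowest decaying factor is $\vvelp\cdot\nabla\vvels$, whose rate is $|x|^{-9/2}(1+\wakefct(x))^{-3/2}$; anisotropic convolution estimates in the spirit of Lemma~\ref{lem:conv.fsolvelss} preserve this rate under $\partial_j\fsolvelpp\ast_G$, yielding the claimed bound on $\mathcal R_\perp^{\partial_j}$.

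Finally, the pointwise rates~\eqref{eq:decay.constflux.vs}--\eqref{eq:decay.constflux.p} follow by inserting the decay of $\fsolvelss$, $\fsolvelpp$, $\mathrm P$ and $\nabla\mathrm P$ into the established expansions and combining with the sharpened remainder bounds. I expect the main obstacle to be the sharp estimate of the nonlinear contribution in $\mathcal R_\perp^{\partial_j}$: tracking the anisotropic rate $|x|^{-9/2}(1+\wakefct(x))^{-3/2}$ through a convolution with $\nabla\fsolvelpp$ is not covered directly by Lemma~\ref{lem:conv.fsolvelpp}, since that result is purely isotropic, and must be established by an anisotropic variant modeled on Lemma~\ref{lem:conv.fsolvelss}.
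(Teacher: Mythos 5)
Your overall strategy coincides with the paper's: set $\Phi_\perp\equiv 0$ in the general expansions, deduce the improved decay $\snorm{\vvelp}\leq C\snorm{x}^{-3}$, and feed it back into the nonlinear terms. However, your argument for \eqref{eq:rem.vp.constflux} misattributes the mechanism of the improvement and leaves a gap. The $\snorm{x}^{-3}$ barrier for $\mathcal R_\perp$ in the general case does not come from the boundary integral or from Taylor remainders of compactly supported sources (those are already $O(\snorm{x}^{-4})$ in Theorem~\ref{thm:asymp.nonlin}); it comes from the nonlinear \emph{volume} convolution $\nabla\fsolvelpp\ast_G\bb{\chi_\Omega\projcompl(\vvel\otimes\vvel)}$, whose source decays only like $\snorm{x}^{-3}$ because $\vvels\otimes\vvelp\sim\snorm{x}^{-1}\cdot\snorm{x}^{-2}$. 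Lemma~\ref{lem:conv.decay} and the Taylor-remainder reasoning are inapplicable there (no compact support), and your paragraph never explains how this term improves to $\snorm{x}^{-4}$. The correct order — which the paper follows — is: first conclude $\snorm{\vvelp}\leq C\snorm{x}^{-3}$ directly from the \emph{general} expansion \eqref{eq:asexp.velp} with $\Phi_\perp=0$ and the general bound \eqref{eq:rem.vp}; then $\snorm{\projcompl(\vvel\otimes\vvel)}\leq C\snorm{x}^{-4}$, and Lemma~\ref{lem:conv.fsolvelpp} with $A=4$ (equivalently Theorem~\ref{thm:asymp.linF}) gives \eqref{eq:rem.vp.constflux}. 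You obtain the $\snorm{x}^{-3}$ bound on $\vvelp$ only \emph{after} claiming the improved remainder, so as written the argument is circular.

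For $\mathcal R_\perp^{\partial_j}$ you correctly identify the dominant source $\vvelp\cdot\nabla\vvels\sim\snorm{x}^{-9/2}(1+\wakefct(x))^{-3/2}$, but you leave the anisotropic convolution estimate as an open task, proposing a new anisotropic analogue of Lemma~\ref{lem:conv.fsolvelpp}. No such new lemma is needed: the paper splits the convolution with a cutoff $\chi$ into a near-field part $I$, where $\chi\partial_j\fsolvel$ has compact support so Lemma~\ref{lem:conv.decay} transfers the source's anisotropic rate unchanged, and a far-field part $J_\perp$, which is estimated directly via $\snorm{\partial_j\fsolvelpp}\lesssim\snorm{x-y}^{-4}$ and the elementary bound $\int_{\Omega\setminus B_2(x)}\snorm{x-y}^{-4}\snorm{y}^{-9/2}(1+\wakefct(y))^{-3/2}\,\dd y\leq C\snorm{x}^{-9/2}(1+\wakefct(x))^{-3/2}$. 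You should supply this (or an equivalent) splitting; without it the claimed bound on $\mathcal R_\perp^{\partial_j}$, and hence \eqref{eq:asexp.dvp.constflux}, is not established.
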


For constant total flux,
we thus obtain the same decay rates for the velocity field as in the case $\Omega=\R^3$ (without boundary),
see~\cite{Eiter21}.
Moreover, the steady-state part and the purely periodic part of the velocity field 
and the respective gradients as well as the pressure
decay with the same rate as the corresponding fundamental solutions.

Although the decay of
the term $\nabla\mathcal R_\perp$ given in \eqref{eq:rem.dvp.constflux}
is faster than that in~\eqref{eq:rem.dvp},
it is of the same order as the decay of $\nabla\fsolvelpp$.
Therefore, \eqref{eq:asexp.vel.constflux}
does not yield a proper asymptotic expansion for $\nabla\vvelp$
since a leading-order term is not identified.
Instead, if one considers only the linear boundary contributions that appear in
$\mathcal F_\perp^{\mathrm{lin}}$ for the leading term,
then one arrives at~\eqref{eq:asexp.dvp.constflux},
which properly identifies 
the leading term of an asymptotic expansion for $\nabla\vvelp$.

Obviously, 
the assumption of constant total flux is also satisfied 
if we consider a time-independent forcing $f$ and boundary data $\vvel_b$.
In this case we have $\vvel(t,x)=\vvels(x)$,
and from Theorem~\ref{thm:constflux}
we rediscover the well-known asymptotic expansion and decay estimates for
the velocity field associated to steady flow past a body,
see~\cite{Finn59,Finn,Babenko,G}.

In the previous theorems, the assumption that $\snorm{x}$ is large 
is actually not necessary for the decay estimates of the velocity field $\vvel$,
its gradient $\nabla\vvel$
and the corresponding remainders
since these are bounded due to 
the regularity from~\eqref{eq:reg.ss} and~\eqref{eq:reg.pp}
and the embedding theorem from~\cite[Theorem 3]{Eiter23}. 
However, bondedness with respect to time does not follow for the pressure $\vpres$,
so that the assumption cannot be omitted completely.

The proofs of these results will be provided in Section~\ref{sec:proofs}.

\section{The linearized time-periodic problem}
\label{sec:linearproblem}

In the present section, we derive representation formulas and study the asymptotic behavior of the linearized time-periodic problem that is obtained from the Navier-Stokes equations when  the convective term $v \cdot \nabla v = \nabla \cdot (v \otimes v)$ is neglected or replaced with $\nabla \cdot {\mathds F}$ for a tensor field $\mathds F$. At this stage, ${\mathds F}$ is know and satisfies suitable summability properties.

\subsection{Representation formulas for solutions}

Assume $\Omega\subset\R^3$ is an exterior domain
with $C^2$-boundary. For the whole section, let  $f \in L^r({\mathbb T};L^q(\Omega)^3)$, ${\mathds F} \in L^r({\mathbb T};W^{1,q}(\Omega)^{3 \times 3})$ and $\vvel_b\in T_{r,q}(\torus\times\Sigma)$ 
for some $q\in(1,2)$ and $r \in (1,\infty)$,
and consider a solution $(v,p)$ to the (linear) time-periodic Oseen problem 
\begin{equation}
\left\{
\begin{aligned}
\partial_t v - \nu \Delta v +  \nabla p - \zeta \cdot \nabla v &= f - \nabla \cdot {\mathds F} &&  \textrm{in  } {\mathbb T} \times \Omega,   \\
\nabla \cdot v &= 0 &&  \textrm{in  } {\mathbb T} \times \Omega,  \\
v &= v_b &&  \textrm{on  }  {\mathbb T} \times \Sigma,   \\
\lim_{|x|\rightarrow \infty} v(t,x)&=0 &&   \textrm{for  } t \in {\mathbb T} .
\end{aligned}
\right.
\label{periodicproblemlin}
\end{equation}
Note that the existence of a solution $(\vvel,\vpres)$ with $\vvel=\vvels+\vvelp$ and
\[
\begin{aligned}
\vvels&\in D^{2,q}(\Omega)^3\cap L^{2q/(2-q)}(\Omega)^3,
\\
\vvelp&\in W^{1,r}(\torus;L^q(\Omega)^3)\cap L^r(\torus;W^{2,q}(\Omega)^3),
\\
\vpres&\in L^r(\torus;D^{1,q}(\Omega))
\end{aligned}
\]
was shown in~\cite[Theorem 4.7]{EiterShibata24}. 
Moreover, the velocity field $\vvel$ is unique in this class, 
while the pressure $\vpres$ is unique up to addition with a space-independent function.

In order to derive the integral representation formulas for the velocity and pressure, 
we first transform the exterior problem \eqref{periodicproblemlin} to a whole-space problem. To do so, we extend the velocity and pressure to ${\mathbb R}^3\setminus \overline{\Omega}$ in the following way:
\begin{equation}
\widetilde{v}(t,x):= \begin{cases} 0 & \textrm{if } x \in {\mathbb R}^3 \setminus \overline{\Omega},  \\ v(t,x) & \textrm{if } x \in \overline{\Omega}, \end{cases}   \qquad \widetilde{p}(t,x):= \begin{cases} 0 & \textrm{if } x \in {\mathbb R}^3 \setminus \overline{\Omega},  \\ p(t,x) & \textrm{if } x \in \overline \Omega. \end{cases} 
\label{extended}
\end{equation}
In a first step we assume ${\mathds F} \equiv 0$. The next lemma is proved along the same lines as in the steady case \cite{anaJMFM}.

\begin{lemma}
Under the above assumptions on the data
and if $\mathds F\equiv 0$, the extended fields \eqref{extended} satisfy  
\begin{equation}
\left\{\begin{aligned}
\partial_t \widetilde{v} - \nu \Delta \widetilde{v} +  \nabla \widetilde{p} - \zeta \cdot \nabla \widetilde{v} 
&= F + \nu \nabla g 
&& \text{in  } G,  
\\
\nabla \cdot \widetilde{v} 
&= g  
&&  \text{in  } G, 
\\
\lim_{|x|\rightarrow \infty} \widetilde{v}(t,x)
&=0 
&&  \text{for } t \in {\mathbb T},
\end{aligned}\right.
\label{lambdazero}
\end{equation}
where
\begin{equation}
g  : =  - (v_b \cdot n) \delta_{\Sigma} , \label{eq:newforceg}  
\end{equation}
\begin{equation}
\label{defF}
F  : =   \,  \chi_\Omega  f 
  + \TT(v,p)n \delta_{\Sigma} + (\zeta \cdot n)  v_b  \delta_{\Sigma}  + \nu \nabla \cdot \left[(n \otimes v_b  +  v_b \otimes n     )\delta_{\Sigma} \right].
\end{equation}
\end{lemma}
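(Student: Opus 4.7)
The plan is to verify the two PDEs in~\eqref{lambdazero} distributionally on $G=\torus\times\R^3$, by testing against $\varphi\in\CSci(G)$; the decay of $\widetilde{\vvel}$ at spatial infinity is immediate from that of $\vvel$. Thanks to the regularity of the linear solution $(\vvel,\vpres)$ recalled from~\cite{EiterShibata24}, all boundary traces of $\vvel$, $\nabla\vvel$ and $\vpres$ on $\torus\times\Sigma$ are well defined and integration by parts on $\torus\times\Omega$ is fully justified. For the continuity equation one directly integrates by parts in $\Omega$: since $\widetilde{\vvel}$ vanishes outside $\Omega$ and $\nabla\cdot\vvel=0$ in $\Omega$ with $\vvel|_\Sigma=\vvel_b$, one finds
\[
\langle \nabla\cdot\widetilde{\vvel},\varphi\rangle
=-\int_\torus\int_\Omega \vvel\cdot\nabla\varphi\,\dd x\,\dd t
=-\int_\torus\int_\Sigma (\vvel_b\cdot n)\varphi\,\dd S\,\dd t
=\langle g,\varphi\rangle,
\]
giving $\nabla\cdot\widetilde{\vvel}=g$.

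For the momentum equation, the cleanest route is via the Cauchy stress tensor. A one-line integration by parts against a scalar test function yields the distributional identity $\partial_j\widetilde{\vvel}_i=\chi_\Omega\partial_j\vvel_i-\vvel_{b,i}n_j\delta_\Sigma$. Symmetrizing in $(i,j)$ to form $\DD(\widetilde{\vvel})$ and recalling $\widetilde{\vpres}=\chi_\Omega\vpres$ gives
\[
\TT(\widetilde{\vvel},\widetilde{\vpres}) = \chi_\Omega\TT(\vvel,\vpres)-\nu\bp{n\otimes\vvel_b+\vvel_b\otimes n}\delta_\Sigma.
\]
A further integration by parts produces $\nabla\cdot\bp{\chi_\Omega\TT(\vvel,\vpres)}=\chi_\Omega\nabla\cdot\TT(\vvel,\vpres)-\TT(\vvel,\vpres)n\,\delta_\Sigma$, and similarly one obtains $\partial_t\widetilde{\vvel}=\chi_\Omega\partial_t\vvel$ and $-(\zeta\cdot\nabla)\widetilde{\vvel}=-\chi_\Omega(\zeta\cdot\nabla)\vvel+(\zeta\cdot n)\vvel_b\delta_\Sigma$. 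Combining these identities with the PDE satisfied by $(\vvel,\vpres)$ in $\Omega$ (which with $\mathds F\equiv 0$ reads $\partial_t\vvel-\nabla\cdot\TT(\vvel,\vpres)-(\zeta\cdot\nabla)\vvel=f$) yields
\[
\partial_t\widetilde{\vvel}-\nabla\cdot\TT(\widetilde{\vvel},\widetilde{\vpres})-(\zeta\cdot\nabla)\widetilde{\vvel} = F \quad\text{in }\mathcal S'(G),
\]
with $F$ as in~\eqref{defF}. Finally, the pointwise identity $\nabla\cdot\TT(w,q)=\nu\Delta w+\nu\nabla(\nabla\cdot w)-\nabla q$ holds distributionally, and applying it to $(\widetilde{\vvel},\widetilde{\vpres})$ together with $\nabla\cdot\widetilde{\vvel}=g$ converts the preceding identity into $\partial_t\widetilde{\vvel}-\nu\Delta\widetilde{\vvel}+\nabla\widetilde{\vpres}-\zeta\cdot\nabla\widetilde{\vvel}=F+\nu\nabla g$, as claimed.

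The only delicate point is to correctly identify the surface contribution inside $\TT(\widetilde{\vvel},\widetilde{\vpres})$: the surface parts $-\vvel_{b,i}n_j\delta_\Sigma$ of the individual derivatives $\partial_j\widetilde{\vvel}_i$ represent the tensor density $-\vvel_b\otimes n$, and only after symmetrization do they combine into the $n\otimes\vvel_b+\vvel_b\otimes n$ appearing in~\eqref{defF}. Beyond this bookkeeping and the observation that $\nu\nabla g$ is generated as $\nu\nabla(\nabla\cdot\widetilde{\vvel})$ through the rewriting of the Laplacian via $\TT$, the argument reduces to the distributional divergence theorem together with the PDE; no further analytic subtleties arise, since the regularity of $(\vvel,\vpres)$ makes every surface integral absolutely convergent.
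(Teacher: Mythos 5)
Your proof is correct and follows essentially the same route as the paper: both establish the distributional jump formulas for $\nabla\widetilde v$, $\DD(\widetilde v)$, $\nabla\widetilde p$, $\partial_t\widetilde v$, pass through the stress tensor $\TT(\widetilde v,\widetilde p)$, and recover the Laplacian form via $-\nu\Delta w+\nabla q=-\nabla\cdot\TT(w,q)+\nu\nabla(\nabla\cdot w)$, which is exactly where the term $\nu\nabla g$ is generated. The only difference is cosmetic --- you assemble the momentum equation in stress-divergence form first and convert at the end, whereas the paper converts before assembling --- and your sign bookkeeping for the surface contribution $n\otimes v_b+v_b\otimes n$ is consistent with the stated $F$.
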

\begin{proof} The distributional gradients of $\widetilde{v} \in {\mathcal S}'(G)^3$ and $\widetilde{p}\in {\mathcal S}'(G)$ are given by
\begin{equation}
\nabla \widetilde{v} = \chi_\Omega \nabla v - n \otimes  v_b  \delta_{\Sigma} , \quad
\nabla \widetilde{p} = \chi_\Omega \nabla p  - p n \delta_{\Sigma},
\label{derdisttvp}
\end{equation}
and for the time derivative, we have
\begin{equation}
\partial_t \widetilde{v} =  \chi_\Omega \partial_t  v.
\label{dertime}
\end{equation}

From \eqref{derdisttvp}, we obtain $
\nabla \cdot \widetilde{v} = \chi_\Omega \nabla \cdot v  - (n \cdot  v_b) \delta_{\Sigma} =  - (v_b \cdot  n) \delta_{\Sigma}=g$ in $G$
with $g$ defined in \eqref{eq:newforceg}. Moreover,
\begin{align}
\zeta \cdot \nabla \widetilde{v} & = \chi_\Omega (\zeta \cdot \nabla v) - (\zeta \cdot n) v_b \delta_{\Sigma}, \label{derzeta} \\
2 \DD(\widetilde{v}) & = \nabla \widetilde{v} + (\nabla \widetilde{v})^\top = 2 \chi_\Omega \DD (v) - v_b \otimes n\delta_{\Sigma} - n \otimes v_b \delta_{\Sigma}, 
\end{align}
and therefore
\[
\begin{aligned}
\displaystyle \nabla \cdot \TT(\widetilde{v} ,\widetilde{p})  = 
  &  \displaystyle \, 2 \nu \nabla \cdot (\chi_\Omega \DD (v) ) - \nu \nabla \cdot \left( v_b \otimes n\delta_{\Sigma} + n \otimes v_b \delta_{\Sigma} \right) - \nabla \cdot \left( \widetilde{p} \, \II \right) \medskip \\
 = &  \displaystyle \,  \chi_\Omega (\nu \Delta v  -\nabla p ) - \TT(v,p)n \delta_{\Sigma} 
-  \nu \nabla \cdot [(n \otimes  v_b  +  v_b \otimes n )\delta_{\Sigma} ].
\end{aligned}
\]
From  $- \nu \Delta \widetilde{v} + \nabla \widetilde{p} =  - \nabla \cdot \TT(\widetilde{v} ,\widetilde{p}) + \nu \nabla (\nabla \cdot \widetilde{v})$ and the previous relations, we also get
\begin{equation}
\begin{aligned}
\displaystyle - \nu \Delta \widetilde{v} + \nabla \widetilde{p}  
 = & \displaystyle \, - \chi_\Omega (\nu \Delta v  -\nabla p ) + \TT(v,p)n \delta_{\Sigma}  + \nu \nabla g  \medskip \\
& \displaystyle \, + \nu \nabla \cdot [(n \otimes v_b -  v_b \otimes n )\delta_{\Sigma} ].
\end{aligned}
\label{derT}
\end{equation}
Adding the identities \eqref{dertime}, \eqref{derT} and \eqref{derzeta} side by side, then replacing the expression $\chi_\Omega (\partial_t v - \nu \Delta v +  \nabla p - \zeta \cdot \nabla v )$ with $\chi_\Omega  f$ yields the first equation of system \eqref{lambdazero} with $F$ given by \eqref{defF}. 
\end{proof}

Let us consider the solution $(\tilde{v},\tilde{p} )$ of whole space problem \eqref{lambdazero}. In general, $\nabla \cdot \tilde{v} = g \not = 0$ and therefore the integral representation of $(\tilde{v},\tilde{p} )$ using the fundamental solution $(\fsolvel , {\mathrm Q})$ cannot be applied directly to \eqref{lambdazero}.  In what follows, the component-wise convolution is taken over ${\mathbb R}^3$ or over $G$. Since, by  \eqref{PE},  
\[
\nabla \cdot ({\mathrm P}\ast_{{\mathbb R}^3} g) = (-\Delta {\mathrm E}) \ast_{{\mathbb R}^3}g = \delta_{{\mathbb R}^3} \ast_{{\mathbb R}^3} g = g,
\]
we will decompose the velocity component of the solution of \eqref{lambdazero} as
\[
\tilde{v} =  (\tilde{v}  -  {\mathrm P} \ast_{{\mathbb R}^3} g) +  {\mathrm P} \ast_{{\mathbb R}^3} g =: \tilde{u} +  {\mathrm P} \ast_{{\mathbb R}^3} g,
\]
where $\tilde{u}$ satisfies $\nabla \cdot \tilde{u} =0$ and 
\[
\begin{aligned}
& \partial_t \tilde{u}  - \nu \Delta \tilde{u} - \zeta \cdot \nabla \widetilde{u}    \medskip \\ 
&\qquad =  \, \partial_t \tilde{v}  - \nu \Delta \tilde{v} - \zeta \cdot \nabla \widetilde{v} -  \partial_t  ({\mathrm P} \ast_{{\mathbb R}^3} g) +  \nu \Delta  ({\mathrm P} \ast_{{\mathbb R}^3} g) +  \zeta \cdot \nabla ({\mathrm P}\ast_{{\mathbb R}^3} g)  \medskip \\
&\qquad = -\nabla \tilde{p}  + F + 2 \nu \nabla g  +  \nabla ({\mathrm P} \ast_{{\mathbb R}^3} (\zeta g)) +   \nabla({\mathrm E} \ast_{{\mathbb R}^3} (\partial_t g)).
\end{aligned}
\]

From 
\[
\left\{
\begin{aligned}
 \displaystyle \partial_t  \tilde{u} - \nu \Delta \tilde{u} - \zeta \cdot \nabla \widetilde{u}  + \nabla \left(\tilde{p}  - 2 \nu  g  - {\mathrm P} \ast_{{\mathbb R}^3}(\zeta g) - {\mathrm E} \ast_{{\mathbb R}^3} (\partial_t g)\right) & = F &&  \text{in } G, \\
 \displaystyle  \nabla \cdot \tilde{u} & = 0 && \text{in } G,
 \end{aligned}
 \right.
\]
we immediately obtain the pressure in terms of a convolution in $G$ with $ {\mathrm Q}$ (recall  \eqref{pressfs}) as
 \[
\tilde{p} -  {\mathrm P} \ast_{{\mathbb R}^3} (\zeta g) - 2 \nu  g  - {\mathrm E} \ast_{{\mathbb R}^3} (\partial_t g) 
= {\mathrm Q} \ast_{G} F +  p_\infty
= \mathrm P \ast_{\R^3} F +  p_\infty,
\]
where $\vpres_\infty\in\LSloc{1}(G)$
such that $\vpres_\infty(t,\cdot)$ is a polynomial for each $t\in\torus$.
This can readily be verified via Fourier transform, see~\cite[Lemma 5.2.5]{Eiter20_diss}.
In virtue of the decay of the convolutions and the integrability properties of $\nabla\vpres$ and the convolutions,
these polynomials must be constant, so that $ p_\infty\in \LSloc{1}(\torus)=\LS{1}(\torus)$.
Therefore,
\begin{equation}
\tilde{p} = p_\infty + {\mathrm P} \ast_{\R^3}  \bb{F +\zeta g} + {\mathrm E} \ast_{{\mathbb R}^3} (\partial_t g)  + 2 \nu  g,  \label{presssurewhole}
\end{equation}
where $g$ and $F$ are given by \eqref{eq:newforceg} and \eqref{defF}, respectively, and $
\partial_t g  =  - (\partial_t v_b \cdot n ) \delta_{\Sigma} $.

The velocity component of a solution to the time-periodic whole-space problem \eqref{lambdazero} 
is unique up to addition by a polynomial in space, 
see~\cite[Lemma 5.2.5]{Eiter20_diss},
so that the decay of the convolution and the integrability properties of $\vvel$
yield the identity $ \tilde{u}  = \fsolvel \ast_{G} F $,
that is,
\[
\begin{aligned}
 \displaystyle  \tilde{u}
   = {} &  \displaystyle  (1_{{\mathbb T}} \otimes \fsolvelss) \ast_{G}   \left[ f \chi_\Omega + \TT(v,p)n \delta_{\Sigma} + (\zeta \cdot n)  v_b \delta_{\Sigma} \right] \medskip \\  
& \displaystyle + \nu  (1_{{\mathbb T}} \otimes \fsolvelss)  \ast_{G}  \nabla \cdot \left[  (n \otimes v_b  +  v_b \otimes n  )\delta_{\Sigma} \right]  
\medskip \\
& +  \fsolvelpp \ast_{G}   \left[  f \chi_\Omega  + \TT(v,p)n \delta_{\Sigma} + (\zeta \cdot n)  v_b \delta_{\Sigma} \right] \medskip \\  
& \displaystyle +  \nu  \fsolvelpp \ast_{G}  \nabla \cdot \left[ (n \otimes v_b  +    v_b \otimes n   )\delta_{\Sigma}   \right] , 
\end{aligned}
\]
where the first two convolutions, which are time-independent, can be written as
\[
\begin{aligned}
& (1_{{\mathbb T}} \otimes \fsolvelss) \ast_{G}   \left[  f \chi_\Omega   + \TT(v,p)n \delta_{\Sigma} + (\zeta \cdot n)  v_b \delta_{\Sigma} \right] (x) \\
&\qquad =  \int_{\mathbb T}  \int_{\Omega} \fsolvelss(x-y)  f(t,y) \, \dd y \dd t  \\
&\qquad\qquad + \int_{\mathbb T} \int_{\Sigma} \fsolvelss(x-y) \left[ \TT(v,p)(t,y)n (y) + (\zeta \cdot n)(y) v_b(t,y)  \right] \, \dd S(y) \dd t \\
&\qquad =  \int_{\Omega} \fsolvelss(x-y)\left[  {\mathcal P}f(y)  \right] \, \dd y  \\
&\qquad\qquad + \int_{\Sigma} \fsolvelss(x-y) \left[ \mathcal P \TT(v,p) n (y) + (\zeta \cdot n)(y) {\mathcal P} v_b(y) \right] \, \dd S(y)   \\
&\qquad = : {\widetilde u}_0^{(1)}(x)  +  {\widetilde u}_0^{(2)}(x), 
\end{aligned}
\]
and
\[
\begin{aligned}
& \nu (1_{{\mathbb T}} \otimes \fsolvelss)  \ast_{G}  \nabla \cdot \left[(n \otimes v_b  +   v_b \otimes n     )\delta_{\Sigma} \right] (x) \\
&\qquad = - \nu \int_{\mathbb T}  \int_{\Sigma}  v_b(t,y) \cdot   \left[ \nabla_y\left( ( \fsolvelss )(x - y)\mathsf{e}_i\right) n(y)\right] \mathsf{e}_i \, \dd S(y)  \dd t \\
 & \qquad\qquad - \nu \int_{\mathbb T}  \int_{\Sigma} n(y) \cdot   \left[ \nabla_y\left( \fsolvelss(x  - y)\mathsf{e}_i\right) v_b(t,y) \right] \mathsf{e}_i \, \dd S(y)  \dd t \\
&\qquad = - \int_{\mathbb T}\int_{\Sigma} v_b(t,y) \cdot {\mathbb D} ( \fsolvelss (x-y)\mathsf{e}_i)n(y)  \, \dd S(y)  \dd t \, \mathsf{e}_i  \\
&\qquad = - \int_{\Sigma} {\mathcal P} v_b(y) \cdot {\mathbb D} ( \fsolvelss (x-y)\mathsf{e}_i)n(y)  \, \dd S(y)  \dd t \, \mathsf{e}_i   = : {\widetilde u}_0^{(3)}(x).
\end{aligned}
\]
The time-periodic terms can be written as
\[
\begin{aligned}
&\fsolvelpp \ast_{G}  \left[ f \chi_\Omega  + \TT(v,p)n \delta_{\Sigma} + (\zeta \cdot n)  v_b \delta_{\Sigma} \right] \\
&\qquad =   \int_{{\mathbb T}\times \Omega} \fsolvelpp (t-s,x-y) f(s,y) \, \dd s \dd y \\
& \qquad\qquad + \int_{{\mathbb T} \times \Sigma}  \fsolvelpp (t-s,x-y) \left[ \TT(v,p)(s,y)n (y) + (\zeta \cdot n)(y) v_b(s,y) \right]  \, \dd s \dd S(y)  \\
&\qquad = : {\widetilde u}_\perp^{(1)}(t,x)  +  {\widetilde u}_\perp^{(2)}(t,x), 
\end{aligned}
\]
and
\[
\begin{aligned}
& \nu \fsolvelpp \ast_{G}  \nabla \cdot \left[ (n \otimes v_b  +  v_b  \otimes n )\delta_{\Sigma} \right] \\
&\qquad =  -  \int_{\mathbb T} \int_{\Sigma} v_b(t,y)  \cdot {\mathbb D} ( \fsolvelpp (t-s,x-y)\mathsf{e}_i )n(y) \, \dd S(y) \dd s \, \mathsf{e}_i  = :\displaystyle {\widetilde u}_\perp^{(3)}(t,x).
\end{aligned}
\]
For the velocity $v$ in the exterior domain, the previous results yield the formula
$$
v(t,x) = \tilde{u} (t,x) -  \int_{\Sigma} v_b(t,y) \cdot n(y) {\mathrm P}(x-y) \, \dd S(y), \quad (t,x) \in {\mathbb T} \times \Omega.
$$

Concerning the pressure term, we restrict $\tilde{p}$ from~\eqref{presssurewhole} to $\Omega$ using the fact that the support of $g$ is contained in $\Sigma$.
For $(t,x) \in {\mathbb T} \times \Omega$ we thus obtain
\[
\begin{aligned}
p(t,x) 
&=  p_\infty(t) + \nb{{\mathrm P} \ast_{{\mathbb R}^3} ( F+\zeta g)}(t,x)  +  \nb{ {\mathrm E} \ast_{{\mathbb R}^3} (\partial_t g)} (t,x) \\
& = p_\infty(t) + \int_{\Omega} f(t,y) (t,y) \cdot {\mathrm P}(x-y) \dd y 
\\
& \qquad
+ \int_{\Sigma} {\mathrm P}(x-y) \cdot  \TT(v,p)(t,y)n(y) \, \dd S(y)  
+ \int_{\Sigma} \zeta \cdot n(y) {\mathrm P}(x-y) \cdot v_{b}(t,y) \, \dd S(y)   \\
& \qquad + 2 \nu \int_{\Sigma} v_{b}(t,y) \cdot \nabla {\mathrm P}(x-y) n(y) \, \dd S(y) 
+ \int_{\Sigma} \zeta \cdot {\mathrm P}(x-y) v_{b}(t,y) \cdot n(y) \, \dd S(y)    \\
& \qquad + \int_{\Sigma} {\mathrm E}(x-y) \partial_t v_{b}(t,y) \cdot n(y) \, \dd S(y)  \\
& =:   p_\infty(t) + \widetilde{p}(t,x)  + \int_{\Sigma} {\mathrm E}(x-y) \partial_t v_{b}(t,y) \cdot n(y) \, \dd S(y).
\end{aligned}
\]

With the above notations for ${\widetilde u}_0^{(i)}(x)$, $ {\widetilde u}_\perp^{(i)}(t,x)$, $i=1,2,3$, and $\widetilde{p}(t,x)$,  we thus proved:
\begin{theorem} \label{thm:representation}
In the case ${\mathds F}\equiv 0$, the solution $(v,p)$ of problem \eqref{periodicproblemlin} admits the representation 
\begin{align}
\begin{split}
v(t,x)  
& =  {\widetilde u}_0^{(1)}(x)  +  {\widetilde u}_0^{(2)}(x)  +  {\widetilde u}_0^{(3)}(x) \medskip \\ 
&  \quad +  {\widetilde u}_\perp^{(1)}(t,x)  +  {\widetilde u}_\perp^{(2)}(t,x)  +  {\widetilde u}_\perp^{(3)}(t,x) \medskip \\
&  \quad  - \int_{\Sigma}  v_b(t,y) \cdot n(y) {\mathrm P} (x-y) \,\dd S(y),
 \end{split}
\label{eq:repr.v.lin}
\\
\begin{split}
p(t,x)  &= p_\infty(t) + \widetilde{p}(t,x)  + \int_{\Sigma} {\mathrm E}(x-y) \partial_t v_{b}(t,y) \cdot n(y) \,\dd S(y)
\label{eq:repr.p.lin}
\end{split}
\end{align}
for all $(t,x)\in\torus\times\Omega$ and some $p_\infty\in\LS{1}(\torus)$.
\end{theorem}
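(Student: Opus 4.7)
The plan is to reduce the exterior problem to a whole-space problem by extending $(v,p)$ by zero outside $\Omega$, then apply the time-periodic Oseen fundamental solution on $G=\torus\times\R^3$ and carefully track the boundary contributions produced by the extension. Using the already derived system~\eqref{lambdazero} for $(\tilde v,\tilde p)$, where $F$ and $g$ are supported on $\torus\times\Sigma$, the first difficulty is that $\nabla\cdot\tilde v=g\neq0$, so the fundamental solution cannot be convolved directly. I would circumvent this by writing $\tilde v=\tilde u+\mathrm P\ast_{\R^3}g$, which is divergence-free since $\nabla\cdot(\mathrm P\ast_{\R^3}g)=-\Delta\mathrm E\ast_{\R^3}g=g$ by~\eqref{PE}. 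Substituting this decomposition into~\eqref{lambdazero} and using $\partial_t(\mathrm P\ast g)=\mathrm P\ast\partial_tg$, $-\nu\Delta(\mathrm P\ast g)=-\nu\nabla g$, and $\zeta\cdot\nabla(\mathrm P\ast g)=\nabla(\mathrm P\ast(\zeta g))$ (the latter because $\mathrm P=-\nabla\mathrm E$), one verifies that $\tilde u$ solves a divergence-free time-periodic Oseen problem on $G$ with right-hand side $F$ and effective pressure $\tilde p-2\nu g-\mathrm P\ast_{\R^3}(\zeta g)-\mathrm E\ast_{\R^3}\partial_tg$.

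Next, I would invoke the fundamental solution $(\fsolvel,\mathrm Q)$ to represent this whole-space problem. Uniqueness of the Oseen system on $G$ holds only up to polynomials in $x$ for the velocity and, via~\eqref{pressfs}, up to space-independent polynomial pressure additions; the regularity and decay of $\tilde u$ and $\nabla\tilde p$ together with the decay of the convolutions $\fsolvel\ast_G F$ and $\mathrm P\ast_{\R^3}F$ (guaranteed by Lemmas~\ref{lem:conv.fsolvelss}, \ref{lem:conv.fsolvelpp}, and~\ref{lem:conv.decay}) force these polynomials to reduce to a purely time-dependent function $p_\infty\in\LS{1}(\torus)$. This yields $\tilde u=\fsolvel\ast_G F$ and the pressure identity~\eqref{presssurewhole}. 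Splitting $\fsolvel=1_\torus\otimes\fsolvelss+\fsolvelpp$ via~\eqref{fsveldec} separates $\tilde u$ into steady and purely periodic contributions, producing the six pieces $\tilde u_0^{(i)}$, $\tilde u_\perp^{(i)}$. The contributions $\tilde u_0^{(3)}$ and $\tilde u_\perp^{(3)}$ come from convolutions against the distributional divergence $\nabla\cdot[(n\otimes v_b+v_b\otimes n)\delta_\Sigma]$, and I would integrate by parts on $\Sigma$ to expose the symmetrized gradient $\DD(\fsolvel\mathsf{e}_i)n$ that appears in the statement.

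To finish the velocity representation~\eqref{eq:repr.v.lin}, I would restrict $\tilde v=\tilde u+\mathrm P\ast_{\R^3}g$ to $\torus\times\Omega$; since $g=-(v_b\cdot n)\delta_\Sigma$, the term $\mathrm P\ast_{\R^3}g$ becomes exactly the boundary integral $-\int_\Sigma (v_b\cdot n)\mathrm P(x-y)\,\dd S(y)$. For the pressure~\eqref{eq:repr.p.lin}, I would expand $\mathrm P\ast_{\R^3}(F+\zeta g)+\mathrm E\ast_{\R^3}\partial_tg+2\nu g$ term by term: the forcing piece gives the volume integral against $\mathrm P$; the $\TT(v,p)n\delta_\Sigma$ piece gives the stress boundary integral; the $(\zeta\cdot n)v_b\delta_\Sigma$ and $\zeta g$ pieces combine into the symmetrical $\zeta$-terms; the distributional divergence term integrates by parts to yield $2\nu v_b\cdot\nabla\mathrm P(x-y)n$; finally, the $\partial_tg$ convolution with $\mathrm E$ directly provides the last boundary integral against $\mathrm E$. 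The surface term $2\nu g$ vanishes on $\Omega$ because $\supp g\subset\Sigma$ and $x\in\Omega$ is away from the boundary (or, more precisely, its restriction to $\Omega$ is zero).

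The main obstacle I anticipate is the uniqueness/integrability bookkeeping: I must ensure that when inverting the Oseen operator on $G$, the ambiguous polynomial-in-$x$ parts of the velocity reduce to zero and those of the pressure reduce to a $t$-dependent integrable function, rather than a genuine polynomial. This requires invoking the a priori regularity class in which $(v,p)$ lives (in particular $\vvels\in L^{2q/(2-q)}$ and $\nabla\vpres\in L^r(\torus;L^q(\Omega))$ from the existence theory of~\cite{EiterShibata24}), combined with the decay of the convolutions guaranteed by Lemmas~\ref{lem:conv.fsolvelss}--\ref{lem:conv.decay}. The remaining manipulations are systematic applications of the identities $\mathrm P=-\nabla\mathrm E$, distributional integration by parts on $\Sigma$, and the decomposition~\eqref{fsveldec}.
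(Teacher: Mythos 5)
Your proposal follows essentially the same route as the paper: extension by zero to the whole-space system \eqref{lambdazero}, the corrector $\tilde v=\tilde u+\mathrm P\ast_{\R^3}g$ to restore the divergence-free condition, identification of $\tilde u=\fsolvel\ast_G F$ and of the pressure via the uniqueness-up-to-polynomials argument, and finally the splitting \eqref{fsveldec} together with distributional integration by parts on $\Sigma$ to produce the terms $\tilde u_0^{(i)}$, $\tilde u_\perp^{(i)}$ and the boundary integrals. The argument is correct and no genuinely different ideas are introduced.
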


Now we address the case ${\mathds F} \not \equiv 0$. Later on, the tensor field ${\mathds F}$ will be replaced with the nonlinear term $v \otimes v$. 

\begin{lemma}
\label{lemmaF}
Consider the problem \eqref{periodicproblemlin} with $f\equiv 0$ and $v_b\equiv 0$. Under the stated assumptions on ${\mathds F}$, the extended fields $\widetilde{v}$ and  $\widetilde{p}$ defined in \eqref{extended} satisfy
\begin{equation}
\left\{
\begin{aligned}
 \partial_t \widetilde{v} - \nu \Delta \widetilde{v} +  \nabla \widetilde{p} - \zeta \cdot \nabla \widetilde{v} &= \TT(v,p)n \delta_{\Sigma}  -  \nabla \cdot  (\chi_\Omega {\mathds F})  -  (n^\top {\mathds F}) \delta_{\Sigma}  
&& \text{in  } G,   \\
 \nabla \cdot \widetilde{v} &= 0  
&&  \text{in  } G,  \\
 \lim_{|x|\rightarrow \infty} \widetilde{v}(t,x) &=0 
&&  \text{for } t \in {\mathbb T}.
\end{aligned}
\right.
\label{sistF}
\end{equation}
\end{lemma}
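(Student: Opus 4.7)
The plan is to proceed as in the proof of the previous lemma: extend $v$ and $p$ by zero to $\mathbb R^3 \setminus \overline{\Omega}$ and compute the distributional derivatives appearing on the left-hand side of \eqref{sistF} in $\mathcal S'(G)$, so that only boundary contributions supported on $\Sigma$ (and the bulk term $-\nabla\cdot(\chi_\Omega\mathds F)$) remain after cancellation of the pointwise identity on $\Omega$.

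First I would record the distributional identities. Because $\vvel_b\equiv 0$, the boundary contributions in the first-order derivatives vanish, giving $\partial_t\widetilde v=\chi_\Omega\partial_t v$, $\nabla\widetilde v=\chi_\Omega\nabla v$ and $\zeta\cdot\nabla\widetilde v=\chi_\Omega(\zeta\cdot\nabla v)$. For the Laplacian, one further differentiation introduces a trace:
\[
\Delta\widetilde v = \chi_\Omega\Delta v - (n\cdot\nabla)v\,\delta_\Sigma .
\]
For the pressure and the tensor field, which are not assumed to vanish on $\Sigma$, the standard formula yields
\[
\nabla\widetilde p=\chi_\Omega\nabla p - p\,n\,\delta_\Sigma,\qquad
\nabla\cdot(\chi_\Omega\mathds F)=\chi_\Omega\nabla\cdot\mathds F - (n^\top\mathds F)\,\delta_\Sigma.
\]
Substituting all of these into $\chi_\Omega$ times the equation satisfied pointwise on $\Omega$, namely $\chi_\Omega(\partial_t v-\nu\Delta v+\nabla p-\zeta\cdot\nabla v)=-\chi_\Omega\nabla\cdot\mathds F$, produces
\[
\partial_t\widetilde v-\nu\Delta\widetilde v+\nabla\widetilde p-\zeta\cdot\nabla\widetilde v
=\bb{\nu(n\cdot\nabla)v - p\,n}\delta_\Sigma -\nabla\cdot(\chi_\Omega\mathds F) -(n^\top\mathds F)\,\delta_\Sigma .
\]

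The delicate step is to identify the bracketed boundary term with $\TT(v,p)n$. Since $v=v_b=0$ on $\Sigma$, all tangential derivatives of $v$ vanish there, so $\nabla v|_\Sigma = n\otimes (n\cdot\nabla)v$; the incompressibility condition $\nabla\cdot v=0$ then forces the scalar $(n\cdot\nabla)v\cdot n=\mathrm{tr}(\nabla v|_\Sigma)$ to vanish on $\Sigma$. A direct computation of $2\nu\DD(v)n|_\Sigma$ using these two facts yields $2\nu\DD(v)n|_\Sigma=\nu(n\cdot\nabla)v$, and hence $\TT(v,p)n|_\Sigma = \nu(n\cdot\nabla)v-p\,n$, which is exactly the bracket above. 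This gives the first equation of \eqref{sistF}.

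Finally, the divergence condition follows from $\nabla\cdot\widetilde v=\chi_\Omega\nabla\cdot v-(v\cdot n)\delta_\Sigma=0$, using $\nabla\cdot v=0$ in $\Omega$ and $v|_\Sigma=0$; the pointwise decay at infinity is immediate from the corresponding property of $v$. I expect no genuine obstacle beyond the algebraic identification of $\TT(v,p)n$ described above; the bookkeeping of the $\delta_\Sigma$ terms is routine once the correct sign conventions for the outward normal $n$ are fixed consistently with those used in the proof for the case $\mathds F\equiv 0$.
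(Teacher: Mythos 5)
Your proof is correct, but the key step differs from the paper's. The paper never computes $\Delta\widetilde v$ directly: following the previous lemma, it writes $-\nu\Delta\widetilde v+\nabla\widetilde p=-\nabla\cdot\TT(\widetilde v,\widetilde p)+\nu\nabla(\nabla\cdot\widetilde v)$ and computes the distributional divergence of the extended stress tensor, which produces the boundary term $\TT(v,p)n\,\delta_\Sigma$ \emph{automatically}, with no need to inspect the structure of $\nabla v$ on $\Sigma$. You instead compute $\Delta\widetilde v=\chi_\Omega\Delta v-(n\cdot\nabla)v\,\delta_\Sigma$ and then must prove the pointwise trace identity $\TT(v,p)n=\nu(n\cdot\nabla)v-p\,n$ on $\Sigma$, which you do correctly via $\nabla v|_\Sigma=n\otimes(n\cdot\nabla)v$ (all tangential derivatives vanish since $v|_\Sigma=0$) and $(n\cdot\nabla)v\cdot n=\nabla\cdot v=0$. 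Both routes are valid and yield the same statement; the trade-off is that your identification is special to homogeneous boundary data, whereas the paper's stress-tensor computation is the one that also handles the previous lemma with $v_b\not\equiv0$ (there the naive Laplacian route would leave a Neumann-type boundary term that does not reduce to $\TT(v,p)n$ plus the stated tangential corrections without further work). A small caveat worth recording: the identity $\nabla v|_\Sigma=n\otimes(n\cdot\nabla)v$ is to be understood in the trace sense, which is legitimate here because $v(t,\cdot)\in W^{2,q}(\Omega)$ near $\Sigma$.
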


\begin{proof} In this case, the distributional gradients of $\widetilde{v}$ and $\widetilde{p}$ are simply given by
\[
\nabla \widetilde{v} = \chi_\Omega \nabla v, \quad
\nabla \widetilde{p} = \chi_\Omega \nabla p  - p n \delta_{\Sigma},
\]
so that from the proof of Lemma \ref{lambdazero} with $f\equiv 0$ and $v_b\equiv 0$, we get $\nabla \cdot \widetilde{v} = \chi_\Omega \nabla \cdot v = 0$ in $G$ and 
\[
\partial_t \widetilde{v} - \nu \Delta \widetilde{v} +  \nabla \widetilde{p} - \zeta \cdot \nabla \widetilde{v} = \chi_\Omega (\partial_t v - \nu \Delta v +  \nabla p - \zeta \cdot \nabla v ) +  \TT(v,p)n \delta_{\Sigma}  \text{ in  } G. 
\]
Replacing $\chi_\Omega (\partial_t v - \nu \Delta v +  \nabla p - \zeta \cdot \nabla v )$ with $- \chi_\Omega   (\nabla \cdot {\mathds F}) $ and using the relation for the distributional divergence
\[\nabla \cdot (\chi_\Omega {\mathds F})  = \chi_\Omega \nabla \cdot {\mathds F}  - (n^\top {\mathds F}) \delta_{\Sigma}\] 
yields the first equation of  \eqref{sistF}.    
\end{proof}

As before, from the whole space problem \eqref{sistF}, we derive representation formulas.
\begin{theorem} \label{thm:representationF}
If ${\mathds F} \in L^\infty({\mathbb T} \times {\mathbb R}^3)^{3 \times 3}$ and $|{\mathds F}(t,x)|\leq M(1+ |x|)^{-A}$, for some $A \geq 2$, then the solution $(v,p)$ to problem \eqref{periodicproblemlin} with $v_b\equiv 0$ and $f\equiv 0$ admits the representation 
\begin{align}
\begin{split}
v(t,x)  
= & \displaystyle  \int_{\Sigma} \fsolvelss(x-y) \int_{\mathbb T}  \left[ \TT(v,p)(t,y)n (y) - n(y)^\top {\mathds F}(t,y)  \right] \dd t \, \dd S(y)  \medskip \\
& \displaystyle  -  \int_{\Omega}  \nabla \left( \fsolvelss  \mathsf{e}_i \right)  (x  - y)  :  \int_{\mathbb T}  {\mathds F} (t,y) \dd t  \, \dd y \, \mathsf{e}_i  \medskip \\
&  \displaystyle  + \int_{\mathbb T}  \int_{\Sigma}  \fsolvelpp(t-s,x-y)  \left[ \TT(v,p)(t,y)n (y) - n(y)^\top {\mathds F}(t,y)  \right]  \, \dd S(y) \dd t \medskip \\
& \displaystyle -  \int_{\mathbb T}  \int_{\Omega}   \nabla \left(\fsolvelpp \mathsf{e}_i\right) (t-s,x  - y)  :  {\mathds F} (t,y)  \, \dd y \dd t \, \mathsf{e}_i, 
\end{split}
\label{eq:repr.v.linF}
\\
\begin{split}
p(t,x)   =  
& \displaystyle \, p_\infty(t) + \int_{\Sigma} {\mathrm P}(x-y) \cdot  \TT(v,p)n(y) \, \dd S(y)  \medskip 
- \int_{\Omega} (\nabla \cdot {\mathds F})(t,y) \cdot {\mathrm P}(x-y) \dd y  
\end{split}
\label{eq:repr.p.linF}
\end{align}
for all $(t,x)\in\torus\times\Omega$ and some $p_\infty\in\LS{1}(\torus)$.
\end{theorem}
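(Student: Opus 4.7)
The plan is to parallel the derivation of Theorem~\ref{thm:representation}, starting from the whole-space reformulation of Lemma~\ref{lemmaF}. Set
\[
H \coloneqq \TT(v,p)n\delta_\Sigma - \nabla \cdot (\chi_\Omega \mathds F) - (n^\top \mathds F)\delta_\Sigma,
\]
so that \eqref{sistF} reads $\partial_t \widetilde v - \nu \Delta \widetilde v + \nabla \widetilde p - \zeta \cdot \nabla \widetilde v = H$ with $\nabla \cdot \widetilde v = 0$ in $G$; since $v_b \equiv 0$, no auxiliary decomposition of $\widetilde v$ as in Theorem~\ref{thm:representation} is needed. The expected identities are then
\[
\widetilde v = \fsolvel \ast_G H, \qquad \widetilde p = \vpres_\infty(t) + \mathrm P \ast_{\R^3} H(t,\cdot)
\]
for some $\vpres_\infty \in \LS{1}(\torus)$. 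The pointwise bound $|\mathds F| \leq M(1+|x|)^{-A}$ with $A \geq 2$ and the integrability of the surface traces ensure that the two convolutions are well-defined, and the ambiguity in the whole-space Oseen solution, which by~\cite[Lemma~5.2.5]{Eiter20_diss} is a polynomial in space, reduces to a purely time-dependent scalar by virtue of the integrability of $\widetilde v$ at infinity and of $\nabla \widetilde p$.

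To extract the explicit form~\eqref{eq:repr.v.linF}, I would decompose $\fsolvel = 1_\torus \otimes \fsolvelss + \fsolvelpp$ and treat the three summands of $H$ separately. The two $\delta_\Sigma$-pieces become boundary integrals over $\Sigma$ in which the time variable is integrated out against $\fsolvelss$ (producing the steady-state surface contributions) and remains as a time convolution against $\fsolvelpp$ (producing the purely periodic surface contributions); combining them yields the integrand $\TT(v,p)n - n^\top \mathds F$ in \eqref{eq:repr.v.linF}. For the volume contribution $-\fsolvel \ast_G \nabla \cdot (\chi_\Omega \mathds F)$, I would perform distributional integration by parts on $G$ to transfer the divergence onto $\fsolvel$; the symmetry of $\fsolvelss$ and $\fsolvelpp$ then rewrites the result precisely as the two volume integrals $-\int \nabla(\fsolvel \mathsf{e}_i)(x-y) : \mathds F(s,y)\,\dd y\,\mathsf{e}_i$ in \eqref{eq:repr.v.linF}. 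Restricting $x$ to $\Omega$ finishes the velocity representation.

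The pressure formula \eqref{eq:repr.p.linF} is more direct. Since $\mathrm Q = \delta_\torus \otimes \mathrm P$, the time convolution trivializes. I would then use the distributional identity $\nabla \cdot (\chi_\Omega \mathds F) = \chi_\Omega \nabla \cdot \mathds F - (n^\top \mathds F)\delta_\Sigma$ to cancel the $(n^\top \mathds F)\delta_\Sigma$ summand of $H$ against the second term on the right; what remains is $H(t,\cdot) = \TT(v,p)(t,\cdot)n\delta_\Sigma - \chi_\Omega (\nabla \cdot \mathds F)(t,\cdot)$, whose convolution with $\mathrm P$ yields directly the surface integral of $\mathrm P \cdot \TT(v,p)n$ and the volume integral of $-\mathrm P \cdot (\nabla \cdot \mathds F)$ in \eqref{eq:repr.p.linF}. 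The main obstacle throughout will be the rigorous justification of the distributional integration by parts near $\Sigma$, for which the $W^{1,q}$-trace of $\mathds F$ and the smoothness of $\fsolvelss$ and $\fsolvelpp$ away from the origin are essential, together with the identification of the space-independent polynomial ambiguity as a function of time alone, which uses the decay estimates of Section~\ref{sec:notationsfsol} combined with the integrability of $(\widetilde v, \nabla \widetilde p)$ inherited from the regularity of $(v, p)$.
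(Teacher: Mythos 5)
Your proposal is correct and follows essentially the same route as the paper: starting from the whole-space system of Lemma~\ref{lemmaF}, invoking uniqueness up to spatial polynomials to write $\widetilde v=\fsolvel\ast_G H$ and $\widetilde p=p_\infty+\mathrm P\ast_{\R^3}H$, splitting $\fsolvel=1_\torus\otimes\fsolvelss+\fsolvelpp$, and integrating the divergence by parts onto the fundamental solution, exactly as in the proof of Theorem~\ref{thm:representation}. Your explicit cancellation of the $(n^\top\mathds F)\delta_\Sigma$ term in the pressure via $\nabla\cdot(\chi_\Omega\mathds F)=\chi_\Omega\nabla\cdot\mathds F-(n^\top\mathds F)\delta_\Sigma$ is the step the paper leaves implicit, and it is carried out correctly.
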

\begin{proof}
Consider the problem \eqref{sistF}. Taking into account that $\nabla \cdot \widetilde{v} = 0$ in $G$, we obtain 
from the same uniqueness argument as above that
\[
\begin{aligned}
\tilde{p} 
&=  p_\infty + {\mathrm Q} \ast_{G} \left[ \TT(v,p)n \delta_{\Sigma}  -  \nabla \cdot  (\chi_\Omega {\mathds F})  -  (n^\top {\mathds F}) \delta_{\Sigma} \right]  \\
&= p_\infty + {\mathrm P} \ast_{{\mathbb R}^3} \left[ \TT(v,p)n \delta_{\Sigma}  -  (n^\top {\mathds F}) \delta_{\Sigma} \right] - (\nabla  {\mathrm P} ) \ast_{{\mathbb R}^3}  (\chi_\Omega {\mathds F}) 
\end{aligned}
\]
for some $p_\infty\in\LS{1}(\torus)$,
and
  \[
\begin{aligned}
\tilde{v} 
& = \fsolvel \ast_{G} \left[ \TT(v,p)n \delta_{\Sigma}  -  \nabla \cdot  (\chi_\Omega {\mathds F})  -  (n^\top{\mathds F}) \delta_{\Sigma} \right]  \\
& = (1_{{\mathbb T}} \otimes \fsolvelss) \ast_{G}   \left[  \TT(v,p)n \delta_{\Sigma}  -  (n^\top{\mathds F}) \delta_{\Sigma}   \right] - \left[\left(1_{{\mathbb T}} \otimes \nabla (\fsolvelss \mathsf{e}_i) \right) \ast_{G} \left( \chi_\Omega {\mathds F} \right) \right] \mathsf{e}_i \\  
&\qquad  +  \fsolvelpp \ast_{G}   \left[ \TT(v,p)n \delta_{\Sigma}  -  (n^\top{\mathds F}) \delta_{\Sigma}  \right] -  \left[ \nabla ( \fsolvelpp \mathsf{e}_i ) \ast_{G}   \left(\chi_\Omega {\mathds F} \right) \right]  \mathsf{e}_i.
\end{aligned}
\]
From here onwards, the arguments are similar to those of Theorem \ref{thm:representation} and lead to the representations~\eqref{eq:repr.v.linF} and~\eqref{eq:repr.p.linF}.
\end{proof}
\subsection{Asymptotic expansion of the velocity and pressure fields}

First, we consider the case ${\mathds F} \equiv 0$. Here, we use the functions $\Phi$ and $\Psi$
defined in~\eqref{PhiPsi}, which depend on the boundary data $\vvel_b$.

\begin{theorem} \label{thm:asymp.lin}
Let $(v,p)$ be the solution of system \eqref{periodicproblemlin} with ${\mathds F} \equiv 0$,
and such that $f$ has compact support. The asymptotic expansion at infinity of the velocity field 
$\vvel$ is of the form
\begin{equation}
\begin{split}
\vvel(t,x)
&= \bb{\fsolvel (\cdot,x) \ast_{\mathbb T} \mathcal F^{\mathrm{lin}}}(t) 
- \Phi(t)  {\mathrm P}(x) + \Psi(t) \cdot \nabla {\mathrm P}(x) 
+ {\mathcal R}(t,x),
\end{split}
\label{eq:asexp.v}
\end{equation}
where 
\begin{equation}
\mathcal F^{\mathrm{lin}}(t)
\coloneqq \int_{\Omega} f(t,y)\,\dd y +  \int_{\Sigma}\left[ \TT(\vvel,\vpres)(t,y)n (y) + (\zeta \cdot n)(y) v_b(t,y) \right]\dd S(y),
\end{equation}
and there is $C>0$ such that
\begin{equation}\label{eq:remainder.v}
\begin{aligned}
\snorm{{D_x^\alpha\mathcal R}_{0}(x)}
\leq C\bb{\snorm{x}\,\np{1+\wakefct(x)}}^{-3/2-\snorm{\alpha}/2},
\qquad
\snorm{{D_x^\beta\mathcal R}_{\perp}(t,x)}
\leq C\snorm{x}^{-4-\snorm{\beta}}
\end{aligned}
\end{equation} 
for all $\alpha,\beta\in\N_0^3$ with $\snorm{\beta}\leq 2$,
and all $(t,x)\in\torus\times\Omega$
with $\snorm{x}$ sufficiently large.
In particular, asymptotic expansions for the steady-state part $\vvels$ and the purely periodic part $\vvelp$ of $\vvel$ are given by
\begin{align}
\vvels(x) 
&= \fsolvelss (x) \mathcal F^{\mathrm{lin}}_{0} 
- \Phi_0 {\mathrm P}(x) 
+ {\mathcal R}_{0}(x),
\label{eq:asexp.vs}
\\
\begin{split}
\vvelp(t,x)
&= \bb{\fsolvelpp (\cdot,x) \ast_{\mathbb T} \mathcal F^{\mathrm{lin}}_{\perp}}(t) 
-  \Phi_\perp(t) {\mathrm P}(x) + \Psi_\perp(t) \cdot \nabla {\mathrm P}(x) 
+ {\mathcal R}_{\perp}(t,x).
\end{split}
\label{eq:asexp.vp}
\end{align}

The asymptotic expansion at infinity of the pressure is of the form
\begin{equation}
p(t,x) = p_\infty(t) + \Phi'(t) {\mathrm E}(x) + \bb{\mathcal F^{\mathrm{lin}}(t) + \zeta\Phi(t) - \Psi'(t)} \cdot  {\mathrm P}(x)  + {\mathfrak R}(t,x)
\label{eq:asexp.p}
\end{equation}
for some $p_\infty\in\LS{1}(\torus)$
and
\begin{equation}\label{eq:remainder.p}
\snorm{D_x^\alpha{\mathfrak R_0}(x)}
\leq C\snorm{x}^{-3-\snorm{\alpha}},
\qquad
\snorm{D_x^\beta{\mathfrak R_\perp}(t,x)}
\leq C\snorm{x}^{-3-\snorm{\beta}}
\end{equation}
for all $\alpha,\beta\in\N_0^3$ with $\snorm{\beta}\leq 1$,
and all $(t,x)\in\torus\times\Omega$ with $\snorm{x}$ sufficiently large.
\label{TheorMain1}
\end{theorem}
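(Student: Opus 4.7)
\textbf{Proof plan for Theorem~\ref{TheorMain1}.}

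The argument starts from the representation formulas~\eqref{eq:repr.v.lin}--\eqref{eq:repr.p.lin} of Theorem~\ref{thm:representation} and proceeds by replacing each convolution with its leading-order Taylor approximation, absorbing the rest into the remainders $\mathcal R$ and $\mathfrak R$. The main tool is the anisotropic convolution estimate of Lemma~\ref{lem:conv.decay}, combined with the pointwise decay bounds~\eqref{eq:decay.fsolpres}, \eqref{eq:decay.fsolvelss} and the time-integrated decay from Lemma~\ref{lem:decay.fsolvelpp}. Compactness of $\supp f$ and of $\Sigma$ is used throughout to apply Lemma~\ref{lem:conv.decay}.

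For the velocity, I first regroup the four terms $\tilde u_0^{(1)}+\tilde u_0^{(2)}+\tilde u_\perp^{(1)}+\tilde u_\perp^{(2)}$ in~\eqref{eq:repr.v.lin} as a single $G$-convolution $\fsolvel\ast_G\bp{\chi_\Omega f+\bb{\TT(v,p)n+(\zeta\cdot n)v_b}\delta_\Sigma}$, whose total mass integrated over $\Omega\cup\Sigma$ is exactly $\mathcal F^{\mathrm{lin}}(t)$. Lemma~\ref{lem:conv.decay} with $\psi=\fsolvel$ (hence $\phi=\snorm{\nabla\fsolvel}$) replaces this by $\bb{\fsolvel(\cdot,x)\ast_\torus \mathcal F^{\mathrm{lin}}}(t)$ plus a remainder of the order of $\snorm{\nabla\fsolvel}$ convolved with the data. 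The terms $\tilde u_0^{(3)}$ and $\tilde u_\perp^{(3)}$ are themselves convolutions of $\nabla\fsolvel$ with boundary data and, by the same lemma, decay at the rate of $\nabla\fsolvel$ itself, already within the claimed remainder class. Finally, I Taylor-expand $\mathrm P(x-y)=\mathrm P(x)-y\cdot\nabla\mathrm P(x)+O\bp{\snorm{y}^2\snorm{x}^{-4}}$ in the last boundary integral, the expansion being legitimate on compact $\Sigma$ by~\eqref{eq:decay.fsolpres}. Its first two terms contribute exactly $-\Phi(t)\mathrm P(x)$ and $\Psi(t)\cdot\nabla\mathrm P(x)$, while the quadratic Taylor remainder is $O\bp{\snorm{x}^{-4}}$. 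This yields the velocity expansion~\eqref{eq:asexp.v}; the splittings~\eqref{eq:asexp.vs}--\eqref{eq:asexp.vp} follow by applying $\mathcal P$ and $\projcompl$ and using the decomposition~\eqref{fsveldec}. Derivative bounds in~\eqref{eq:remainder.v} are obtained by differentiating under the integral sign and reapplying the lemmas to $D^\alpha\fsolvelss$, $D^\beta\fsolvelpp$ and $D^{2+\alpha}\mathrm P$.

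The pressure~\eqref{eq:repr.p.lin} is treated analogously. Zeroth-order Taylor expansion of $\mathrm P(x-y)$ in the five terms forming $\widetilde p$ yields the coefficient $\bb{\mathcal F^{\mathrm{lin}}(t)+\zeta\Phi(t)}\cdot\mathrm P(x)$, the $\zeta\Phi$ contribution arising from $\int_\Sigma(\zeta\cdot\mathrm P(x-y))(v_b\cdot n)\,\dd S$; the term $2\nu\int_\Sigma v_b\cdot\nabla\mathrm P\,n\,\dd S$ decays like $\snorm{x}^{-3}$ by~\eqref{eq:decay.fsolpres} and is absorbed into $\mathfrak R$. A first-order Taylor expansion $\mathrm E(x-y)=\mathrm E(x)+y\cdot\mathrm P(x)+O\bp{\snorm{y}^2\snorm{x}^{-3}}$ (using $\mathrm P=-\nabla\mathrm E$) applied to the remaining boundary integral produces $\Phi'(t)\mathrm E(x)$ together with a term proportional to $\Psi'(t)\cdot\mathrm P(x)$, thus completing~\eqref{eq:asexp.p}. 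The remainder $\mathfrak R$ collects all Taylor remainders and the $\snorm{x}^{-3}$ contributions, and its derivative bounds again follow by differentiation under the integral sign.

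The main bookkeeping obstacle is to obtain the sharp decay rates for the remainders. The anisotropic bound $\bb{\snorm{x}(1+\wakefct(x))}^{-3/2-\snorm{\alpha}/2}$ for $D^\alpha\mathcal R_0$ requires the sharp estimate~\eqref{eq:decay.fsolvelss} applied to derivatives of $\fsolvelss$ of order $\snorm{\alpha}+1$, while the isotropic rate $\snorm{x}^{-4-\snorm{\beta}}$ for $D^\beta\mathcal R_\perp$ needs the corresponding decay of derivatives of $\fsolvelpp$ of order $\snorm{\beta}+1$ from Lemma~\ref{lem:decay.fsolvelpp}, together with the extra decay factor extracted by Lemma~\ref{lem:conv.decay} from the compact support of the data. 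The logarithmic factors that appear in the nonlinear counterpart (Theorem~\ref{thm:asymp.nonlin}) are absent here precisely because the compact support of $f$ keeps us away from the borderline cases of Lemma~\ref{lem:conv.fsolvelss}.
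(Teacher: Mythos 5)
Your proposal is correct and follows essentially the same route as the paper: start from the representation formulas of Theorem~\ref{thm:representation}, apply Lemma~\ref{lem:conv.decay} (which is exactly the Taylor-expansion-of-the-kernel device you describe) together with the decay estimates \eqref{eq:decay.fsolpres}, \eqref{eq:decay.fsolvelss} and Lemma~\ref{lem:decay.fsolvelpp}, and obtain derivative bounds by differentiating under the integral sign. The only cosmetic difference is that you regroup the steady and purely periodic contributions into a single $G$-convolution before invoking the lemma, whereas the paper estimates $\tilde u_0^{(i)}$ and $\tilde u_\perp^{(i)}$ separately; your closing observations on which order of derivative of the kernels drives each remainder rate, and on why no logarithms appear in the linear case, match the paper's reasoning.
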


\begin{proof}
We derive~\eqref{eq:asexp.vs} and~\eqref{eq:asexp.vp} 
from the representation formula for $\vvel$ in Theorem~\ref{thm:representation}.
For the steady-state part $\vvels=\proj\vvel$
we have
\[
\begin{aligned}
\vvels(x)=
{\widetilde u}_0^{(1)}(x)  +  {\widetilde u}_0^{(2)}(x)  +  {\widetilde u}_0^{(3)}(x) - \int_{\Sigma} \proj v_b(y) \cdot n(y) {\mathrm P}(x-y) \,\dd S(y).
\end{aligned}
\]
Since 
$\snorm{\nabla\fsolvelss(x)}\leq C\bp{\snorm{x}\np{1+\wakefct(x)}}^{-3/2}$
for $\snorm{x}\geq \varepsilon$,
from Lemma~\ref{lem:conv.decay}
we obtain
\[
\begin{aligned}
\snorml{{\widetilde u}_0^{(1)}(x)  +  {\widetilde u}_0^{(2)}(x) 
 - \fsolvelss(x)\mathcal F^{\mathrm{lin}}_0}
\leq C\bp{\snorm{x}\np{1+\wakefct(x)}}^{-3/2}
\end{aligned}
\]
for $\snorm{x}$ sufficiently large.
Lemma~\ref{lem:conv.decay} further yields
\[
\begin{aligned}
\snorml{{\widetilde u}_0^{(3)}(x)}
\leq C \bp{\snorm{x}\np{1+\wakefct(x)}}^{-3/2}
\end{aligned}
\]
for $\snorm{x}$ sufficiently large.
Similarly, using Lemma~\ref{lem:conv.decay} and
the identity $\snorm{\nabla {\mathrm P}(x)}=C\snorm{x}^{-3}$, 
we deduce
\[
\begin{aligned}
\snormL{\int_{\Sigma} \proj v_b(y) \cdot n(y) {\mathrm P}(x-y) \,\dd S(y) 
- {\mathrm P}(x) \Phi_0}
\leq C\snorm{x}^{-3}
\end{aligned}
\]
for $\snorm{x}$ sufficiently large.
Collecting the last three estimates, we conclude~\eqref{eq:asexp.vs}
with the asserted estimate of the remainder $\mathcal R_{0}$. 
Since the purely periodic part $\vvelp=\projcompl\vvel$
satisfies
\[
\begin{aligned}
\vvelp(t,x)=
{\widetilde u}_\perp^{(1)}(t,x)  +  {\widetilde u}_\perp^{(2)}(t,x)  +  {\widetilde u}_\perp^{(3)}(t,x) - \int_{\Sigma} \projcompl v_b(t,y) \cdot n(y) {\mathrm P}(x-y) \,\dd S(y),
\end{aligned}
\]
we obtain in the same way that
\[
\begin{aligned}
\snorml{{\widetilde u}_\perp^{(1)}(t,x)  +  {\widetilde u}^{(2)}_\perp(t,x) 
-\bb{\fsolvelpp(\cdot,x)\ast_{\mathbb T}\mathcal F_{\vvel,\perp}}(t)}
&\leq C\snorm{x}^{-4},
\\
\snorml{{\widetilde u}_\perp^{(3)}(t,x)}
&\leq C \snorm{x}^{-4},
\\
\snormL{\int_{\Sigma} \projcompl v_b(t,y) \cdot n(y) {\mathrm P}(x-y) \,\dd S(y) 
- {\mathrm P}(x) \Phi_\perp(t)
+\nabla {\mathrm P}(x) \Psi_\perp(t) }
&\leq C\snorm{x}^{-4}
\end{aligned}
\]
for large $\snorm{x}$.
Summarizing, we obtain~\eqref{eq:asexp.vp} with 
$\snorm{{\mathcal R}_{\perp}(t,x)}
\leq C\snorm{x}^{-4}$ for large $\snorm{x}$.

Now let us turn to the asymptotic expansion~\eqref{eq:asexp.p} of the pressure.
We define
\[
\begin{aligned}
\vpres^{(1)}(t,x)
& =  \int_{\Omega} f(t,x) \cdot {\mathrm P}(x-y) \,\dd y, 
\\
\vpres^{(2)}(t,x)
& = \int_{\Sigma} {\mathrm P}(x-y) \cdot  \TT(v,p)n(y) \,\dd S_y + \int_{\Sigma} \zeta \cdot n(y) {\mathrm P}(x-y) \cdot v_{b}(t,y) \,\dd S_y
\\
&   \quad
+ \int_{\Sigma} \zeta \cdot {\mathrm P}(x-y) v_{b}(t,y) \cdot n(y)\,\dd S_y, 
\\
\vpres^{(3)}(t,x)
& = \int_{\Sigma} {\mathrm E}(x-y) \partial_t v_{b}(t,y) \cdot n(y)\,\dd S_y, 
\end{aligned}
\]
so that $\vpres(t,x)=\vpres^{(1)}(t,x)+\vpres^{(2)}(t,x)+\vpres^{(3)}(t,x)+p_\infty(t)$.
Arguing as above, 
we use
$\snorm{\nabla {\mathrm P}(x)}\leq C\snorm{x}^{-3}$
for $x\neq0$
to obtain
\[
\begin{aligned}
\snormL{\vpres^{(1)}(t,x)- {\mathrm P}(x) \cdot \int_\Omega f(t,y)\,\dd y }
&\leq 
C\snorm{x}^{-3},
\\
\snormL{\vpres^{(2)}(t,x)
-{\mathrm P}(x)\cdot\int_{\Sigma}\left[ \TT(v,p)(t,y)n (y) + (\zeta \cdot n)(y) v_b(t,y) \right]\,\dd S_y
- \Phi(t)  {\mathrm P}(x)\cdot\zeta }
&\leq C\snorm{x}^{-3},
\\
\snormL{\vpres^{(3)}(t,x)- \Phi'(t) {\mathrm E}(x) + \Psi'(t)\cdot {\mathrm P}(x) }
&\leq 
C\snorm{x}^{-3}
\end{aligned}
\]
for $\snorm{x}$ sufficiently large.
Collecting these estimates,
we arrive at \eqref{eq:asexp.p} 
with remainder term $\mathfrak R$ satisfying~\eqref{eq:remainder.p} for $\alpha=\beta=0$.

To obtain \eqref{eq:remainder.v} for $\snorm{\alpha},\snorm{\beta}\leq2$ and
\eqref{eq:remainder.p} for $\snorm{\alpha},\snorm{\beta}\leq 1$,
we can argue in the same way.
For example, for the pressure $p$ we thus have
\[
\partial_j\mathfrak R(t,x)
=\partial_j p(t,x)-\Phi'(t) \partial_j E(x) 
- \bb{\mathcal F^{\mathrm{lin}}(t)+\zeta\Phi(t)-\Psi'(t)} \cdot \partial_j {\mathrm P}(x)
\]
for $j=1,2,3$,
and we can repeat the previous argument.
Moreover, since $\np{\vvels,\vpress}$ satisfy the steady Oseen equations
and $f$ has compact support,
elliptic regularity implies smoothness of $\np{\vvels,\vpress}$ far from the boundary.
Therefore, the above argument is admissible for derivatives of
$\mathcal R_0$ and $\mathfrak R_0$ of arbitrary order.
\end{proof}

\begin{remark}
By assuming more time regularity of the data $f$ and $\vvel_b$,
we could ensure higher-order regularity of the full solution $(\vvel,\vpres)$ 
far from the boundary, that is, not only of the steady part.
Then the previous argument also yields decay rates 
for higher-order derivatives of the corresponding remainder terms.
\end{remark}

In the next theorem we treat the homogeneous boundary value problem containing solely a forcing of the form $\nabla \cdot {\mathds F}$. Note that specific decay rates are assumed for ${\mathds F}$ and $\nabla \cdot {\mathds F}$.

\begin{theorem} \label{thm:asymp.linF}
In system \eqref{periodicproblemlin} assume $v_b\equiv 0$ and $f \equiv 0$. 
If  ${\mathds F}\in\LS{\infty}(\torus;\WS{1}{\infty}(\Omega)^{3\times3})$ with
\begin{equation}
|{\mathds F}(t,x)| \leq C \bp{\snorm{x}\np{1+\wakefct(x)}}^{-2}, \qquad |{\mathcal P}_\perp{\mathds F}(t,x)| \leq C |x|^{-A}
\label{propF0}
\end{equation}
for some $A>0$,
then the asymptotic expansion at infinity of the velocity field 
$\vvel$ is of the form
\begin{equation}
\begin{split}
\vvel(t,x)
&= \fsolvel (\cdot,x) \ast_{\mathbb T} \left[  \int_{\Sigma}\left( \TT(\vvel,\vpres)n + n^\top {\mathds F} \right)\dd S \right] (t) 
+ {\mathcal R}(t,x),
\end{split}
\label{eq:asexp.vF}
\end{equation}
and there is $C>0$ such that
\begin{equation}\label{eq:remainder.v.linF}
\begin{aligned}
\snorm{{\mathcal R}_{0}(x)}
\leq C\bb{\snorm{x}\,\np{1+\wakefct(x)}}^{-3/2}\log_+\snorm{x},
\qquad
\snorm{{\mathcal R}_{\perp}(t,x)}
\leq C\snorm{x}^{-\min\set{A,4}}
\end{aligned}
\end{equation} 
for all $(t,x)\in\torus\times\Omega$
with $\snorm{x}$ sufficiently large. 
In particular, asymptotic expansions for the steady-state part $\vvels$ 
and the purely periodic part $\vvelp$ are given by
\begin{align}
\vvels(x) 
&= \fsolvelss (x)  \int_{\Sigma} \left[ {\mathcal P}\TT(\vvel,\vpres)(t,y)n (y) + {\mathcal P} n(y)^\top {\mathds F}(t,y) \right]\dd S(y)  + {\mathcal R}_{0}(x),
\label{eq:asexp.vsF}
\\
\begin{split}
\vvelp(t,x)
&= \left[ \fsolvelpp (\cdot,x) \ast_{\mathbb T}   \int_{\Sigma}\left[ {\mathcal P}_\perp \TT(\vvel,\vpres)(t,y)n (y) + {\mathcal P}_\perp n(y)^\top {\mathds F}(t,y)  \right]\dd S(y) \right](t) + {\mathcal R}_{\perp}(t,x).
\end{split}
\label{eq:asexp.vpF}
\end{align}
Additionally, if
\begin{equation}
 | (\nabla \cdot {\mathds F})(t,x)| \leq C \bp{\snorm{x}\np{1+\wakefct(x)}}^{-5/2},
 \label{propFper}
\end{equation}
then the asymptotic expansion at infinity of the pressure is of the form
\begin{equation}
p(t,x) = p_\infty(t) + \left(   \int_{\Sigma}\left[ \TT(\vvel,\vpres)(t,y)n (y) + n(y)^\top {\mathds F}(t,y)  \right]\dd S(y)  \right) \cdot {\mathrm P}(x)+ {\mathfrak R}(t,x),
\label{eq:asexp.pF}
\end{equation}
for some $p_\infty\in\LS{1}(\torus)$
and
\begin{equation}\label{eq:remainder.pF}
\snorm{\mathfrak R(t,x)}
\leq C\snorm{x}^{-2}
\min\setl{1,\np{1+\wakefct(x)}^{-2}\log_+\snorm{x}
+\snorm{x}^{-1}\log_+\snorm{x}
+\np{1+\wakefct(x)}^{-1}}
\end{equation}
for all $(t,x)\in\torus\times\Omega$ 
with $\snorm{x}$ sufficiently large.
\label{TheorMain2}
\end{theorem}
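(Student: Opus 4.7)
\emph{Proof proposal.} The argument follows the same template as Theorem~\ref{thm:asymp.lin}, starting from the representation formulas \eqref{eq:repr.v.linF} and \eqref{eq:repr.p.linF}. The new ingredient is the treatment of the volume convolutions involving $\mathds F$ and $\nabla\cdot\mathds F$.

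For the velocity, we split $\vvel=\vvels+\vvelp$ according to the structure of \eqref{eq:repr.v.linF}. To each boundary convolution we apply Lemma~\ref{lem:conv.decay}: with $\psi=\fsolvelss$ and $\phi=\snorm{\nabla\fsolvelss}$ the decay \eqref{eq:decay.fsolvelss} gives parameters $\alpha=\beta=3/2$, so that the lemma extracts $\fsolvelss(x)\int_\Sigma[\proj\TT(\vvel,\vpres)n+\proj n^\top\mathds F]\,\dd S$ modulo a remainder of order $[\snorm{x}(1+\wakefct(x))]^{-3/2}$; with $\psi=\fsolvelpp$ and Lemma~\ref{lem:decay.fsolvelpp} (giving $\alpha=4$, $\beta=0$) one extracts the corresponding periodic convolution with remainder $O(\snorm{x}^{-4})$. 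For the steady volume convolution $\int_\Omega\nabla(\fsolvelss\mathsf e_i)(x-y):\proj\mathds F(y)\,\dd y$ the hypothesis \eqref{propF0} yields $\snorm{\proj\mathds F}\leq C[(1+\snorm{y})(1+\wakefct(y))]^{-2}$, so Lemma~\ref{lem:conv.fsolvelss} applies with $A=B=2$ (whence $A+\min\{1,B\}=3$ and $A+B=4\geq 7/2$): case~(iii) produces the bound $[\snorm{x}(1+\wakefct(x))]^{-3/2}\log_+\snorm{x}$, explaining the logarithmic factor in $\mathcal R_0$. For the periodic volume convolution $\nabla\fsolvelpp\ast_G\mathds F$ we use that $\int_\torus\nabla\fsolvelpp(t,x)\dd t=0$, so only $\projcompl\mathds F$ contributes; the hypothesis $\snorm{\projcompl\mathds F(t,y)}\leq C\snorm{y}^{-A}$ together with estimate~\eqref{est:ConvFundsolpp_Grad} of Lemma~\ref{lem:conv.fsolvelpp} gives the bound $C\snorm{x}^{-\min\{A,4\}}$. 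Collecting the four contributions yields the steady-state and purely periodic expansions \eqref{eq:asexp.vsF} and \eqref{eq:asexp.vpF}, and hence \eqref{eq:asexp.vF}.

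For the pressure, the boundary contribution $\int_\Sigma\mathrm P(x-y)\cdot\TT(\vvel,\vpres)n\,\dd S$ in \eqref{eq:repr.p.linF} is handled by Lemma~\ref{lem:conv.decay} with $\psi=\mathrm P$ and $\phi=\snorm{\nabla\mathrm P}\leq C\snorm{\cdot}^{-3}$ (so $\alpha=3$, $\beta=0$), producing the leading term $\mathrm P(x)\cdot\int_\Sigma\TT(\vvel,\vpres)n\,\dd S$ with remainder $O(\snorm{x}^{-3})$, a decay already compatible with \eqref{eq:remainder.pF}. To treat the volume integral $-\int_\Omega(\nabla\cdot\mathds F)(t,y)\cdot\mathrm P(x-y)\,\dd y$ we integrate by parts on $\Omega$; the decay of $\mathds F$ at infinity ensures the boundary integral at infinity vanishes, producing a boundary term on $\Sigma$ of the form $\pm\int_\Sigma\mathrm P(x-y)\cdot(n^\top\mathds F)\,\dd S$ (which combines with the previous stress term to give exactly $\int_\Sigma[\TT(\vvel,\vpres)n+n^\top\mathds F]\dd S$ once the normal convention is fixed) plus the singular volume integral $\int_\Omega\nabla\mathrm P(x-y):\mathds F(t,y)\,\dd y$, interpreted in the principal value sense. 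For $\snorm{x}$ large we pick $R$ with $B_R(x)\subset\Omega$ and write the latter as the sum of a smooth-exterior piece on $\Omega\setminus B_R(x)$, estimated by Lemma~\ref{lem:conv.fsolpres} (applicable since $\snorm{\mathds F(t,y)}\leq C[(1+\snorm{y})(1+\wakefct(y))]^{-2}$), and a local piece on $B_R(x)$, where the cancellation $\int_{B_R(x)\setminus B_\varepsilon(x)}\nabla\mathrm P(x-y)\,\dd y=0$ together with the $W^{1,\infty}$-regularity of $\mathds F$ yields a locally bounded contribution that can be absorbed into the remainder using the decay of $\mathds F$ at $x$. Collecting these estimates and the leading boundary term produces \eqref{eq:asexp.pF} with a remainder subject to \eqref{eq:remainder.pF}.

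\emph{Main obstacle.} The delicate point is the pressure volume integral near the singularity of $\nabla\mathrm P$, which is a Calder\'on--Zygmund kernel; the key is to balance the anisotropic decay coming from \eqref{propFper} against this strong singularity. The cleanest route is either (i) the integration-by-parts approach described above, invoking Lemma~\ref{lem:conv.fsolpres} on the exterior of $B_R(x)$ and the mean-value cancellation of $\nabla\mathrm P$ together with the Lipschitz regularity of $\mathds F$ on $B_R(x)$, or (ii) estimating the convolution $\mathrm P\ast(\nabla\cdot\mathds F)$ directly through anisotropic integrals of Kracmar--Novotn\'y--Pokorn\'y type, bypassing the singular integral altogether. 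The rest of the argument is essentially bookkeeping once the structural analogy with the proof of Theorem~\ref{thm:asymp.lin} is set up.
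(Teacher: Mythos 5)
Your treatment of the velocity field is essentially the paper's proof: split off the stress-tensor boundary term and handle it as in Theorem~\ref{thm:asymp.lin}, apply Lemma~\ref{lem:conv.decay} to the boundary convolutions with $n^\top\mathds F$, Lemma~\ref{lem:conv.fsolvelss}(iii) with $A=B=2$ to the steady volume term (which is where the logarithm comes from), and Lemma~\ref{lem:conv.fsolvelpp} to the purely periodic volume term after observing that only $\projcompl\mathds F$ contributes. That part is correct and matches the paper.

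The pressure argument has a genuine gap in your route (i). After integrating by parts over all of $\Omega$, the near-singularity piece you must control is
\[
\mathrm{p.v.}\int_{B_R(x)}\nabla\fsolpres(x-y):\mathds F(t,y)\,\dd y
=\int_{B_R(x)}\nabla\fsolpres(x-y):\bp{\mathds F(t,y)-\mathds F(t,x)}\,\dd y ,
\]
and the cancellation of $\nabla\fsolpres$ only converts the $\snorm{x-y}^{-3}$ singularity into something integrable at the price of the \emph{oscillation} of $\mathds F$ near $x$. The hypotheses give decay of $\mathds F$ itself and of $\nabla\cdot\mathds F$, but not of the full gradient $\nabla\mathds F$; the global $\WS{1}{\infty}$ bound only yields $\snorm{\mathds F(t,y)-\mathds F(t,x)}\leq L\snorm{x-y}$ with an $x$-independent $L$, so your local piece is merely bounded by $CLR$, not decaying like $\snorm{x}^{-2}$ as \eqref{eq:remainder.pF} requires. "Absorbed into the remainder using the decay of $\mathds F$ at $x$" does not work here: after subtracting $\mathds F(t,x)$, the decay of $\mathds F$ no longer enters, and an interpolation between the Lipschitz bound and the sup bound still produces a factor $\np{1+\wakefct(x)}^{-2}\log_+\snorm{x}$ that is not dominated by $\min\{1,\dots\}$ in the wake-free directions where $\wakefct(x)=0$.

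The paper avoids this by never integrating by parts near the singularity. One inserts a cutoff $\chi$ supported in $B_{R_\ast}$ and writes $\fsolpres\ast(\nabla\cdot\mathds F)$ as the sum of (a) $\int_\Omega\chi(x-y)\fsolpres(x-y)\cdot\nb{\nabla\cdot\mathds F(t,y)}\,\dd y$, where $\snorm{\fsolpres(x-y)}\leq C\snorm{x-y}^{-2}$ is locally integrable in $\R^3$ and \eqref{propFper} directly gives the bound $C\bp{\snorm{x}\np{1+\wakefct(x)}}^{-5/2}$; (b) the far piece $\int_\Omega\nb{1-\chi(x-y)}\nabla\fsolpres(x-y):\mathds F(t,y)\,\dd y$, obtained by integrating by parts only where $1-\chi\neq0$ and estimated by Lemma~\ref{lem:conv.fsolpres}; (c) a harmless commutator term involving $\nabla\chi$; and (d) the boundary integral over $\Sigma$ produced by this partial integration by parts, from which Lemma~\ref{lem:conv.decay} extracts the leading term $\fsolpres(x)\cdot\int_\Sigma n^\top\mathds F\,\dd S$. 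This is essentially your route (ii) combined with a localized integration by parts; if you want to keep your global integration by parts, you must undo it on $B_R(x)$ (picking up a multiple of $\mathds F(t,x)$ from the $\partial B_\varepsilon(x)$ limit) so that the hypothesis on $\nabla\cdot\mathds F$, rather than on $\nabla\mathds F$, is what controls the singular region.
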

\begin{proof} 
We use the representation formulas from Theorem~\ref{thm:representationF} and split $\vvel$ into several parts. 
Then
\[
u(t,x):= \fsolvel \ast_{G} \left[ (\TT(v,p)n \delta_{\Sigma} \right] (t,x) = \left[  \fsolvel (\cdot,x) \ast_{\mathbb T}  \int_{\Sigma} \TT(\vvel,\vpres) n\,\dd S \right]  (t) + {\mathcal R}^\uvel (t,x)
\] 
is treated as in the previous lemma. 
For the part $\wvel$, which involves $\mathds F$, it is convenient to decompose $\wvel=\wvel^{\Omega}+\wvel^\Sigma$ with
\begin{align}
\begin{split}
\wvel^\Omega(t,x)
&:=\int_{\mathbb T}  \int_{\Omega}   \left[ \nabla \left( \fsolvel \mathsf{e}_i \right)(t-s,x  - y) \right]:  {\mathds F} (s,y)  \, \dd y \dd s \, \mathsf{e}_i
\\
&\phantom{:}=
\int_{\Omega} \left[ \nabla \left(\fsolvelss \mathsf{e}_i \right) (x  - y)  \right] :  \int_{\mathbb T}  {\mathds F} (s,y) \dd s  \, \dd y \, \mathsf{e}_i
\\
& \qquad
+  \int_{\mathbb T}  \int_{\Omega}  \left[ \nabla (\fsolvelpp \mathsf{e}_i)  (t-s,x-y) \right]  :  {\mathds F} (s,y)  \, \dd y \dd s \, \mathsf{e}_i
\\
&\phantom{:}=:
\wvel^\Omega_0(x) + \wvel^\Omega_\perp(t,x),
\end{split}
\label{eq:womega}
\\
\begin{split}
\wvel^\Sigma(t,x)
&:=\int_{\mathbb T}  \int_{\Sigma}  \fsolvel(x-y,t-s) n(y)^\top {\mathds F}(s,y)  \, \dd S(y) \dd s
\\
&\phantom{:}=
\int_{\Sigma} \fsolvelss(x-y) \int_{\mathbb T}  n(y)^\top {\mathds F}(s,y)  \dd s \, \dd S(y)
\\
&\qquad
+ \int_{\mathbb T}  \int_{\Sigma}  \fsolvelpp(t-s,x-y) n(y)^\top {\mathds F}(s,y)  \, \dd S(y) \dd s
\\
&\hphantom{:}=: \wvel^\Sigma_0(x) + \wvel^\Sigma_\perp(t,x).
\end{split}
\label{eq:wsigma}
\end{align}
Combining the decay properties \eqref{propF0} with Lemma~\ref{lem:conv.fsolvelss} and Lemma~\ref{lem:conv.fsolvelpp}, we conclude
\[
\begin{aligned}
\snorm{\wvels^\Omega(x)} 
&\leq C \bp{\snorm{x}\np{1+\wakefct\np{x}}}^{-3/2}\log_+\snorm{x},
\\
\snorm{\wvelp^\Omega(t,x)} 
&\leq C \snorm{x}^{-\min\set{A,4}}.
\end{aligned}
\]
If we define
\[
\mathcal R^{\wvel^\Sigma}(t,x)
\coloneqq\wvel^\Sigma(t,x) - 
\left[  \fsolvel (\cdot,x) \ast_{\mathbb T} \int_{\Sigma}  n^\top {\mathds F} \dd S \right] (t), 
 \]
then Lemma~\ref{lem:conv.decay} implies 
\[
\begin{aligned}
\snorml{\mathcal R^{\wvel^\Sigma}_0(x)}
&\leq C\bp{\snorm{x}\np{1+\wakefct\np{x}}}^{-3/2},
\\
\snorml{\mathcal R^{\wvel^\Sigma}_\perp(t,x)}
&\leq C\snorm{x}^{-4}
\end{aligned}
\]
for all $(t,x)\in\torus\times\Omega$ 
with $\snorm{x}$ sufficiently large. 
Since we have ${\mathcal R} = {\mathcal R}^\uvel - \wvel^\Omega - {\mathcal R}^{\wvel^\Sigma}$,
estimate \eqref{eq:remainder.v.linF} follows.

Now let us address the pressure term. 
We write \eqref{eq:repr.p.linF} in the form  $\vpres=\upres-\wpres$ with
\[
\wpres(t,x)\coloneqq
\int_{\Omega} {\mathrm P}(x-y) \cdot  (\nabla \cdot {\mathds F})(t,y) \,\dd y.
\]
As in the proof of Theorem~\ref{thm:asymp.lin}
we obtain
\begin{equation}
\snorml{\mathfrak R^\upres(t,x)}\leq C \snorm{x}^{-3}
\label{eq:est.rem.upres}
\end{equation}
for all $(t,x)\in\torus\times\Omega$,
where we split $\mathfrak R=\mathfrak R^\upres-\mathfrak R^\wpres$
with
\[
\mathfrak R^\wpres(t,x)
=\wpres(t,x)
- {\mathrm P}(x)\cdot\int_{\Sigma} n(y)^\top {\mathds F}(t,y)  \,\dd S(y).
\]
Fix $R_\ast>0$ such that $\Sigma\subset B_{R_\ast}$ and let $\chi\in\CSci(\R^3)$  be a smooth function
such that $\chi(x)=1$ for $\snorm{x}\leq R_\ast/2$
and $\chi(x)=0$ for $\snorm{x}\geq R_\ast$. Then 
\[
\begin{aligned}
\wpres(t,x)
&=\int_{\Omega} \chi(x-y) {\mathrm P}(x-y) \cdot  [ \nabla \cdot {\mathds F}(t,y) ] \,\dd y
\\
&\qquad
+\int_{\Omega} \nb{1-\chi(x-y)} \nabla {\mathrm P}(x-y) :  {\mathds F}(t,y) \,\dd y
\\
&\qquad
-\int_{\Omega} \nabla\chi(x-y)  \otimes {\mathrm P}(x-y) : {\mathds F}(t,y) \,\dd y
\\
&\qquad
+\int_{\Sigma} \chi(x-y) {\mathrm P}(x-y) \cdot \left[ n(y)^\top {\mathds F}(t,y) \right] \,\dd y
\\
&\eqqcolon \wpres_1(t,x)+\wpres_2(t,x)+\wpres_3(t,x)+\wpres_4(t,x).
\end{aligned}
\]
For the first term, we use $\fsolpres(x)=C \snorm{x}^{-2}$
and directly estimate
\[
\snorml{\wpres_1(t,x)}
\leq
\int_{B_{R^\ast}(x)} C\snorm{x-y}^{-2} \bp{\snorm{y}\np{1+\wakefct(y)}}^{-5/2} \,\dd y
\leq C \bp{\snorm{x}\np{1+\wakefct(x)}}^{-5/2}
\]
for $\snorm{x}\geq 2R_\ast$.
For the second term, Lemma~\ref{lem:conv.fsolpres}
implies
\[
\begin{aligned}
\snorml{\wpres_2(t,x)}
\leq 
C\snorm{x}^{-2}
\min\setl{1,\np{1+\wakefct(x)}^{-2}\log_+\snorm{x}
+\snorm{x}^{-1}\log_+\snorm{x}
+\np{1+\wakefct(x)}^{-1}}.
\end{aligned}
\]
For the third term, we consider $\snorm{x}\geq 2R_\ast$ and deduce
\[
\snorml{\wpres_3(t,x)}
\leq
\int_{B_{R_\ast}(x)\setminus B_{R_\ast/2}(x)} 
\snorm{x-y}^{-2} \bp{\snorm{y}\np{1+\wakefct(y)}}^{-2} \,\dd y
\leq \bp{\snorm{x}\np{1+\wakefct(x)}}^{-2}.
\]
Invoking Lemma~\ref{lem:conv.decay} for the last term, 
we conclude
\[
\snormL{\wpres_4(t,x)
-{\mathrm P}(x)\cdot\int_{\Sigma} {\mathds F}(t,y) n(y)\,\dd S(y)} \leq C\snorm{x}^{-3}
\]
for $\snorm{x}$ large.
In virtue of $\mathfrak R=\mathfrak R^\upres-\mathfrak R^\wpres$
and~\eqref{eq:est.rem.upres},
we finally arrive at~\eqref{eq:remainder.pF}.
\end{proof}

\section{Proofs of the main results}
\label{sec:proofs}

The representation formulas for the nonlinear case 
given in Theorem~\ref{thm:repr.nonlin} 
can be deduced from the linear case as follows.

\begin{proof}[Proof of Theorem~\ref{thm:repr.nonlin}]
The weak solution $\vvel$
to the nonlinear problem~\eqref{periodicproblem}
solves the linear problem~\eqref{periodicproblemlin}
if we replace ${\mathds F}$ with $ \vvel \otimes \vvel$
since we then have $f-\nabla\cdot \mathds F = f-\vvel\cdot\nabla\vvel$.
By Theorem~\ref{thm:regularity}
we deduce $\vvel\cdot\nabla\vvel\in\LS{q}(\torus\times\Omega)^3$ 
for any $q\in(1,\infty)$.
Therefore, the representation formulas~\eqref{eq:repr.v},
\eqref{eq:repr.v2} and~\eqref{eq:repr.p}
directly follow from combining those for the linear case given in Theorem~\ref{thm:representation}
and Theorem~\ref{thm:representationF}.
\end{proof}

To prove Theorem~\ref{thm:asymp.nonlin},
the linear terms in the representation formulas can be handled
as in the proof of Theorem~\ref{thm:asymp.lin}.
However, new difficulties arise from the nonlinear terms.
To treat them, 
we begin with the following decay property of the velocity gradient.
For the whole section,
we choose a radius $R_\ast>0$ such that $\Sigma\subset B_{R_\ast}$.

\begin{lemma}\label{lem:gradvel.decay}
For all $\varepsilon>0$ there exists $C>0$ such that
for all $R>R_*$ it holds
\begin{equation}\label{eq:gradvel.decay}
\int_\torus\int_{\Omega\setminus B_R}|\nabla\vvel|^2\,\dd x\dd t 
\leq C R^{-1+\varepsilon}.
\end{equation}
\end{lemma}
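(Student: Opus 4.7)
The plan is to derive this estimate from a cutoff energy identity on each dyadic annulus, followed by summation. I introduce a smooth radial cutoff $\chi_R$ with $\chi_R=1$ on $B_{2R}\setminus B_R$, $\supp\chi_R\subset B_{4R}\setminus B_{R/2}$, and $\snorm{\nabla\chi_R}+R\snorm{\nabla^2\chi_R}\leq C/R$. For $R>R_\ast$ large enough, $\supp\chi_R$ meets neither $\Sigma$ nor $\supp f$ (the latter being compact under the hypotheses of Theorem~\ref{thm:asymp.nonlin}), so testing the momentum equation in~\eqref{periodicproblem} with the compactly supported function $\vvel\chi_R^2$ and integrating over $\torus\times\Omega$ is admissible. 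Time-periodicity eliminates the $\partial_t$ contribution. Using $\dv\vvel=0$, integration by parts converts the diffusive, Oseen, and convective contributions into a principal piece $\nu\int\snorm{\nabla\vvel}^2\chi_R^2$ plus error terms localized in $\mathcal A_R\coloneqq(B_R\setminus B_{R/2})\cup(B_{4R}\setminus B_{2R})$. The pressure is handled by rewriting $\nabla\vpres=\nabla(\vpres-\vpres_\infty(t))$ before integrating by parts, producing $-2\int(\vpres-\vpres_\infty)\chi_R\vvel\cdot\nabla\chi_R$. The compact support of the test function rules out any boundary contribution at infinity.

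Collecting the terms yields
\[
\nu\int_\torus\!\!\int_\Omega\snorm{\nabla\vvel}^2\chi_R^2\,\dd x\dd t
\leq \frac{C}{R^2}\int_\torus\!\!\int_{\mathcal A_R}\snorm{\vvel}^2
+ \frac{C}{R}\int_\torus\!\!\int_{\mathcal A_R}\bp{\snorm{\vpres-\vpres_\infty}\snorm{\vvel}+\snorm{\vvel}^2+\snorm{\vvel}^3}.
\]
Theorem~\ref{thm:regularity} gives $\vvels\in\LS{s_0}(\Omega)$ for every $s_0\in(2,\infty]$ and $\vvelp\in\LS{r}(\torus;\LS{q}(\Omega))$ for all $r,q\in(1,\infty)$, hence $\vvel\in\LS{r}(\torus;\LS{s}(\Omega))$ for every $s>2$, $r>1$. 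H\"older's inequality therefore bounds $\int_\torus\int_{\mathcal A_R}\snorm{\vvel}^2\leq\snorm{\mathcal A_R}^{1-2/s}\norm{\vvel}_{\LS{2}(\torus;\LS{s}(\Omega))}^2\leq CR^{3(1-2/s)}\leq CR^{\varepsilon}$ when $s$ is chosen sufficiently close to $2$, and the cubic term is treated analogously with $s$ close to $3$. For the pressure contribution, $\nabla\vpres\in\LS{r}(\torus;\LS{6/5}(\Omega))$ together with the Sobolev embedding of $D^{1,6/5}(\Omega)$ into $\LS{2}(\Omega)$ (modulo a time-dependent constant, which is exactly $\vpres_\infty(t)$) yields $\vpres-\vpres_\infty\in\LS{r}(\torus;\LS{2}(\Omega))$, so a further H\"older step bounds the pressure term by $CR^{-1+\varepsilon}$ as well. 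Altogether, $\int_\torus\int_{B_{2R}\setminus B_R}\snorm{\nabla\vvel}^2\leq CR^{-1+\varepsilon}$, and summation over the dyadic scales $R, 2R, 4R,\dots$ produces the claimed estimate.

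The main obstacle is the pressure, since $\vpres$ itself is not known to lie in any Lebesgue space, only its gradient is. The remedy is to subtract the time-periodic normalisation $\vpres_\infty(t)$ from Theorem~\ref{thm:regularity} and then invoke the Sobolev embedding of the homogeneous space $D^{1,q}(\Omega)$, available in three dimensions for $q<3$. Beyond that, the argument is a standard Caccioppoli-type computation; the only delicate point is tuning the H\"older exponents just above the critical values $s=2$, $s=3$ and $q=6/5$ so that the annular volume factor $\snorm{\mathcal A_R}^{1-2/s}\sim R^{3(1-2/s)}$ only costs $R^\varepsilon$ and does not absorb the decay supplied by the $1/R$ or $1/R^2$ prefactors from the cutoff derivatives.
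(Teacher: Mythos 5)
Your argument is correct and is essentially the proof the paper relies on: the authors simply cite \cite[Lemma 5.2]{GaldiKyed18}, whose proof is precisely this kind of annular cutoff (Caccioppoli-type) energy estimate combined with the summability of $\vvel$, $\nabla\vvel$ and the normalized pressure, and they note that it transfers verbatim to the exterior domain thanks to the regularity of Theorem~\ref{thm:regularity}. Your reconstruction handles the exterior-domain specifics correctly — the cutoff avoiding $\Sigma$ and $\supp f$, the pressure normalization via $D^{1,6/5}(\Omega)\hookrightarrow L^2(\Omega)$ modulo a time-dependent constant, and the tuning of the H\"older exponents near the critical values — so no gap remains.
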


\begin{proof}
For the case $\Omega=\R^3$, 
this estimate was shown in~\cite[Lemma 5.2]{GaldiKyed18}.
In the present case of an exterior domain $\Omega$, 
it follows in exactly the same way
by using the increased regularity stated in Theorem~\ref{thm:regularity}.
\end{proof}

From \eqref{eq:gradvel.decay}
we can derive a first pointwise estimate 
of the remainder terms for the velocity field.
This estimate serves as a starting point 
for an iterative procedure that increases the decay rate step by step.

\begin{proof}[Proof of Theorem~\ref{thm:asymp.nonlin}]

We begin with the treatment of the velocity field
and follow the approach from~\cite{GaldiKyed18}
and~\cite{Eiter21}.
Instead of repeating these calculations here in detail,
we explain the major steps and the differences that are due to 
the occurrence of boundary terms.

To separate the linear from the nonlinear contributions in the representation formula~\eqref{eq:repr.v},
we let $\vvel = \uvel-\wvel$ with
\[
\wvel(t,x)
\coloneqq \int_{\torus\times\Omega}
\fsolvel(t-s,x-y) \nb{\vvel(s,y)\cdot\nabla\vvel(s,y)}\,\dd s \dd y.
\]
Then $\uvel$ satisfies
\[
\begin{aligned}
\uvel(t,x)  & =  {\widetilde u}_0^{(1)}(x)  +  {\widetilde u}_0^{(2)}(x)  +  {\widetilde u}_0^{(3)}(x) 
\medskip \\ 
&
\quad +  {\widetilde u}_\perp^{(1)}(t,x)  +  {\widetilde u}_\perp^{(2)}(t,x)  +  {\widetilde u}_\perp^{(3)}(t,x) 
\medskip  \\
& 
\quad  - \int_{\Sigma}  v_b(t,y) \cdot n(y) {\mathrm P}(x-y) \,\dd S(y)
\end{aligned}
\]
for $(t,x) \in {\mathbb T} \times \Omega$,
where the terms on the right-hand side are
exactly the same as in the representation formula~\eqref{eq:repr.v.lin} for the linear case.
Decomposing $\uvel=\uvels+\uvelp$ into steady-state and purely periodic part,
and arguing as in the proof of Theorem~\ref{thm:asymp.lin},
we obtain that the associated remainder term
\[
\begin{aligned}
\mathcal R^\uvel(t,x)
&\coloneqq \uvel(t,x) - 
\bb{\fsolvel(\cdot,x) \ast_{\mathbb T} \mathcal F^{\mathrm{lin}}}(t) 
+ {\mathrm P}(x) \Phi(t)
-\nabla {\mathrm P}(x) \Psi(t),
\end{aligned}
\]
where 
\[
\mathcal F^{\mathrm{lin}}(t)
\coloneqq
\int_{\Omega} f(t,y)\,\dd y 
+ \int_{\Sigma}\left[ \TT(v,p)(t,y)n (y) + \bp{\zeta \cdot n(y)}\vvel_b(t,y)\right]\,\dd S(y),
\]
satisfies
the same estimates as in the linear case,
that is,
\begin{equation}
\snorm{D_x^\alpha\mathcal R^\uvel_{0}(x)}
\leq C\bb{\snorm{x}\,\np{1+\wakefct(x)}}^{-3/2-\snorm{\alpha}/2},
\qquad
\snorm{D_x^\alpha\mathcal R^\uvel_{\perp}(t,x)}
\leq C\snorm{x}^{-4-\snorm{\alpha}}
\label{eq:est.remainder.u}
\end{equation}
for $\snorm{\alpha}\leq 2$ and $(t,x)\in\torus\times\Omega$
with $\snorm{x}$ large.
In particular, 
in virtue of the decay properties of the fundamental solutions,
this implies
\begin{align}
\snorm{D_x^\alpha\uvels(x)}
&\leq C\bb{\snorm{x}\,\np{1+\wakefct(x)}}^{-1-\snorm{\alpha}/2},
\label{eq:est.u.s}
\\
\snorm{D_x^\alpha\uvelp(t,x)}
&\leq C\snorm{x}^{-2-\snorm{\alpha}}.
\label{eq:est.u.p}
\end{align}

Next we derive estimates of $\wvel$,
which will lead to an estimate of the actual remainder term 
$\mathcal R=\mathcal R^\uvel -\mathcal R^\wvel$,
where
\[
\mathcal R^\wvel(t,x)
\coloneqq\wvel(t,x)-\int_\torus\fsolvel(t-s,x) \int_{\Sigma}\np{\vvel_b(s,y)\cdot n(y)}\vvel_b(s,y) \,\dd S(y)\dd s.
\]
To do so, we decompose $\wvel=\wvels+\wvelp$
such that
\begin{align}
\wvels(x)
&=\fsolvelss \ast_{{\mathbb R}^3}\bb{\chi_\Omega\bp{\vvels\cdot\nabla\vvels+\proj\np{\vvelp\cdot\nabla\vvelp}}}(x),
\label{eq:repr.w.s}
\\
\begin{split}
\wvelp(t,x)
&=\fsolvelpp \ast_G \bb{\chi_\Omega\bp{\vvels\cdot\nabla\vvelp + \vvelp\cdot\nabla\vvels+\projcompl\np{\vvelp\cdot\nabla\vvelp}}}(t,x).
\end{split}
\label{eq:repr.w.p}
\end{align}
Exploiting the decay properties of $\fsolvel$ and Lemma~\ref{lem:gradvel.decay},
one can argue as in~\cite{GaldiKyed18}
to first show that
\begin{equation}
\forall\varepsilon>0 \quad \exists C>0 : \ \
\snorm{\wvels(t,x)}\leq C
\snorm{x}^{-1+\varepsilon}
\label{eq:est.w.s1}
\end{equation}
and
\begin{multline}
\forall\varepsilon>0 \, \quad \exists C>0 : \quad 
\snorm{\wvelp(t,x)}\leq \medskip \\ C\bp{
\snorm{x}^{-3/2}
+\snorm{x}^{-2/5+\varepsilon}\np{
\norm{\vvels}_{\LS{\infty}(B^{\snorm{x}/2})}
+\norm{\vvelp}_{\LS{\infty}(\torus\times B^{\snorm{x}/2})}}}.
\label{eq:est.w.p1}
\end{multline}
Combining \eqref{eq:est.w.s1} with \eqref{eq:est.u.s}, we conclude
\begin{equation}
\forall\varepsilon>0 \quad \exists C>0 : \ \
\snorm{\vvels(t,x)}\leq C
\snorm{x}^{-1+\varepsilon}.
\label{eq:est.v.s1}
\end{equation}
Similarly, 
\eqref{eq:est.u.p} leads to
\begin{equation}
\forall\varepsilon>0 \quad \exists C>0 : \ \
\snorm{\vvelp(t,x)}\leq C\bp{\snorm{\wvelp(t,x)}
+\snorm{x}^{-2}
}.
\label{eq:est.v.p1}
\end{equation}
We use these inequalities to further estimate \eqref{eq:est.w.p1} and to obtain
\begin{equation}
\forall\varepsilon>0 \quad \exists C>0 : \ \ 
\snorm{\wvelp(t,x)}\leq C\bp{
\snorm{x}^{-2/5+\varepsilon}\norm{\wvelp}_{\LS{\infty}(\torus\times B^{\snorm{x}/2})}
+\snorm{x}^{-7/5+\varepsilon}}.
\label{eq:est.w.p2}
\end{equation}
Since $\vvelp\in\LS{\infty}(\torus\times\Omega)$ by Theorem~\ref{thm:regularity},
we have $\wvelp\in\LS{\infty}(\torus\times\Omega)$ due to \eqref{eq:est.u.p}.
This allows to derive a first estimate of $\wvelp$ from~\eqref{eq:est.w.p2},
We can use this estimate to derive a new bound of the right-hand side of~\eqref{eq:est.w.p2},
which improves the estimate of $\wvelp$.
Iterating this procedure and combining it with~\eqref{eq:est.v.p1}, we arrive at
\begin{equation}
\forall\varepsilon>0 \quad \exists C>0 : \ \ 
\snorm{\vvelp(t,x)} + \snorm{\wvelp(t,x)}\leq C\snorm{x}^{-7/5+\varepsilon}.
\label{eq:est.vw.p1}
\end{equation}

To improve these decay rates, 
we turn to the representation \eqref{eq:repr.v2}, 
from which we deduce that $\wvel=\wvel^\Omega+\wvel^\Sigma$
with $\wvel^\Omega$ and $\wvel^\Sigma$ 
defined as in~\eqref{eq:womega} and~\eqref{eq:wsigma}
for $\mathds F\coloneqq\vvel\otimes\vvel$, respectively.
First of all, Lemma~\ref{lem:conv.decay} implies the estimates
\begin{align}
\snorm{\wvels^\Sigma(x)}
&\leq C\bp{\snorm{x}\np{1+\wakefct\np{x}}}^{-1},
\label{eq:est.wsigma.s}
\\
\snorm{\wvelp^\Sigma(t,x)}
&\leq C\snorm{x}^{-3}
\label{eq:est.wsigma.p}
\end{align}
for $\snorm{x}$ large.
To obtain a decay rate for $\wvelp^\Omega$,
we use~\eqref{eq:est.v.s1} and~\eqref{eq:est.vw.p1},
which imply
\[
\snorm{\vvels\otimes\vvelp+\vvelp\otimes\vvels+\projcompl\np{\vvelp\otimes\vvelp}}(t,x)
\leq C\bp{\snorm{x}^{-12/5+\varepsilon}+\snorm{x}^{-14/5+\varepsilon}}.
\]
Now Lemma~\ref{lem:conv.fsolvelpp} yields 
the estimate
\begin{equation}
\forall\varepsilon>0 \quad  \exists C>0 : \ \ 
\snorm{\wvelp^\Omega(t,x)}\leq C\snorm{x}^{-12/5+\varepsilon}.
\label{eq:est.w.p3}
\end{equation}
In virtue of~\eqref{eq:est.v.p1} and~\eqref{eq:est.wsigma.p}, this yields the (optimal) estimate
\begin{equation}
\snorm{\vvelp(t,x)}
\leq C\snorm{x}^{-2}
\label{eq:est.v.p3}
\end{equation}
for all $(t,x)\in\torus\times\Omega$
with $\snorm{x}$ large. 
For $\wvels^\Omega$ we proceed similarly.
From~\eqref{eq:est.v.s1} and \eqref{eq:est.v.p3}
we deduce
\[
\snorm{\vvels\otimes\vvels+\proj\np{\vvelp\otimes\vvelp}} (x)
\leq C\bp{\snorm{x}^{-2+\varepsilon}+\snorm{x}^{-4}},
\]
so that Lemma~\ref{lem:conv.fsolvelss} yields
\begin{equation}
\forall\varepsilon>0 \quad \exists C>0 : \ \ 
\snorm{\wvels^\Omega(x)}\leq C
\snorm{x}^{-1+\varepsilon}\np{1+\wakefct\np{x}}^{-1/2+\varepsilon}.
\label{eq:est.w.s2}
\end{equation}
Combining this estimate with~\eqref{eq:est.u.s} and~\eqref{eq:est.wsigma.s},
we conclude
\begin{equation}
\forall\varepsilon>0 \quad \exists C>0 : \ \ 
\snorm{\vvels(x)}
\leq C\snorm{x}^{-1+\varepsilon}\np{1+\wakefct\np{x}}^{-1/2+\varepsilon},
\label{eq:est.v.s2}
\end{equation}
which leads to the improved pointwise estimate
\[
\snorm{\vvels\otimes\vvels+\proj\np{\vvelp\otimes\vvelp}} (x)
\leq C\bp{\snorm{x}^{-2+\varepsilon}\np{1+\wakefct\np{x}}^{-1+\varepsilon}+\snorm{x}^{-4}}.
\]
Repeating the above argument and using Lemma~\ref{lem:conv.fsolvelss} again,
we deduce
\begin{equation}
\forall\varepsilon>0 \quad \exists C>0 : \ \ 
\snorm{\wvels(x)}\leq C
\snorm{x}^{-3/2+\varepsilon}\np{1+\wakefct\np{x}}^{-1+\varepsilon},
\label{eq:est.w.s3}
\end{equation}
and with~\eqref{eq:est.u.s} and~\eqref{eq:est.wsigma.s}
we conclude the (optimal) decay rate 
\begin{equation}
\snorm{\vvels(x)}
\leq C\bp{\snorm{x}\np{1+\wakefct\np{x}}}^{-1}
\label{eq:est.v.s3}
\end{equation}
for all $x\in\Omega$.
Invoking now~\eqref{eq:est.v.p3} and~\eqref{eq:est.v.s3},
we obtain
\[
\snorm{\vvels\otimes\vvels+\proj\np{\vvelp\otimes\vvelp}} (x)
\leq C\bp{\bp{\snorm{x}\np{1+\wakefct\np{x}}}^{-2}+\snorm{x}^{-4}}
\leq C\bp{\snorm{x}\np{1+\wakefct\np{x}}}^{-2}
\]
and 
\[
\snorm{\vvels\otimes\vvelp+\vvelp\otimes\vvels+\projcompl\np{\vvelp\otimes\vvelp}}(t,x)
\leq C\bp{\snorm{x}^{-3}\np{1+\wakefct\np{x}}^{-1}+\snorm{x}^{-4}}
\leq C\snorm{x}^{-3}.
\]
Since $\vvel$ is a solution to the linear problem~\eqref{periodicproblemlin}
together with $\mathds F=\vvel\otimes\vvel$,
the asymptotic expansion~\eqref{eq:asexp.vel}
with
the asserted remainder estimates~\eqref{eq:rem.vs} and~\eqref{eq:rem.vp}
now follows from Theorem~\ref{thm:asymp.lin} and Theorem~\ref{thm:asymp.linF}.

Now we turn to the asymptotic expansion of $\nabla \vvel$
and derive estimates of $\nabla\wvel$.
From the formulas~\eqref{eq:repr.w.s}
and~\eqref{eq:repr.w.p}
we deduce
\begin{align}
\partial_j\wvels(x)
&=\partial_j\fsolvelss\ast\bb{\chi_\Omega\bp{\vvels\cdot\nabla\vvels+\proj\np{\vvelp\cdot\nabla\vvelp}}}(x),
\label{eq:repr.dw.s}
\\
\partial_j\wvelp(t,x)
&=\partial_j\fsolvelpp\ast\bb{\chi_\Omega\bp{\vvels\cdot\nabla\vvelp + \vvelp\cdot\nabla\vvels+\projcompl\np{\vvelp\cdot\nabla\vvelp}}}(t,x)
\label{eq:repr.dw.p}
\end{align}
for $j=1,2,3$. 
We use the integrability properties 
from Theorem~\ref{thm:regularity} 
and the estimates~\eqref{eq:est.v.s2} and~\eqref{eq:est.v.p3}
to proceed analogously to~\cite{Eiter21}
and to conclude the first estimate
\[
\snorm{\nabla\wvels(x)}
+\snorm{\nabla\wvelp(t,x)}
\leq C\snorm{x}^{-1},
\]
and thus
\[
\snorm{\nabla\vvels(x)}
+\snorm{\nabla\vvelp(t,x)}
\leq C\snorm{x}^{-1}
\]
in virtue of~\eqref{eq:est.u.s} and~\eqref{eq:est.u.p}.
Together with~\eqref{eq:est.v.s2} and~\eqref{eq:est.v.p3}, this estimate implies
\[
\begin{aligned}
\snorml{\vvels\cdot\nabla\vvels+\proj\np{\vvelp\cdot\nabla\vvelp}}(x)
&\leq C\bp{\snorm{x}^{-2}\np{1+\wakefct(x)}^{-1}+\snorm{x}^{-3}}
\\
&\leq C\snorm{x}^{-2}\np{1+\wakefct(x)}^{-1/2},
\\
\snorml{\vvels\cdot\nabla\vvelp+\vvelp\cdot\nabla\vvels+\projcompl\np{\vvelp\cdot\nabla\vvelp}}(t,x)
&\leq C\bp{\snorm{x}^{-2}\np{1+\wakefct(x)}^{-1}
+\snorm{x}^{-3}+\snorm{x}^{-3}}
\\
&\leq C\snorm{x}^{-2}.
\end{aligned}
\]
Returning now to~\eqref{eq:repr.dw.s} and~\eqref{eq:repr.dw.p},
we can use Lemma~\ref{lem:conv.fsolvelss} and Lemma~\ref{lem:conv.fsolvelpp}
to deduce
\[
\begin{aligned}
\snorm{\nabla\wvels(x)}
&\leq C\snorm{x}^{-5/4}\np{1+\wakefct(x)}^{-3/4},
\\
\snorm{\nabla\wvelp(t,x)}
&\leq C\snorm{x}^{-2},
\end{aligned}
\]
and thus
\[
\begin{aligned}
\snorm{\nabla\vvels(x)}
&\leq C\snorm{x}^{-5/4}\np{1+\wakefct(x)}^{-3/4},
\\
\snorm{\nabla\vvelp(t,x)}
&\leq C\snorm{x}^{-2},
\end{aligned}
\]
due to~\eqref{eq:est.u.s} and~\eqref{eq:est.u.p}.
Again, this gives an improved estimate on the nonlinear terms,
and repeating this argument once more,
we finally arrive at the (optimal) estimate
\begin{equation}
\begin{aligned}
\snorm{\nabla\vvels(x)}
&\leq C\snorm{x}^{-3/2}\np{1+\wakefct(x)}^{-3/2},
\\
\snorm{\nabla\vvelp(t,x)}
&\leq C\snorm{x}^{-3}.
\end{aligned}
\label{eq:est.dv}
\end{equation}
To derive estimates~\eqref{eq:rem.dvs} and~\eqref{eq:rem.dvp}
for the corresponding remainder terms,
we introduce another decomposition of $\mathcal R^\wvel$.
Let $\chi\in\CSci(\R^3)$  a smooth function
such that $\chi(x)=1$ for $\snorm{x}\leq R_\ast/2$
and $\chi(x)=0$ for $\snorm{x}\geq R_\ast$.
We decompose $\partial_j\mathcal R^\wvel=I+J$, $j=1,2,3$, with
\[
\begin{aligned}
I(t,x)
&\coloneqq\int_{\torus\times\Omega}
\chi(x-y)\,\partial_j\fsolvel(t-s,x-y) \nb{\vvel(s,y)\cdot\nabla\vvel(s,y)}\,\dd(s,y),
\\
J(t,x)
&\coloneqq\int_{\torus\times\Omega}
\nb{1-\chi(x-y)}\,\partial_j\fsolvel(t-s,x-y) \nb{\vvel(s,y)\cdot\nabla\vvel(s,y)}\,\dd(s,y)
\\
&\qquad\qquad\qquad\qquad
-\int_\torus\partial_j\fsolvel(t-s,x) \int_{\Sigma}\np{\vvel_b(s,y)\cdot n(y)}\vvel_b(s,y) \,\dd S(y)\dd s.
\end{aligned}
\]
By~\eqref{eq:est.v.s3},~\eqref{eq:est.v.p3} and~\eqref{eq:est.dv},
we have
\[
\begin{aligned}
\snorml{\vvels\cdot\nabla\vvels+\proj\np{\vvelp\cdot\nabla\vvelp}}(x)
&\leq C\snorm{x}^{-5/2}\np{1+\wakefct(x)}^{-5/2},
\\
\snorml{\vvels\cdot\nabla\vvelp+\vvelp\cdot\nabla\vvels+\projcompl\np{\vvelp\cdot\nabla\vvelp}}(t,x)
&\leq C\snorm{x}^{-7/2}\np{1+\wakefct(x)}^{-3/2}.
\end{aligned}
\]
Since $\partial_j\fsolvel\in\LSloc{1}(\torus\times\R^3)$ 
(see~\cite{G,EiterKyed18})
and $\chi\partial_j\fsolvel$ has compact support, 
Lemma~\ref{lem:conv.decay}
implies
\[
\begin{aligned}
\snorm{I_0(x)}
&\leq C\snorm{x}^{-5/2}\np{1+\wakefct(x)}^{-5/2},
\\
\snorm{I_\perp(t,x)}
&\leq C\snorm{x}^{-7/2}\np{1+\wakefct(x)}^{-3/2}
\end{aligned}
\]
for $\snorm{x}$ sufficiently large.
For $J_0$ we use 
$\vvel\cdot\nabla\vvel=\nabla \cdot (\vvel\otimes\vvel)$ and integration by parts
to obtain the decomposition $J_0=J_0^1+J_0^2+J_0^3$ with
\[
\begin{aligned}
\snorm{J_0^1(x)}
&\leq \int_{\Omega}
\nb{1-\chi(x-y)}\,\snorml{\partial_j \nabla\fsolvelss(x-y)} \,
\snorml{\vvels(y)\otimes\vvels(y)+\proj\np{\vvelp\otimes\vvelp}(y)}\,\dd y,
\\
\snorm{J_0^2(x)}
&\leq
\int_{\Omega}
\snorm{\nabla\chi(x-y)}\,\snorml{\partial_j\fsolvelss(x-y)} \,
\snorml{\vvels(y)\otimes\vvels(y)+\proj\np{\vvelp\otimes\vvelp}(y)}\,\dd y,
\\
\snorm{J_0^3(x)}
&=
\snormL{\int_{\Sigma}
\nb{1-\chi(x-y)}\,\partial_j\fsolvelss(x-y)\, 
\proj\np{\np{\vvel_b\cdot n}\vvel_b}(y)\,\dd y
\\
&\qquad\qquad\qquad\qquad\qquad\qquad
-\partial_j\fsolvelss(x) \int_{\Sigma}\proj\bp{\np{\vvel_b\cdot n}\vvel_b}(y) \,\dd S(y)}.
\end{aligned}
\]
Now consider $\snorm{x}\geq 2R_\ast$. 
For $\snorm{y}\leq R_\ast$,
we then have $\snorm{x-y}\geq\snorm{x}-\snorm{y}\geq \snorm{x}/2\geq R_\ast$.
Using Lemma~\ref{lem:conv.decay}, we deduce
\[
\snorml{J_0^3(x)}
\leq \bp{\snorm{x}\np{1+\wakefct\np{x}}}^{-2}
\]
for $\snorm{x}$ sufficiently large.
Similarly, for $J_0^2$ we deduce
\[
\begin{aligned}
\snorm{J_0^2(x)}
&\leq C\int_{B_{R_\ast}(x)\setminus B_{R_\ast/2}(x)}
\snorml{\partial_j\fsolvelss(x-y)} \,
\bp{\snorm{y}\np{1+\wakefct\np{y}}}^{-2}\,\dd y
\leq C\bp{\snorm{x}\np{1+\wakefct\np{x}}}^{-2}
\end{aligned}
\]
for large $\snorm{x}$
since $\partial_j\fsolvelss(x-y)\in\LSloc{1}(\R^3)$.
To estimate $J_0^1$, we use~\eqref{eq:decay.fsolvelss} 
and the decay estimates~\eqref{eq:est.v.s3} and~\eqref{eq:est.v.p3}
to conclude
\[
\begin{aligned}
\snorm{J_0^1(x)}
&\leq C\int_{\Omega\setminus B_{R_\ast/2}(x)}
\bp{\snorm{x-y}\np{1+\wakefct\np{x-y}}}^{-2}
\bp{\snorm{y}\np{1+\wakefct\np{y}}}^{-2}\,\dd y
\\
&\leq C \bp{\snorm{x}\np{1+\wakefct\np{x}}}^{-2}
\log_+\Bp{\frac{\snorm{x}}{1+\wakefct(x)}}
\end{aligned}
\]
for $\snorm{x}\geq 2R_\ast$,
where we used~\cite[Lemma 3.5]{Eiter21}.
Collecting all estimates
for $\partial_j\mathcal R^{\wvel}_0=I_0+J_0^1+J_0^2+J_0^3$
and combining them with~\eqref{eq:est.remainder.u},
we obtain~\eqref{eq:rem.dvs}.
Finally, we estimate $J_\perp$ directly,
which yields
\[
\begin{aligned}
\snorm{J_\perp(t,x)}
&\leq C\bp{\int_{\Omega\setminus B_1(x)}\snorm{x-y}^{-4}\snorm{y}^{-7/2}\np{1+\wakefct(y)}^{-3/2}\,\dd y
+\snorm{x}^{-4}}
\\
&\leq C\snorm{x}^{-7/2}\np{1+\wakefct(x)}^{-1/2}
\end{aligned}
\]
for $\snorm{x}\geq 2R_\ast$
due to~\eqref{eq:decay.fsolvelpp}.
Together with the estimate of $I_\perp$ and~\eqref{eq:est.remainder.u},
this gives~\eqref{eq:rem.dvp}.

The results for the pressure, which can be decomposed into a component associated with $v_b$ and $f$ and another one associated with ${\mathds F} = v \otimes v$, are now obtained by combining  \eqref{eq:asexp.p}--\eqref{eq:remainder.p} and \eqref{eq:asexp.pF}--\eqref{eq:remainder.pF}.
\end{proof}

The statements of Corollary~\ref{cor:asymp.nonlin.split} and Corollary~\ref{cor:decay.general}
are now direct consequences.

\begin{proof}[Proof of Corollary~\ref{cor:asymp.nonlin.split}]
From the identity $\fsolvel=1_\torus\otimes\fsolvelss+\fsolvelpp$ 
we directly conclude~\eqref{eq:asexp.velp} from~\eqref{eq:asexp.vel}
by taking the purely periodic part.
Taking the steady-state part, 
we nearly arrive at a similar formula,
which only differs from~\eqref{eq:asexp.vels}
by the term $\Psi_0\cdot \nabla P(x)$.
Since $\snorm{\nabla P(x)}\leq C\snorm{x}^{-3}$,
this term can be absorbed into the steady-state remainder $\calR_0$.
\end{proof}

\begin{proof}[Proof of Corollary~\ref{cor:decay.general}]
The asserted estimates were already shown in the process of proving Theorem~\ref{thm:repr.nonlin}.
However, they are also direct consequences of the asymptotic expansions~\eqref{eq:asexp.vels},~\eqref{eq:asexp.velp} and~\eqref{eq:asexp.pres}
combined with the decay properties of the fundamental solutions from~\eqref{eq:decay.fsolvelss},~\eqref{eq:decay.fsolvelpp} 
and~\eqref{eq:decay.fsolpres}, respectively.
\end{proof}

Finally, we consider the case of constant total flux. 

\begin{proof}[Proof of Theorem~\ref{thm:constflux}]
First of all, note that $\partial_t\Phi=0$ is equivalent to $\Phi\equiv\Phi_0$, and thus to $\Phi_\perp=0$.
Therefore, the asymptotic expansions from Theorem~\ref{thm:asymp.nonlin}
and Corollary~\ref{cor:asymp.nonlin.split} 
simplify, and we obtain~\eqref{eq:asexp.vel.constflux} and~\eqref{eq:asexp.pres.constflux}.
Moreover, these formulas imply the improved pointwise estimates~\eqref{eq:decay.constflux.vs}--\eqref{eq:decay.constflux.p}
in virtue of the decay properties of the fundamental solutions from~\eqref{eq:decay.fsolvelpp} and~\eqref{eq:decay.fsolpres}.
In particular, this yields
\[
\snorm{\vvels\otimes\vvelp+\vvelp\otimes\vvels+\projcompl\np{\vvelp\otimes\vvelp}}(t,x)
\leq C\bp{\snorm{x}^{-4}\np{1+\wakefct\np{x}}^{-1}+\snorm{x}^{-6}}
\leq C\snorm{x}^{-4},
\]
and we can conclude 
the remainder estimate~\eqref{eq:rem.vp.constflux}
for $\mathcal R_\perp$ 
from a combination of Theorem~\ref{thm:asymp.lin}
and Theorem~\ref{thm:asymp.linF}
as above.
Moreover, we now have
\[
\snorml{\vvels\cdot\nabla\vvelp+\vvelp\cdot\nabla\vvels+\projcompl\np{\vvelp\cdot\nabla\vvelp}}(t,x)
\leq C\snorm{x}^{-9/2}\np{1+\wakefct(x)}^{-3/2},
\]
so that, similarly to above, 
\[
\begin{aligned}
&\snormL{\int_{\torus\times\Omega}
\nb{1-\chi(x-y)}\,\partial_j\fsolvelpp(t-s,x-y) \projcompl\nb{\vvel(s,y)\cdot\nabla\vvel(s,y)}\,\dd(s,y)}
\\
&\qquad
\leq\int_{\Omega\setminus B_2(x)}\snorm{x-y}^{-4}\snorm{y}^{-9/2}\np{1+\wakefct(y)}^{-3/2}\,\dd y
\leq \snorm{x}^{-9/2}\np{1+\wakefct(x)}^{-3/2}
\end{aligned}
\]
for $\snorm{x}\geq 2R_\ast$.
Therefore,
the estimate of $J_\perp$
in the proof of Theorem~\ref{thm:asymp.nonlin}
can be replaced with
\[
\begin{aligned}
\snorm{J_\perp(t,x)}
\leq C\bp{\snorm{x}^{-9/2}\np{1+\wakefct(x)}^{-3/2}
+\snorm{x}^{-4}}
\leq C\snorm{x}^{-4},
\end{aligned}
\]
and in virtue of~\eqref{eq:est.remainder.u} and the previous estimate of $I_\perp$,
we obtain~\eqref{eq:rem.dvp}.

To derive the asymptotic expansion~\eqref{eq:asexp.dvp.constflux},
observe that 
$\mathcal R_\perp^{\partial_j}=\partial_j\mathcal R_\perp^\uvel-\partial_j\wvelp$.
Due to
\[
\begin{aligned}
\partial_j\wvel(t,x)
&=
I(t,x)
+\int_{\torus\times\Omega}
\nb{1-\chi(x-y)}\,\partial_j\fsolvel(t-s,x-y) \nb{\vvel(s,y)\cdot\nabla\vvel(s,y)}\,\dd(s,y),
\end{aligned}
\]
the previous integral estimate and the estimate of $I_\perp$
yield
\[
\snorml{\mathcal R_\perp^{\partial_j}(t,x)}
\leq \snorml{\partial_j\mathcal R_\perp^\uvel(t,x)}+\snorm{\partial_j\wvelp(t,x)}
\leq \snorm{x}^{-9/2}\np{1+\wakefct(x)}^{-3/2}
\]
by~\eqref{eq:est.remainder.u},
which completes the proof of Theorem~\ref{thm:constflux}.
\end{proof}

\end{document}